\documentclass[11pt]{article}
\textwidth=5.3in \textheight=8.5in
\usepackage{latexsym,amsmath,amsfonts,hyper,setspace}
\onehalfspacing




\def\nbw{section}

\numberwithin{equation}{\nbw}

\newtheorem{theorem}{Theorem}
\numberwithin{theorem}{\nbw}
\newtheorem{proposition}{Proposition}
\numberwithin{proposition}{\nbw}
\newtheorem{corollary}{Corollary}
\numberwithin{corollary}{\nbw}
\newtheorem{lemma}{Lemma}
\numberwithin{lemma}{\nbw}
\newtheorem{question}{Question}
\numberwithin{question}{\nbw}

\numberwithin{conjecture}{\nbw}

\newtheorem{assumption}{Assumption}
\numberwithin{assumption}{\nbw}

\numberwithin{definition}{\nbw}

\numberwithin{notation}{\nbw}

\numberwithin{condition}{\nbw}

\numberwithin{example}{\nbw}

\newtheorem{claim}{Claim}
\numberwithin{claim}{\nbw}

\newtheorem{remark}{Remark}
\numberwithin{remark}{\nbw}

\numberwithin{question}{\nbw}

\numberwithin{goal}{\nbw}

\numberwithin{fact}{\nbw}



\newcommand{\thmref}[1]{Theorem~\ref{thm:#1}} 
\newcommand{\lemref}[1]{Lemma~\ref{lem:#1}} 
\newcommand{\propref}[1]{Proposition~\ref{prop:#1}} 
\newcommand{\remref}[1]{Remark~\ref{rem:#1}} 
\newcommand{\corref}[1]{Corollary~\ref{cor:#1}} 
\newcommand{\assumref}[1]{Assumption~\ref{assum:#1}} 
\newcommand{\secref}[1]{Section~\ref{sec:#1}} 
\newcommand{\eqnref}[1]{(\ref{eq:#1})} 

\def\be{\begin{equation} }
\def\ee{ \end{equation}}

\def\ben{\begin{equation*}}
\def\een{\end{equation*}}
\def\bea{\begin{eqnarray}}
\def\eea{\end{eqnarray}}
\def\ee{\end{eqnarray}}
\def\bean{\begin{eqnarray*}}
\def\eean{\end{eqnarray*}}



\newcommand\ignore[1]{}



%


\def\R{\mathbb{R}} 
\def\C{\mathbb{C}} 
\def\N{\mathbb{N}} 







\newcommand{\Ex}[1]{\mathbb{E}\left[#1\right]} 
\newcommand{\Prwo}{\mathbb{P}} 


\newcommand{\Ind}[1]{\chi_{#1}} 
\newcommand{\Var}[1]{\mathbb{V}\left(#1\right)} 
\renewcommand{\Pr}[1]{\mathbb{P}\left(#1\right)} 










\newcommand{\bigoh}[1]{O\left(#1\right)}
\newcommand{\liloh}[1]{o\left(#1\right)}
\newcommand{\ohmega}[1]{\Omega\left(#1\right)}
\newcommand{\theita}[1]{\Theta\left(#1\right)}



\def\sC{\mathcal{C}}
\def\sE{\mathcal{E}}\def\sF{\mathcal{F}}
\def\sG{\mathcal{G}}\def\sH{\mathcal{H}}
\def\sL{\mathcal{L}}
\def\sM{\mathcal{M}}


\def\fG{{\sf G}}


\def\deg{{\rm deg}}


\newcommand\QED{\ifhmode\allowbreak\else\nobreak\fi
\quad\nobreak$\Box$\medbreak}
\newcommand{\proofstart}{\par\noindent\sl Proof:\rm\enspace}
\newcommand{\proofend}{\QED\par}
\newenvironment{proof}{\proofstart}{\proofend}


\def\eps{\epsilon}






\def\deg{\indeg}

\renewcommand{\deg}{\mbox{d}}

\addtolength{\oddsidemargin}{-.5in}
\addtolength{\evensidemargin}{-.5in}
\addtolength{\textwidth}{1.0in}

\def\tr{{\rm Tr}}

\def\avgA{A_{\bf p}^{{\rm typ}}}
\def\typER{A^{\rm typ}_{m}}
\def\typA{A_{\bf p}^{{\rm typ}}}
\def\typG{\fG_{\bf p}^{{\rm typ}}}
\def\typL{\sL_{\bf p}^{{\rm typ}}}
\def\Ap{A_{\bf p}}
\def\Gp{\fG_{\bf p}}

\def\sLp{\sL_{\bf p}}

\def\Id{I}

\def\cb{{\bf e}}

\begin{document}

\title{Concentration of the adjacency matrix and of the Laplacian in random graphs with independent edges}
\author{Roberto Imbuzeiro Oliveira\thanks{\texttt{rob.oliv@gmail.com} IMPA, Rio de Janeiro, RJ, Brazil, 22430-040. Work supported by a {\em Bolsa de Produtividade em Pesquisa} and a {\em Projeto Universal} from CNPq, Brazil.}} \maketitle
\begin{abstract}Consider any random graph model where potential edges appear independently, with possibly different probabilities, and assume that the minimum expected degree is $\omega(\ln n)$. We prove that the adjacency matrix and the Laplacian of that random graph are concentrated around the corresponding matrices of the weighted graph whose edge weights are the probabilities in the random model.

We apply this result to two different settings. In bond percolation,
we show that, whenever the minimum expected degree in the random
model is not too small, the Laplacian of the percolated graph is
typically close to that of the original graph. As a corollary, we
improve upon a bound for the spectral gap of the percolated graph
due to Chung and Horn.

We also consider {\em inhomogeneous random graphs} with average
degree $\gg \ln n$. In this case we show that the adjacency matrix
of the random graph can be approximated (in a suitable sense) by an
integral operator defined in terms of the attachment kernel
$\kappa$.

Our main proof tool might be of independent interest: a new
concentration inequality for {\em matrix martingales} that
generalizes Freedman's inequality for the standard scalar
setting.\end{abstract}

\section{Introduction}

Much of probabilistic combinatorics deals with questions of the
following type:

\begin{question}\label{question:main}Given a probability distribution over ``large" combinatorial
objects $X$ and a real-valued parameter $P=P(X)$ defined over such
objects, does there exists a typical value $P^{\rm typ}$ such that
$P(X)$ is very likely to be close to $P^{\rm typ}$?\end{question}

Starting with the seminal work of Shamir and Spencer
\cite{ShamirSpencer_ChromaticNumber} on the chromatic number of
$G_{n,p}$, many answers to instances of the above question have been
obtained via {\em concentration inequalities}, and developments in
the two fields have often gone hand in hand; see
\cite{Ledoux_ConcentrationBook,AlonSpencer_ProbMethod} and the
references therein for many examples.

In this paper we introduce a new concentration inequality for {\em
random Hermitian matrices} in order to address a variant of Question
\ref{question:main}. Our combinatorial objects consist of {\em
random graphs with independent edges}. These are random graphs where
the events ``$ij$ is an edge" (with $ij$ varying over all unordered
pairs of vertices) are independent, but not necessarily identically
distributed. The new twist is that the ``parameters" for which we
prove concentration are the {\em adjacency matrix} and the {\em
graph Laplacian} of the resulting graph (defined in
\secref{prelimgraph}).

We briefly recall why these two matrices are important. Many
(real-valued) parameters of a graph can be computed and/or estimated
from these two matrices, including the diameter, distances between
distinct subsets, discrepancy-like properties, path congestion,
chromatic number and the mixing time for random walk; see e.g.
\cite{Chung_SpectralGraphTheory} for a compendium of these results,
\cite{KrivelevichSudakov_Pseudorandom,ChungGraham_QuasirandomGraphs,ChungGraham_QuasirandomGraphsGivenDegrees,ChungGrahamWilson_QuasirandomGraphs}
for the relationship between the two matrices and ``pseudo-random"
properties of graphs and
\cite{AlonKahale_3Coloring,FeigeOfek_Spectral,CojaLanka_PlantedPartition}
for algorithmic applications. Given these facts, our main Theorem
(stated below) sheds some light on the typical properties of the
corresponding random graph models.

\begin{theorem}[Loosely stated]\label{thm:mainintro} Let $\Gp$ be a random graph on vertex set $[n]$ where each potential edge $ij$, $1\leq i\leq j\leq n$ appears with probability ${\bf p}(i,j)$. Let $\Ap$ and $\sLp$ be the adjacency
matrix and graph Laplacian of $\Gp$ and $\typA$ and $\typL$ be the
adjacency matrix and Laplacian of the weighted graph $\typG$ where
$ij$ has weight ${\bf p}(i,j)$ for each pair $ij$. Define $d$,
$\Delta$ as the minimum and maximal weighted degrees in $\typG$.
Then there exists a universal constant $C>0$ such that if
$\Delta\geq C\ln n$,
$$\|\Ap - \typA\| = \bigoh{\sqrt{\Delta\ln n}}\mbox{ with high probability}$$
and, if $d\geq C\ln n$,
$$\|\sLp - \typL\| = \bigoh{\sqrt{\frac{\ln n}{d}}}\mbox{ with high
probability.}$$\end{theorem}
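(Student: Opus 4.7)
The plan is to prove the adjacency bound with one application of a matrix Bernstein inequality (the independent-sum specialization of the paper's matrix Freedman bound), and then to reduce the Laplacian bound to a second matrix Bernstein inequality together with a scalar concentration of the degree sequence; the critical ingredient will be the a priori norm bound $\|D^{-1/2}AD^{-1/2}\|\le 1$ for the normalized adjacency matrix. For the adjacency matrix, let $X_{ij}$ denote the indicator of the edge $ij$ in $\Gp$ and write
\[
\Ap - \typA = \sum_{1\le i<j\le n}(X_{ij}-{\bf p}(i,j))\,E_{ij},\qquad E_{ij}:=\cb_i\cb_j^T+\cb_j\cb_i^T.
\]
The summands are independent, centered, Hermitian, and of spectral norm at most $1$. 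Since $E_{ij}^2=\cb_i\cb_i^T+\cb_j\cb_j^T$, the matrix variance is diagonal with $(i,i)$-entry bounded by the expected degree $\bar d_i:=\sum_j{\bf p}(i,j)\le \Delta$, hence has operator norm $\le \Delta$. Matrix Bernstein then gives $\|\Ap-\typA\|=\bigoh{\sqrt{\Delta\ln n}+\ln n}$, and the hypothesis $\Delta\ge C\ln n$ absorbs the additive term.

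For the Laplacian, write $\sL=I-D^{-1/2}AD^{-1/2}$ and let $D_{\bf p}$, $D_{\rm typ}$ be the diagonal matrices of actual and expected degrees. Insert the intermediary $M:=D_{\rm typ}^{-1/2}\Ap D_{\rm typ}^{-1/2}$ and split
\[
\typL-\sLp = (M-D_{\rm typ}^{-1/2}\typA D_{\rm typ}^{-1/2}) + (D_{\bf p}^{-1/2}\Ap D_{\bf p}^{-1/2}-M).
\]
The first bracket equals $\sum_{i<j}(X_{ij}-{\bf p}(i,j))E_{ij}/\sqrt{\bar d_i\bar d_j}$, whose summands have spectral norm $\le 1/d$ and whose diagonal variance-proxy entries are $\le\sum_j {\bf p}(i,j)/(\bar d_i\bar d_j)\le 1/d$. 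Matrix Bernstein yields a contribution of $\bigoh{\sqrt{\ln n/d}}$.

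For the second bracket, set $P:=D_{\bf p}^{-1/2}\Ap D_{\bf p}^{-1/2}$ and $Q^{-1}:=D_{\bf p}^{1/2}D_{\rm typ}^{-1/2}$, so that $M=Q^{-1}PQ^{-1}$ and
\[
P-Q^{-1}PQ^{-1} = (I-Q^{-1})P + Q^{-1}P(I-Q^{-1}).
\]
Here $\|P\|\le 1$ by the classical a priori bound on the normalized adjacency matrix of any graph with positive degrees. Scalar Bernstein applied to $d_i=\sum_j X_{ij}$, combined with a union bound over $i\in[n]$, gives $|d_i-\bar d_i|=\bigoh{\sqrt{\bar d_i\ln n}}$ uniformly w.h.p., hence $\|I-Q^{-1}\|=\max_i|1-\sqrt{d_i/\bar d_i}|=\bigoh{\sqrt{\ln n/d}}$ and $\|Q^{-1}\|=\bigoh{1}$, so this bracket is also $\bigoh{\sqrt{\ln n/d}}$.

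The main obstacle is the second bracket: a direct estimate of the form $\|D_{\bf p}^{-1/2}-D_{\rm typ}^{-1/2}\|\cdot\|\Ap\|\cdot\|D_{\bf p}^{-1/2}\|$ would carry a factor of $\|\Ap\|=\bigoh{\Delta}$ and yield only $\bigoh{\Delta\sqrt{\ln n}/d^{3/2}}$, which is weaker than the target whenever $\Delta\gg d$. The rearrangement above avoids this by placing the $O(1)$-norm object $P$ in the middle rather than the bulky $\Ap$; careful bookkeeping of the scalar degree-concentration constants is then what remains to make the final bound truly $\bigoh{\sqrt{\ln n/d}}$.
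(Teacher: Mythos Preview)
Your proposal is correct and follows essentially the same approach as the paper: both proofs apply the independent-sum matrix Bernstein/Freedman inequality directly to $\Ap-\typA$ with the same variance computation, and both handle the Laplacian by inserting the intermediary $D_{\rm typ}^{-1/2}\Ap D_{\rm typ}^{-1/2}$, bounding the first bracket by a second matrix Bernstein application and the second bracket by scalar degree concentration combined with the a priori bound $\|D_{\bf p}^{-1/2}\Ap D_{\bf p}^{-1/2}\|=\|I-\sLp\|\le 1$. Your algebraic identity $P-Q^{-1}PQ^{-1}=(I-Q^{-1})P+Q^{-1}P(I-Q^{-1})$ is simply a cleaner packaging of the paper's own computation of $\|\sM-\sLp\|$.
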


A more precise quantitative statement of \thmref{mainintro} is given
in \secref{typicalmatrices} below.

\thmref{mainintro} is related to several known results about the
standard Erd\"{o}s-R\'{e}nyi graph $G_{n,p}$ (the special case where
${\bf p}(i,j)=p$ for $i\neq j$). We will show in
\secref{quasirandom} that the kind of matrix concentration we prove
here is implicit in the literature and that the standard notion of
{\em quasi-randomness} for dense graphs
\cite{ChungGraham_QuasirandomGraphs,KrivelevichSudakov_Pseudorandom}
can be reformulated in terms of concentration of the adjacency
matrix around the ``typical matrix" for the corresponding $G_{n,p}$
model. There is also a relationship between concentration of the
Laplacian and quasi-randomness for given degree sequences
\cite{ChungGraham_QuasirandomGraphsGivenDegrees,ChungGraham_QuasirandomGraphsGivenDegrees,ChungGraham_QuasirandomGraphs}
which is briefly discussed in \secref{concentrationerdosrenyi}.

For the special cases just described, the bounds obtained from
\thmref{mainintro} for the Laplacian are qualitatively sharp, in the
sense that they becomes trivial at roughly the same point where one
cannot expect concentration to hold. However, more specialized (and
much more complex) approaches yield improved bounds
\cite{FeigeOfek_Spectral,CojaLanka_SpectrumGivenDegrees,FurediKomlos_Eigenvalues}.
In some sense, this is due to the fact that the typical adjacency
matrices and Laplacians for such random graph models turn out to be
very degenerate: one of the eigenvalues of each matrix has
multiplicity $n-1$, and the other eigenvalue is well separated from
the first.

The cases where this does {\em not} happen turn out to be more
interesting. For instance, consider the case of {\em bond
percolation} with a parameter $p\in(0,1)$ on an arbitrary $n$-vertex
graph $G$. That is, we consider a random subgraph $G_p$ of $G$ that
is obtained by retaining each edge of $G$ independently with
probability $p$. Let $A$ be the adjacency matrix and $\sL$ be the
Laplacian of $G$ (respectively). We will show that when the minimum
expected degree in $G_p$ is $\omega(\ln n)$, the adjacency matrix
and Laplacian of $G_p$ are close to $pA$ and $\sL$ (respectively);
therefore, any estimate for $G$ derived from $\sL$ continues to hold
(at least approximately) for the random subgraph. A simple corollary
of our Theorem is a bound for the spectral gap of $G_p$ that
improves upon a recent result of Chung and Horn
\cite{ChungHorn_PercolationSpectrum}, derived via much more
complicated methods.

We then turn to the general model of {\em inhomogeneous random
graphs}. These are built from a set of points $X_1,\dots,X_n$ that
are uniformly distributed over $[0,1]$. The probability that $i$ and
$j$ are connected in the random graph is $p\kappa(X_i,X_j)$, where
$\kappa:M\times M\to\R_+$ is a symmetric function (called a {\em
kernel}) and $p$ is a parameter that controls the density of the
resulting graph. Under some technical conditions, we will show that,
for $p=\omega(\ln n/n)$, the adjacency matrix of the random graph
will correspond to a kind of discretization of an integral operator
$T_\kappa$ defined in terms of $\kappa$. \thmref{mainintro} takes
care of the key step where we show that the adjacency matrix is
concentrated around a deterministic matrix; the rest of the argument
consists of proving that the latter matrix is an approximation of
$T_\kappa$ in some suitable sense. The end result implies that the
random graph and the kernel $\kappa$ are close in a metric that is stronger than the {\em cut metric} from the literature on graph limits
\cite{BorgsEtAl_ConvergentSequencesDenseGraphs,BorgsEtAl_ConvergentSequencesDenseGraphsII,LovaszSzegedy_GraphLimits,BollobasRiordan_MetricsSparseGraphs}.
Our results also imply that the eigenvalue distributions and the
eigenvectors of the adjacency matrix of the random graph model are
closely related to those of $T_\kappa$.

\subsection{A new concentration inequality}

The main result, \thmref{typicalmatrices}, is a straightforward
consequence of a new concentration inequality for {\em random
matrices}. Our result bounds the fluctuations $Z-\Ex{Z}$ of certain
random $d\times d$ Hermitian matrices $Z$ from their mean (defined
entrywise), as measured by {\em largest eigenvalue}
$\lambda_{\max}(Z-\Ex{Z})$ and the {\em spectral norm}
$\|Z-\Ex{Z}\|$.

Not much is known in general about such inequalities. This is in
sharp contrast with the scalar case, where there are several
remarkable inequalities and many techniques to prove them
\cite{Ledoux_ConcentrationBook,BoucheronEtAl_MomentBounds,AlonSpencer_ProbMethod}.
The concentration results for random matrices that have been proven
correspond to relatively old developments in the scalar case, such
as the standard bounds due to Chernoff
\cite{Chernoff_Bound,AhlswedeWinter_StrongConverse} and Hoeffding
\cite{Hoeffding_Bound,ChristofidesMarkstrom_HoeffdingForMatrices},
as well as Khintchine's inequality
\cite{LustPicardPisier_Khintchine,Rudelson_RandomIsotropic}.
Accordingly, the new concentration result we introduce in this paper
is a matrix analogue of {\em Freedman's inequality} for martingale
sequences \cite{Freedman_Ineq}, which dates back to the 1970's. Here
is a precise statement. [Measurability and conditional expectations
are defined entrywise; see \secref{prelimprob} for this and other
definitions.]

\begin{theorem}[Freedman's Inequality for Matrix Martingales]\label{thm:freedman}Let $$0=Z_0,Z_1,\dots,Z_n$$
be a sequence of random $d\times d$ Hermitian matrices that forms a
martingale sequence with respect to the filtration
$\sF_0,\sF_1,\dots,\sF_n$ (that is, for each $1\leq i\leq n$ $Z_i$
is $\sF_i$-measurable and $\Ex{Z_{i}\mid\sF_{i-1}} = Z_{i-1}$).
Suppose further than $\|Z_i-Z_{i-1}\|\leq M$ almost surely for each
$1\leq i\leq n$ and define:
$$W_n\equiv \sum_{i=1}^{n} \Ex{(Z_i-Z_{i-1})^2\mid\sF_{i-1}}.$$
Then for all $t,\sigma>0$:
$$\Pr{\lambda_{\max}(Z_n)\geq t,\, \lambda_{\max}(W_n)\leq \sigma^2} \leq d\,e^{-\frac{t^2}{8\sigma^2 +
4Mt}}.$$\end{theorem}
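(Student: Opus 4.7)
The strategy is to adapt the classical Laplace transform approach for scalar martingales to the matrix setting, via three ingredients: a conditional moment generating function (MGF) bound for bounded Hermitian increments, a supermartingale built from a trace exponential, and a Markov--Laplace optimization.

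First I would establish a matrix MGF bound: for any Hermitian random matrix $Y$ with $\|Y\|\le M$ almost surely and $\Ex{Y\mid \sF}=0$, and for $\theta$ in a suitable range $(0,c/M)$,
\[\Ex{e^{\theta Y}\mid \sF}\preceq \exp\bigl(g(\theta)\,\Ex{Y^{2}\mid \sF}\bigr)\]
in Loewner order, with $g(\theta)=(e^{\theta M}-1-\theta M)/M^{2}$. The scalar inequality $e^{\theta y}\le 1+\theta y+g(\theta)y^{2}$ on $|y|\le M$ lifts to $e^{\theta Y}\preceq I+\theta Y+g(\theta)Y^{2}$ via functional calculus; taking conditional expectations and using $I+X\preceq e^{X}$ for Hermitian $X$ finishes the step.

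Next, with $V_{k}:=\Ex{(Z_{k}-Z_{k-1})^{2}\mid\sF_{k-1}}$ (so $W_{k}=\sum_{i\le k}V_{i}$ is $\sF_{k-1}$-measurable) I would define
\[M_{k}:=\tr\exp\bigl(\theta Z_{k}-g(\theta)W_{k}\bigr)\]
and show it is a supermartingale. The key input is Lieb's concavity theorem: for any Hermitian $H$, the map $X\mapsto \tr\exp(H+\log X)$ is concave on positive definite $X$. Applied conditionally via Jensen, with $H=\theta Z_{k-1}-g(\theta)W_{k}$ (Hermitian and $\sF_{k-1}$-measurable) and $X=e^{\theta(Z_{k}-Z_{k-1})}$, and using the previous step together with operator monotonicity of $\log$ and monotonicity of $\tr\exp$ under Loewner order, one obtains $\Ex{M_{k}\mid\sF_{k-1}}\le \tr\exp(H+g(\theta)V_{k})=M_{k-1}$. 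In particular $\Ex{M_{n}}\le M_{0}=d$.

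Finally, on the event $\{\lambda_{\max}(W_{n})\le\sigma^{2}\}$ we have $-g(\theta)W_{n}\succeq -g(\theta)\sigma^{2}I$, so
\[M_{n}\ge e^{-g(\theta)\sigma^{2}}\tr e^{\theta Z_{n}}\ge \exp\bigl(\theta\lambda_{\max}(Z_{n})-g(\theta)\sigma^{2}\bigr).\]
Combining with Markov yields
\[\Pr{\lambda_{\max}(Z_{n})\ge t,\ \lambda_{\max}(W_{n})\le\sigma^{2}}\le d\,e^{g(\theta)\sigma^{2}-\theta t},\]
and optimizing $\theta\in(0,c/M)$, using the standard estimate $g(\theta)\le \theta^{2}/(2-CM\theta)$ for small $\theta M$, produces the stated $d\,e^{-t^{2}/(8\sigma^{2}+4Mt)}$ after absorbing constants.

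The main obstacle is the supermartingale step: the noncommutativity of $Z_{k-1}$, $Z_{k}-Z_{k-1}$, and $V_{k}$ means that naive iteration via Golden--Thompson would replace $\lambda_{\max}(W_{n})$ by the strictly larger $\sum_{k}\lambda_{\max}(V_{k})$, destroying the Freedman-type dependence on the predictable quadratic variation. Lieb's concavity (or an equivalent concavity statement for trace exponentials) is therefore essential to preserve the sharp variance proxy $W_{n}$.
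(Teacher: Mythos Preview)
Your proposal is correct and follows what is now the standard route (due to Tropp) via Lieb's concavity. The paper, however, takes a genuinely different and more elementary path using only Golden--Thompson, and in fact your final paragraph mis-diagnoses the obstacle.

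The paper's trick is to bundle the predictable variance correction \emph{with the increment} before applying Golden--Thompson. Writing $X_k=Z_k-Z_{k-1}$ and $\Delta_k=\Ex{X_k^2\mid\sF_{k-1}}$, one splits
\[
\tr\exp\bigl(sZ_k-2s^2W_k+C\bigr)\le \tr\Bigl(e^{\,sX_k-2s^2\Delta_k}\,e^{\,sZ_{k-1}-2s^2W_{k-1}+C}\Bigr)
\]
and then shows the single-step bound $\Ex{e^{\,sX_k-2s^2\Delta_k}\mid\sF_{k-1}}\preceq I$ for $s\in[0,1/2]$, using only the crude inequality $e^C\preceq I+C+C^2$ for $\|C\|\le 1$. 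Monotonicity of the trace then gives the recursion directly, with $W_{k-1}$ preserved intact. The point is that because $\Delta_k$ travels with $X_k$ in the \emph{same} exponential factor, Golden--Thompson never has to separate the variance terms from one another, so the loss you anticipate (replacing $\lambda_{\max}(W_n)$ by $\sum_k\lambda_{\max}(\Delta_k)$) simply does not occur.

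So Lieb's concavity is not essential here. What each approach buys: your route gives sharper constants in the exponent (since $g(\theta)$ is tighter than the $2s^2$ coming from $e^C\preceq I+C+C^2$) and generalizes more readily; the paper's route is more elementary, needing no operator-concavity input beyond Golden--Thompson, at the price of the specific constants $8$ and $4$ in the final bound.
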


Compared with Freedman's original bound, \thmref{freedman} has worse
constants in the exponent and an extra $d$ factor (which is
necessary; cf. \secref{final}), but the two bounds are otherwise of
the same form. In this paper we only need a version of
\thmref{freedman} for independent sums (cf. \remref{indepcase} and
\corref{freedman}), but the martingale inequality is not any harder
to prove.

The proof of \thmref{freedman} follows a methodology first proposed
 by Ahlswede and Winter \cite{AhlswedeWinter_StrongConverse}. These authors proved a version of the Chernoff bound for matrices which has had a very strong impact on the development of Quantum
Information Theory
\cite{Devetak_Wiretap,DevetakWinter_Trilogy,Winter_ConvexDecompPOVM}. Christofides and Markstr\"{o}m
\cite{ChristofidesMarkstrom_HoeffdingForMatrices} used the
same method to obtain a version of Hoeffding's inequality
for matrix martingales.

\begin{theorem}[\cite{ChristofidesMarkstrom_HoeffdingForMatrices}, in abridged form]\label{thm:hoeffding}In the setting of \thmref{freedman}, replace the assumption on $\|Z_i-Z_{i-1}\|$ by the assumption that there exist $0\leq r_i\leq 1$ such that $\lambda_{\max}(Z_i-Z_{i-1})\leq 1-r_i$ and
$\lambda_{\max}(Z_{i-1}-Z_{i})\leq r_i$. Then for all $t>0$,
$$\Pr{\lambda_{\max}(Z_n)\geq t} \leq d\,e^{-nH_{R/n}\left(\frac{R+t}{n}\right)}$$
where $R = \sum_{i=1}^n r_i/n$ and for $x,r\in[0,1]$
$$H_r(x) \equiv x \ln\left(\frac{x}{p}\right) + (1 - x)
\ln\left(\frac{1-x}{1-p}\right).$$\end{theorem}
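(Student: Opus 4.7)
The plan is to follow the Ahlswede--Winter exponential moment method referenced in the text, essentially mirroring the scalar Chernoff derivation for sums of bounded random variables but replacing the key moment-generating-function step by an operator inequality. As usual, I would start from the trace-Markov bound
\begin{equation*}
\Pr{\lambda_{\max}(Z_n) \geq t} \leq e^{-\lambda t}\,\Ex{e^{\lambda \lambda_{\max}(Z_n)}} \leq e^{-\lambda t}\,\Ex{\tr e^{\lambda Z_n}},
\end{equation*}
valid for any $\lambda > 0$, where the last step uses $e^{\lambda \lambda_{\max}(Z_n)} = \lambda_{\max}(e^{\lambda Z_n}) \leq \tr e^{\lambda Z_n}$. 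The whole problem reduces to controlling $\Ex{\tr e^{\lambda Z_n}}$.

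To handle the exponential of a sum, I would peel off one martingale increment at a time using Golden--Thompson, $\tr e^{\lambda Z_n} \leq \tr\bigl(e^{\lambda Z_{n-1}}\,e^{\lambda(Z_n - Z_{n-1})}\bigr)$, and condition on $\sF_{n-1}$. The key technical step is then the operator-inequality bound
\begin{equation*}
\Ex{e^{\lambda(Z_i - Z_{i-1})}\mid \sF_{i-1}} \preceq \phi(\lambda, r_i)\,I, \qquad \phi(\lambda,r) := (1-r)e^{-\lambda r} + r\,e^{\lambda(1-r)}.
\end{equation*}
This follows from the hypothesis that the spectrum of $Y := Z_i - Z_{i-1}$ lies in the length-$1$ interval $[-r_i, 1-r_i]$: each eigenvalue $y$ in this interval admits the convex decomposition $y = (y+r_i)(1-r_i) + ((1-r_i)-y)(-r_i)$, so convexity of $s \mapsto e^{\lambda s}$ yields the scalar inequality $e^{\lambda y} \leq (y+r_i)e^{\lambda(1-r_i)} + ((1-r_i)-y)e^{-\lambda r_i}$; applying this spectrally and taking conditional expectations, the zero-mean condition $\Ex{Y \mid \sF_{i-1}} = 0$ collapses the right-hand side to $\phi(\lambda, r_i)\,I$. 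Because $e^{\lambda Z_{i-1}}$ is positive semidefinite and the bound is a scalar multiple of $I$, the trace identity $\tr(A\,cI) = c\,\tr A$ propagates the estimate to $\Ex{\tr e^{\lambda Z_i}\mid \sF_{i-1}} \leq \phi(\lambda, r_i)\,\tr e^{\lambda Z_{i-1}}$. Iterating down to $Z_0 = 0$ yields $\Ex{\tr e^{\lambda Z_n}} \leq d \prod_{i=1}^n \phi(\lambda, r_i)$.

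To convert this product into the stated entropy bound I would rewrite $\phi(\lambda, r) = e^{-\lambda r}\bigl(1 + r(e^\lambda - 1)\bigr)$ and apply Jensen's inequality to the concave map $r \mapsto \log(1 + r(e^\lambda -1))$, which gives $\prod_i \phi(\lambda, r_i) \leq \phi(\lambda, \bar r)^n$ with $\bar r = \tfrac{1}{n}\sum_i r_i$. The resulting estimate $\Pr{\lambda_{\max}(Z_n) \geq t} \leq d\,\phi(\lambda, \bar r)^n e^{-\lambda t}$ has precisely the shape of a Binomial$(n,\bar r)$ Chernoff bound centred at its mean; optimizing over $\lambda > 0$ is then the standard scalar calculation that produces the relative-entropy exponent $n\,H_{\bar r}\!\left(\bar r + t/n\right)$, matching the statement once one identifies $R = n\bar r$.

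The only step requiring genuinely non-scalar input is the matrix MGF bound. Its scalar analogue is a one-line convexity argument, and the reason it lifts cleanly to Hermitian $Y$ here (despite $s \mapsto e^{\lambda s}$ failing to be operator-convex for large $\lambda$) is that the two-sided eigenvalue assumption traps the spectrum of $Z_i - Z_{i-1}$ in a fixed length-$1$ interval, which lets one interpolate linearly between the endpoints and apply the inequality eigenvalue-by-eigenvalue. The remaining ingredients---Golden--Thompson, trace-Markov, Jensen, and the scalar Chernoff optimization---are exactly the Ahlswede--Winter pattern that the paper already relies on for \thmref{freedman}.
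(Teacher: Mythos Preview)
The paper does not actually prove this theorem: \thmref{hoeffding} is quoted from Christofides and Markstr\"{o}m \cite{ChristofidesMarkstrom_HoeffdingForMatrices} and stated ``in abridged form'' without proof, so there is no argument in the paper to compare against. That said, your sketch is a faithful reconstruction of the Ahlswede--Winter method that both the cited reference and the paper's own proof of \thmref{freedman} rely on. The trace--Markov step, the Golden--Thompson peeling, the operator bound $\Ex{e^{\lambda(Z_i-Z_{i-1})}\mid\sF_{i-1}}\preceq \phi(\lambda,r_i)I$ obtained by interpolating the convex exponential between the endpoints $-r_i$ and $1-r_i$, the Jensen step replacing $\prod_i\phi(\lambda,r_i)$ by $\phi(\lambda,\bar r)^n$, and the final Chernoff optimization are all correct and are exactly the ingredients of the Christofides--Markstr\"{o}m proof.

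One small remark: your identification $R=n\bar r=\sum_i r_i$ is the right normalization for the exponent $nH_{R/n}\bigl((R+t)/n\bigr)$ to come out correctly; the paper's displayed definition $R=\sum_i r_i/n$ appears to be a typo (as is the stray $p$ in the definition of $H_r$), and you have silently corrected it.
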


As we will see in \remref{comparisons}, this bound would not suffice
for our applications. Roughly speaking, our Theorem is better
because the variance term in our bound is the largest value of a sum
of matrices, not the sum of the largest eigenvalues. In that
respect, \thmref{freedman} is closer to an influential bound
obtained by Rudelson \cite{Rudelson_RandomIsotropic} via certain
inequalities from non-commutative probability
\cite{LustPicardPisier_Khintchine}. The Ahlswede-Winter approach we
adopt here has the advantage of requiring no such unfamiliar
tools.\footnote{There is now a proof of Rudelson's bound along the
lines of the Ahlswede-Winter method; see \cite{Eu_Rudelson} for
details and for further discussion on the difference between the
three bounds.}

\thmref{freedman} should also be contrasted with other ways for
controlling eigenvalues and eigenvectors of random matrices. One of
them is the ``trace method"
\cite{ChungHorn_PercolationSpectrum,ChungLuVu_SpectraGivenDegrees,FurediKomlos_Eigenvalues,Friedman_RelativelyRamanujan,Friedman_SecondEigenvalue}
which consists of analyzing traces of high powers of the matrices
under consideration. This method can be very sharp, but it is also
quite complex and we will see that we obtain better bounds in one
context (but not all contexts) where the trace method has been
applied. A more recent way of bounding eigenvalues and eigenvectors
is in some sense based on bounding ``discrepancies"
\cite{FeigeOfek_Spectral,CojaLanka_SpectrumGivenDegrees}. This is
better than our bound when the technique applies (see e.g. the
comments in \secref{concentrationerdosrenyi}), but our main
applications seem to be beyond the reach of this methodology.

Finally, we note that our result is not quite comparable
concentration bounds of Alon, Krivelevich and Vu
\cite{AlonKrivelevichVu_Concentration} for the largest eigenvalues
of a random symmetric matrix. Our bound is poorer than theirs when
applied to $k$-th largest eigenvalue for any fixed $k$, but their
bound quickly deteriorates when $k$ grows, whereas our result bounds
the maximal deviation of all eigenvalues simultaneously (cf.
\corref{typicaleigenvalues}), as well as the deviation of
eigenspaces (cf. \corref{typicaleigenvectors}). Moreover, their
result cannot be used to determine the typical value of each
eigenvalue.

\subsection{Organization}

The remainder of the paper is organized as follows. After the
preliminary \secref{firstprelim}, we prove the main concentration
result in \secref{typicalmatrices}. As a test case, we apply our
results to the Erd\"{o}s-R\'{e}nyi random graph \secref{quasirandom}
where the connection with quasi-randomness is also discussed. Bond
percolation is discussed in \secref{bondpercolation}. The more
complicated case of inhomogeneous random graphs is treated in
\secref{inhomogeneous}, where we also compare our results to what is
known about graph limits. The new concentration inequality is proven
in \secref{freedman}. Some final remarks are made in \secref{final}.
The Appendix contains two simple results on the perturbation theory
of compact operators for which we did not find adequate references.

\section{Preliminaries}\label{sec:firstprelim}

\subsection{Matrix notation}

The space of $d_r\times d_c$ matrices with real (resp. complex)
entries will be denoted by $\R^{d_r\times d_c}$ (resp.
$\C^{d_r\times d_c}$). Moreover, for $A\in \C^{d_r\times d_c}$,
$A^*\in\C^{d_c\times d_r}$ is the conjugate transpose of $A$. We
will identify $\R^d$ (resp. $\C^d$) with the space $\R^{d\times 1}$
(resp. $\C^{d\times 1}$) of column vectors, so that the inner
product of $v,w\in \R^d$ is $w^*v$. $\|\cdot\|$ denotes both the
Eucliean norm on $\R^d$ or $\R^d$ and the {\em spectral radius} norm
induced on $\C^{d\times d'}$:
$$\|A\|\equiv \sup_{v\in\C^d,\|v\|=1}\|Av\|, A\in\C^{d\times
d'}.$$

$\C^{d\times d}_{\rm Herm}$ is the space of $d\times d$ {\em
Hermitian matrices}, which are the $A\in\C^{d\times d}$ with
$A^*=A$. $\R^{d\times d}_{\rm Herm}$ is similarly defined; one could
of course speak of {\em symmetric} matrices in this case and use
$A^\dag$ instead of $A^*$, but we will keep notation consistent.

The spectral theorem implies that for any $A\in \C_{\rm
Herm}^{d\times d}$ there exist real numbers $\lambda_0(A)\leq
\lambda_1(A)\leq \dots\leq \lambda_{d-1}(A)$ and orthonormal vectors
$\psi_0,\dots,\psi_{d-1}$ (the eigenvalues and eigenvectors of $A$,
respectively) with:
$$A\equiv \sum_{i=0}^{d-1}\lambda_i(A)\,\psi_i\psi_i^*.$$
The {\em spectrum of $A$} is the set ${\rm spec}(A)$ of all
$\lambda_i(A)$. The above formula implies that for $A\in \C^{d\times
d}_{\rm Herm}$:
$$\|A\| = \max_{0\leq i\leq d-1}|\lambda_i(A)| =
\max_{v\in\C^d,\|v\|=1}v^*Av.$$ For $A\in\R^{d\times d}_{\rm Herm}$,
the eigenvectors of $A$ are all real and one only needs to maximize
over $v\in\R^d$ in the above formula to compute $\|A\|$.

We also note an equivalent statement of the spectral theorem as:
$$A = \sum_{\alpha\in {\rm spec}(A)}\,\alpha\,\Pi_\alpha,$$
where the $\{\Pi_\alpha\}_{\alpha\in {\rm spec}(A)}$ are projections
with orthogonal ranges and $\sum_{\alpha\in {\rm
spec}(A)}\Pi_\alpha=\Id_d$, the $d\times d$ identity matrix. The
{\em multiplicity} of $\alpha\in {\rm spec}(A)$ is the dimension of
the range of the corresponding $\Pi_\alpha$; this is equal to the
number of $0\leq i\leq d-1$ with $\lambda_i(A)=\alpha$.

\subsection{Integral operators on $L^2([0,1])$ and spectral theory}\label{sec:integralop}

In \secref{inhomogeneous} we will compare adjacency matrices with
certain integral operators on $L^2([0,1])$. The spectral theory of
these and other compact operators is a classical topic in Functional Analysis and we
refer to
\cite{RieszSzNagy_FunctionalAnalysis,Rudin_FunctionalAnalysis} for
all the results we review in this Section.

We will work with the space $L^2([0,1])$ of real measurable
functions that are square-integrable with respect to Lebesgue
measure. This space has a natural inner product
$$(f,g)_{L^2}\equiv \int_0^1\,f(x)\,g(x)\,dx\;\;(f,g\in L^2([0,1]))$$
and an associated norm $\|f\|_{L^2}^2\equiv (f,f)_{L^2}$ with
respect to which it is a real Hilbert space.

Given a function $\eta\in L^2([0,1]^2)$ (the latter space being
defined similarly to $L^2([0,1])$), one can define a linear operator
on $L^2([0,1])$ by the formula:
\begin{equation}\label{eq:integralop1}T_\eta:f(\cdot)\in
L^2([0,1])\mapsto (T_\eta f)(\cdot) \equiv
\int_0^1\eta(\cdot,y)\,f(y)\,dy.\end{equation}

The ``$L^2\to L^2$" norm of a linear operator $V$ from $L^2([0,1])$
to itself is given by:
$$\|V\|_{L^2\to L^2} \equiv \sup_{f\in
L^2([0,1])\backslash\{0\}}\frac{\|Vf\|_{L^2}}{\|f\|_{L^2}}.$$ It is
an exercise to show via the Cauchy Schwartz inequality that:
\begin{equation}\label{eq:normofTeta}\|T_\eta\|_{L^2\to L^2}^2\leq
\int_{[0,1]^2}\eta^2(x,y)\,dx\,dy.\end{equation} Moreover, if
$\eta':[0,1]^2\to\R$ is also square integrable, $T_\eta-T_{\eta'}$
equals $T_{\eta-\eta'}$.

Assume that $\eta(x,y)=\eta(y,x)$ for almost every $(x,y)\in[0,1]$
(i.e. $\eta$ is symmetric). In that case the operator $T_\eta$ is a
{\em compact, self adjoint} linear operator on the Hilbert space $L^2([0,1])$.

Let us recall what these properties imply. Let $T$ be a bounded,
compact, self-adjoint operator on the Hilbert space $\sH$. Then
there exists a finite or countable set $S\subset \R$ and a family
$\{P_\alpha\,:\, \alpha\in S\}$ of orthogonal projection operators
on $\sH$ with orthogonal ranges such that:
$$T =\sum_{\alpha \in S}\alpha\,P_\alpha\mbox{ and }{\rm Id}_{\sH}=\mbox{ identity operator on $\sH$}=\sum_{\alpha\in S}P_\alpha.$$
Moreover, either $S$ is finite and contains $0$, or $S$ is a
countable, bounded subset of $\R$ with $0$ as its only accumulation
point. Finally, all $P_\alpha$ for $\alpha\neq 0$ are finitely
dimensional; the {\em multiplicity} of $\alpha$ is precisely the
dimension of the range of $P_\alpha$. The spectrum of $T$ is the set
${\rm spec}(T_\eta)=S\cup\{0\}$.

\subsection{Concepts from Graph Theory}\label{sec:prelimgraph}
 For our purposes a graph
$G=(V,E)$ consists of a finite set $V$ of vertices and a set $E$ of
edges, which are subsets of size $1$ (loops) or $2$ of $V$ (we do
not allow for parallel edges). Unless otherwise noted, we will
assume that $V=[n]$ for some integer $n\geq 2$, where $[n]\equiv
\{1,2,\dots,n\}$. We will write edges as pairs $ij$ (allowing for
$i=j$), but we make no distinction between $ij$ and $ji$. We will
also write $i\sim_G j$ to mean that $ij\in E$. The {\em degree}
$\deg_G(i)$ of a vertex $i$ is the number of $1\leq j\leq n$ such
that $ij\in E$.

Assume that $V=[n]$. The {\em adjacency matrix} of $G$ is the
$n\times n$ matrix $A=A_G$ such that, for all $1\leq i,j\leq n$, the
$(i,j)$-th entry of $A$ is $1$ if $ij\in E$ and $0$ otherwise. The
{\em Laplacian} $\sL=\sL_G$ of $G$ is the matrix:
$$\sL_G = \Id_n - T_G\,A_G\,T_G$$
where $T$ is the $n\times n$ diagonal matrix whose $(i,i)$-th entry
is $\deg_G(i)^{-1/2}$ if $\deg_G(i)\neq 0$, or $0$ if $\deg_G(i)=0$.
We also let
$$\lambda(G)\equiv
\min\{\lambda_1(\sL),2-\lambda_{d-1}(\sL)\}$$ denote the {\em
spectral gap} of $G$.

We will also consider {\em weighted graphs}, which correspond to a
graph $H=(V',E')$ where a positive weight $w_e>0$ is assigned to
each edge $e\in E$. This is the same as defining a symmetric
function $w:(V')^2\to [0,+\infty)$ (i.e. $w(i,j)=w(j,i)\geq 0$ for
all $i,j\in V$) and setting $E'=\{\{i,j\}\,:\, w(i,j)>0\}$. In this
case, the {\em degree} of $i\in V'$ is defined as
$$\deg_H(i)\equiv \sum_{j=1}^nw(i,j).$$

Assume $V'=[m]$. The adjacency matrix of such an $H$ is the $m\times
m$ matrix $A_H$ where for each $1\leq i,j\leq m$ the $(i,j)$-th
entry of $A_H$ is $w(i,j)$. The Laplacian $\sL_H$ is defined as
$$\sL_H\equiv \Id_m - T_H A_H T_H,$$
where $T_H$ is defined as before, but with the new notion of degree.
The definition of $\lambda(H)$ is the same as for unweighted graphs.

\subsection{Probability with matrices}\label{sec:prelimprob}

We will be dealing with random Hermitian matrices throughout the
paper. Following common practice, we will always assume that we have
a probability space $(\Omega,\sF,\Prwo)$ in the background where all
random variables are defined.

Call a map $X:\Omega\to\C^{d\times d}_{\rm Herm}$ a random $d\times
d$ Hermitian matrix (or a $\C^{d\times d}_{\rm Herm}$-valued random
variable) if for each $1\leq i,j\leq n$, the function
$X(i,j):\Omega\to \C^{d\times d}$ corresponding to the $(i,j)$-th
entry of $X$ is $\sF$-measurable. We say that $X$ is {\em
integrable} if all of these entries are and let $\Ex{X}$ be the
matrix whose $(i,j)$-th entry is $\Ex{X(i,j)}$. Conditional
expectations with respect to a sub $\sigma$-field are also defined
entrywise.

If the entries are also square-integrable, one can define the
variance by the usual formula,
$$\Var{X} = \Ex{(X-\Ex{X})^2}.$$The standard identity $\Var{X} = \Ex{X^2}-\Ex{X}^2$
also holds in this setting.

We will need two easily checked properties of matrix (conditional)
expectations, valid for all integrable random $d\times d$ Hermitian
matrices $X$ and $Y$ and any sub-$\sigma$-field $\sG\subset\sF$:
\begin{eqnarray}\ignore{\label{eq:indep}\mbox{{\bf [Product rule]} If $X$ and $Y$ are
independent, }\Ex{XY} &=& \Ex{X}\Ex{Y}.\\}
\label{eq:traceexp}\mbox{\bf [$\tr$ and $\Ex{\dots}$ commute] }\tr(\Ex{X}) &=& \Ex{\tr(X)}.\\
\nonumber \mbox{{\bf [Conditioning]} If $Y$ is
$\sG$-measurable, } \Ex{XY\mid \sG} &=& \Ex{X\mid\sG}Y\\
\label{eq:product}\mbox{and }\Ex{XY}&=& \Ex{\Ex{X\mid\sG}Y}.\end{eqnarray}

\section{Concentration of graph matrices}\label{sec:typicalmatrices}

In this section we state and prove our main result,
\thmref{mainintro}.

Given $n\in\N\backslash\{0,1\}$, let ${\bf p}:[n]^2\to [0,1]$ be
symmetric: ${\bf p}(i,j)={\bf p}(j,i)$ for all $1\leq i,j\leq n$.
Define independent $0/1$ random variables $\{I_{ij}\,:\, 1\leq i\leq
j\leq n\}$ with
$$\Pr{I_{ij}=1}=1-\Pr{I_{ij}=0} = {\bf p}(i,j).$$
We also define $I_{ji}=I_{ij}$ for $j>i$.

Define a random unweighted graph $\Gp$ with vertex set $[n]$ and
edge set
$$E_{\bf p} \equiv \{ij\,:\, 1\leq i\leq j\leq n,\, I_{ij}=1\}.$$

Let $\Ap$ and $\sLp$ be the adjacency matrix and Laplacian of the
graph $\Gp$. We will compare these to the corresponding matrices
$\typA$, $\typL$ of the weighted graph $\typG$ defined by the
function ${\bf p}$.

The following is a more precise statement of \thmref{mainintro}.

\begin{theorem}[Existence of typical graph matrices]\label{thm:typicalmatrices}For any constant $c>0$ there exists another constant $C=C(c)>0$, independent of $n$ or ${\bf p}$, such that the following holds. Let $d\equiv\min_{i\in[n]}\deg_{\typG}(i)$, $\Delta\equiv\max_{i\in[n]}\deg_{\typG}(i)$. If $\Delta>C\ln n$, then for all $n^{-c}\leq \delta\leq 1/2$,
$$\Pr{\|\Ap - \avgA\|\leq 4\,\sqrt{\Delta\,\ln (n/\delta)}}\geq 1-\delta.$$
Moreover, if $d\geq C\ln n$, then for the same range of $\delta$:
$$\Pr{\|\sLp - \typL\|\leq 14\,\sqrt{\frac{\ln(4n/\delta)}{d}}}\geq 1-\delta.$$\end{theorem}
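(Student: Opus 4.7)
The plan is to write each matrix difference as a sum of independent mean-zero Hermitian matrices, indexed by the unordered pairs $ij$, and feed the result into the independent-sum form of \thmref{freedman}. For the adjacency bound,
$$\Ap-\typA \;=\; \sum_{1\le i<j\le n}(I_{ij}-\mathbf{p}(i,j))(\cb_i\cb_j^*+\cb_j\cb_i^*).$$
Each summand has spectral norm at most $1$ and squares to $\mathbf{p}(i,j)(1-\mathbf{p}(i,j))(\cb_i\cb_i^*+\cb_j\cb_j^*)$, so the sum of the expected squares is a diagonal matrix whose $(i,i)$ entry is bounded by $\deg_{\typG}(i)\le\Delta$. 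Plugging $\sigma^2=\Delta$, $M=1$, and $t=4\sqrt{\Delta\ln(2n/\delta)}$ into \thmref{freedman} and using $\Delta\ge C\ln n$ to make $4Mt$ negligible next to $8\sigma^2$, the tail is at most $n\exp(-\ln(2n/\delta))=\delta/2$, and a union bound over $\pm(\Ap-\typA)$ closes the first half.

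For the Laplacian, the obstacle is that $T_{\Gp}$ depends on every edge, which destroys independence. The plan is to decouple the \emph{edge noise} from the \emph{degree noise} using the algebraic identity
$$T_{\Gp}\Ap T_{\Gp} \;=\; S\,(T_{\typG}\Ap T_{\typG})\,S,\qquad S \;:=\; T_{\Gp}T_{\typG}^{-1}=\mathrm{diag}\!\left(\sqrt{\deg_{\typG}(i)/\deg_{\Gp}(i)}\right).$$
First, \thmref{freedman} is applied once more, this time to $X := T_{\typG}(\Ap-\typA)T_{\typG}$, which is again an independent sum; each summand has norm at most $1/d$ and the variance sum is a diagonal matrix whose $(i,i)$ entry equals $\sum_j\mathbf{p}(i,j)(1-\mathbf{p}(i,j))/(\deg_{\typG}(i)\deg_{\typG}(j))\le 1/d$, so Freedman yields $\|X\|\le C_2\sqrt{\ln(n/\delta)/d}$ on an event of probability $\ge 1-\delta/4$. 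Second, a scalar Bernstein bound applied to each $\deg_{\Gp}(i)=\sum_j I_{ij}$, followed by a union bound over $i\in[n]$, gives $|\deg_{\Gp}(i)-\deg_{\typG}(i)|\le C_1\sqrt{\deg_{\typG}(i)\ln(n/\delta)}$ uniformly in $i$ with probability $\ge 1-\delta/4$; since $d\ge C\ln n$, this forces $\|S-\Id\|\le C_1\sqrt{\ln(n/\delta)/d}$.

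Writing $M^*:=T_{\typG}\Ap T_{\typG}$ and expanding $SM^*S$ in powers of $S-\Id$ gives
$$\sLp-\typL \;=\; -X-\bigl((S-\Id)M^*+M^*(S-\Id)+(S-\Id)M^*(S-\Id)\bigr),$$
and $\|M^*\|\le\|\Id-\typL\|+\|X\|\le 1+\|X\|$ since $\Id-\typL$ has spectral radius at most $1$. The triangle inequality then yields
$$\|\sLp-\typL\| \;\le\; \|X\|+\bigl(2\|S-\Id\|+\|S-\Id\|^2\bigr)(1+\|X\|),$$
which collapses to at most $14\sqrt{\ln(4n/\delta)/d}$ after tracking constants and taking $C$ large. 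The main obstacle is precisely the choice of decomposition: a naive splitting via $T_{\Gp}-T_{\typG}$ would force a bound on $\|\Ap T_{\typG}\|$, which can be as large as $\sqrt{\Delta}$ and would ruin the estimate whenever $\Delta\gg d$. Keeping the normalization $T_{\typG}$ \emph{inside} the middle factor $M^*$, whose norm stays within $1+o(1)$, is what allows the edge and degree noises to combine additively rather than multiplicatively.
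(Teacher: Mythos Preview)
Your proposal is correct and follows essentially the same route as the paper: decompose $\Ap-\typA$ and $T_{\typG}(\Ap-\typA)T_{\typG}$ as sums of independent mean-zero rank-two matrices, apply the independent-sum corollary of \thmref{freedman} with $\sigma^2=\Delta,\,M=1$ (resp.\ $\sigma^2=M=1/d$), and handle the degree noise separately via scalar Chernoff/Bernstein plus the factorization $T_{\Gp}\Ap T_{\Gp}=S\,M^*\,S$. The only cosmetic difference is that the paper controls the cross term by writing $I-\sM=(T_{\rm typ}T^{-1})(I-\sLp)(T^{-1}T_{\rm typ})$ and invoking $\|I-\sLp\|\le1$ for the \emph{random} Laplacian, whereas you bound $\|M^*\|\le\|I-\typL\|+\|X\|\le1+\|X\|$ using the \emph{typical} Laplacian; both exploit the same fact that a Laplacian has spectrum in $[0,2]$, and your closing remark about why the normalization must stay inside $M^*$ is exactly the point.
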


We will quickly derive some corollaries before we prove
\thmref{typicalmatrices}.

Let $B_1,B_2\in\R^{n\times n}_{\rm Herm}$. Standard eigenvalue
interlacing inequalities \cite{HornJohnson_MatrixAnalysis} imply:
\begin{equation}\label{eq:eigenperturbation}\max_{i\in\{0,\dots,{n-1}\}}|\lambda_i(B_1)-\lambda_i(B_2)|\leq
\|B_1-B_2\|.\end{equation}

This immediately implies that:
\begin{corollary}\label{cor:typicaleigenvalues}In the setting of
\thmref{typicalmatrices},
$$\|\Ap - \avgA\|\leq 4\,\sqrt{\Delta\,\ln (n/\delta)}\Rightarrow \forall 0\leq i\leq n-1,\, |\lambda_i(\Ap)-\lambda_i(\typA)|\leq 4\,\sqrt{\Delta\,\ln (n/\delta)}.$$
Therefore, the RHS holds with probability $\geq 1-\delta$ for any
$n^{-c}<\delta<1/2$ if $\Delta\geq C\ln n$. Similarly,
$$\|\sLp - \typL\|\leq 14\,\sqrt{\frac{\ln(4n/\delta)}{d}}\Rightarrow \forall 0\leq i\leq n-1,\, |\lambda_i(\sLp) - \lambda_i(\typL)|\leq 14\,\sqrt{\frac{\ln(4n/\delta)}{d}},$$
and the RHS holds with probability $\geq 1-\delta$ for all $\delta$
as above if $d\geq C\ln n$.\end{corollary}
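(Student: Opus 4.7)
The proof is essentially a direct combination of the eigenvalue perturbation bound \eqref{eq:eigenperturbation} with Theorem~\ref{thm:typicalmatrices}, so there is no real obstacle to overcome; the plan is just to spell out the implications cleanly. For the first claim, I would apply \eqref{eq:eigenperturbation} with $B_1 = \Ap$ and $B_2 = \typA$ (both Hermitian by construction, since $\Ap$ is the adjacency matrix of a graph and $\typA$ is the symmetric matrix with entries ${\bf p}(i,j)$). This immediately yields the deterministic implication
$$\|\Ap - \typA\| \leq 4\sqrt{\Delta \ln(n/\delta)} \;\Longrightarrow\; \max_{0 \leq i \leq n-1}|\lambda_i(\Ap) - \lambda_i(\typA)| \leq 4\sqrt{\Delta \ln(n/\delta)}.$$
The probabilistic conclusion then follows by invoking the first bound of Theorem~\ref{thm:typicalmatrices}, which guarantees that the left-hand side of this implication holds with probability at least $1-\delta$ whenever $\Delta \geq C\ln n$ and $n^{-c} \leq \delta \leq 1/2$, for the constant $C = C(c)$ furnished by that theorem.

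The second claim is handled in exactly the same way with $B_1 = \sLp$ and $B_2 = \typL$: both are Hermitian, so \eqref{eq:eigenperturbation} converts any spectral-norm bound on $\sLp - \typL$ into the same uniform bound on the differences of sorted eigenvalues, and the second conclusion of Theorem~\ref{thm:typicalmatrices} supplies such a spectral-norm bound with probability at least $1-\delta$ under the hypothesis $d \geq C\ln n$. The only thing worth noting is that one should appeal to the Weyl/interlacing inequality in the form stated in \eqref{eq:eigenperturbation}, which pairs eigenvalues in the same position of the sorted list (rather than merely bounding, say, the extremal eigenvalues); this is precisely what makes the uniform statement over all $0 \leq i \leq n-1$ available at no extra cost.
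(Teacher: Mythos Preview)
Your proposal is correct and matches the paper's own argument exactly: the paper simply states that the corollary follows immediately from the eigenvalue perturbation inequality \eqref{eq:eigenperturbation} applied to the pairs $(\Ap,\typA)$ and $(\sLp,\typL)$, combined with the probability bounds of \thmref{typicalmatrices}. There is nothing to add.
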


Now consider some $B\in\R^{n\times n}_{\rm Herm}$ and, for $a<b$
real, let $\Pi_{a,b}(B)$ be the orthogonal projector onto the space
spanned by the eigenvectors of $B$ corresponding to eigenvalues in
$[a,b]$. The following corollary is a consequence of
\lemref{perturbation} in the Appendix, as all operators on a
finite-dimensional Hilbert space are compact.

\begin{corollary}\label{cor:typicaleigenvectors}Given some $\gamma>0$, let $N_\gamma(\typA)$ be the set of all pairs $a<b$ such
$a+\gamma<b-\gamma$ and $\typA$ has no eigenvalues in
$(a-\gamma,a+\gamma)\cup (b-\gamma,b+\gamma)$. Then for
$\gamma>4\,\sqrt{\Delta\,\ln (n/\delta)}$, \begin{multline*}\|\Ap -
\avgA\|\leq 4\,\sqrt{\Delta\,\ln (n/\delta)}\\ \Rightarrow \forall
(a,b)\in N_\gamma(\typA),\, \|\Pi_{a,b}(\Ap)-\Pi_{a,b}(\typA)\|\leq
\left(\frac{4(b-a+2\gamma)}{\pi(\gamma^2-\gamma\sqrt{\Delta\,\ln
(n/\delta)})}\right)\,\sqrt{\Delta\,\ln (n/\delta)}.\end{multline*}
In particular, the RHS holds with probability $\geq 1-\delta$ for
any $n^{-c}<\delta<1/2$.

Define $N_\gamma(\typL)$ similarly. Then for
$\gamma>14\sqrt{{\ln(4n/\delta)/d}}$, \begin{multline*}\|\sLp -
\typL\|\leq 14\,\sqrt{\frac{\ln(4n/\delta)}{d}}\\ \Rightarrow
\forall (a,b)\in N_\gamma(\typA)\,
\|\Pi_{a,b}(\sLp)-\Pi_{a,b}(\typL)\|\leq
\left(\frac{14(b-a+2\gamma)}{\pi(\gamma^2-\gamma\sqrt{\frac{\ln
(4n/\delta)}{d}})}\right)\,\sqrt{\frac{\ln
(4n/\delta)}{d}}.\end{multline*} In particular, the RHS holds with
probability $\geq 1-\delta$ for any $n^{-c}<\delta<1/2$.
\end{corollary}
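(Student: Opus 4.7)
The corollary is a direct application of \lemref{perturbation} of the Appendix, which is an abstract operator-theoretic bound on spectral projectors under a self-adjoint perturbation of a compact operator; one specializes to $\C^n$, where every operator is trivially compact. The input is the norm bound produced by \thmref{typicalmatrices}: on the high-probability event of that theorem, $\|\Ap-\typA\|\leq\eta$ with $\eta:=4\sqrt{\Delta\ln(n/\delta)}$, and analogously $\|\sLp-\typL\|\leq 14\sqrt{\ln(4n/\delta)/d}$ for the Laplacian. Once the abstract perturbation statement is in hand, the corollary follows by plugging in these two bounds and applying union bounds already performed in \thmref{typicalmatrices}.

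To describe how I would prove the underlying perturbation statement in this finite-dimensional setting, I would use Riesz functional calculus. For any Hermitian $B\in\C^{n\times n}$ and any positively oriented rectifiable contour $\Gamma$ enclosing exactly the eigenvalues of $B$ lying in $[a,b]$,
$$\Pi_{a,b}(B)=\frac{1}{2\pi i}\oint_\Gamma (zI-B)^{-1}\,dz.$$
The natural choice is the rectangle $\Gamma$ with corners $a\pm i\gamma$ and $b\pm i\gamma$, which has perimeter $2(b-a+2\gamma)$. By the definition of $N_\gamma(\typA)$, every point of $\Gamma$ is at distance at least $\gamma$ from ${\rm spec}(\typA)$, so $\|(zI-\typA)^{-1}\|\leq 1/\gamma$ on $\Gamma$. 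By the interlacing inequality \eqnref{eigenperturbation}, each eigenvalue of $\Ap$ is within $\eta$ of an eigenvalue of $\typA$, and the hypothesis $\gamma>\eta$ then forces $\Gamma$ to stay at distance at least $\gamma-\eta$ from ${\rm spec}(\Ap)$; in particular $\Gamma$ encloses exactly the eigenvalues of $\Ap$ in $[a,b]$, and $\|(zI-\Ap)^{-1}\|\leq 1/(\gamma-\eta)$ on $\Gamma$.

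Feeding the resolvent identity
$$(zI-\Ap)^{-1}-(zI-\typA)^{-1}=(zI-\Ap)^{-1}(\Ap-\typA)(zI-\typA)^{-1}$$
into the Riesz integral and estimating by (length of $\Gamma$) times the supremum of the integrand yields
$$\|\Pi_{a,b}(\Ap)-\Pi_{a,b}(\typA)\|\leq\frac{2(b-a+2\gamma)}{2\pi}\cdot\frac{\eta}{\gamma(\gamma-\eta)}=\frac{(b-a+2\gamma)\,\eta}{\pi\gamma(\gamma-\eta)},$$
which, after substituting $\eta=4\sqrt{\Delta\ln(n/\delta)}$, recovers the claimed bound for $\Ap$ (up to routine rearrangement of constants). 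The Laplacian statement is identical with $\eta$ replaced by $14\sqrt{\ln(4n/\delta)/d}$.

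The only subtle point, and what I would consider the main obstacle, is certifying that $\Gamma$ encloses \emph{exactly} the eigenvalues of $\Ap$ in $[a,b]$ and that no eigenvalue of $\Ap$ sits on $\Gamma$; this is precisely where the hypothesis $\gamma>\eta$ is used, via the interlacing inequality. Everything else is a length-times-supremum estimate on the contour and so presents no difficulty.
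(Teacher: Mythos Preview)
Your proposal is correct and follows essentially the same approach as the paper: the corollary is deduced from \lemref{perturbation} in the Appendix (finite-dimensional operators being trivially compact), and that lemma is proved via the Riesz contour-integral formula over the same rectangle with corners $a\pm i\gamma$, $b\pm i\gamma$. The only cosmetic difference is that the paper bounds $\|R_W(z)-R_V(z)\|$ via a Neumann series expansion about $R_V(z)$ (yielding $\|R_V(z)\|\cdot\alpha/(1-\alpha)$ with $\alpha=\eps/\gamma$), whereas you use the second-resolvent identity together with the separate bound $\|(zI-\Ap)^{-1}\|\leq 1/(\gamma-\eta)$ obtained from interlacing; both routes give the identical estimate $\eps/(\gamma(\gamma-\eps))$.
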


The upshot is that for any range of eigenvalues of $\typA$ (resp.
$\typL$) that are well-separated from the rest of the spectrum, the
projection onto the corresponding eigenvectors of $A$ (resp. $\sL$)
will be typically close to that of $\typA$ (resp.
$\typL$)\footnote{Of course, there is not much one can do near
eigenvalue degeneracies, where eigenvectors are typically
unstable.}. We will see when dealing with inhomogeneous random
graphs that the separation conditions demanded by the corollary are
satisfied in non-trivial cases.

\subsection{Proof of the concentration result}

We now prove \thmref{typicalmatrices}.

\begin{proof}~[of \thmref{typicalmatrices}]~\ignore{In this proof we use the notation $\R^{n\times n}_{\rm Herm}$ for the set of $n\times n$ Hermitian
matrices. $\R^n$ will be identified with the space of column vectors
$v$, whose transposes are denoted by $v^*$. In this way, the inner
product of $w$ and $v$ is $w^*v$. Moreover, for $B\in\R^{n\times
n}_{\rm Herm}$, $\|B\|$ will denote the {\em spectral norm} of $B$,
which is the maximum of the absolute values of its eigenvalues.
Finally, if $Z$ is a $n\times n$ Hermitian random matrix with
integrable entries $Z_{ij}$, its expectation $\Ex{Z}$ is the
$n\times n$ Hermitian matrix whose $(i,j)$-th entry is $\Ex{Z_{ij}}$
[This and other facts about matrices and probabilities with them are
all covered in \secref{prelim} below, to which we refer for other
issues.]

In this proof (as in the remainder of the paper) the {\em
expectation} $\Ex{V}$ of a random matrix $V$ is the matrix whose
entries are the expected values of the entries of $V$. The {\em
variance} of $V$ is $\Ex{(V-\Ex{V})^2}$, which is
$\Ex{V^2}-\Ex{V}^2$ as in the standard case. [This and other facts
about matrices and probabilities are covered in \secref{prelim}
below, to which we refer for other issues.]

}Let $\{\cb_i\}_{i=1}^n$ be the canonical basis for $\R^n$ . For each
$1\leq i,j\leq n$, define a corresponding matrix $A_{ij}$:
\begin{equation}\label{eq:defAe}A_{ij}\equiv \left\{\begin{array}{ll} \cb_i\cb_j^* + \cb_j\cb_i^*,&i\neq j;\\ \cb_i\cb_i^*, & i=j.\end{array}\right.\in \R^{n\times n}_{\rm
Herm}.\end{equation} One can check that $\Ap = \sum_{1\leq i\leq
j\leq n}I_{ij}\,A_{ij}$ and $\avgA = \sum_{1\leq i\leq j\leq n}{\bf
p}(i,j)A_{ij}$. Therefore,
\begin{equation*}\label{eq:adjacencyasmartingale}\Ap - \avgA = \sum_{1\leq i\leq j\leq n}X_{ij}\mbox{ where } X_{ij}\equiv (I_{ij} - {\bf
p}(i,j))\, A_{ij},\; 1\leq i\leq j\leq n.\end{equation*}

We wish to apply \thmref{freedman} (or rather, \corref{freedman} in
\secref{freedman}) to the above sum. To do this, we first notice
that the random matrices $X_{ij}$, which take values in $\C^{n\times
n}_{\rm Herm}$, are independent (since the $I_{ij}$ are) and have
mean zero (since $\Ex{I_{ij}}={\bf p}(i,j)$). Moreover,
$$\|X_{ij}\|\leq \|A_{ij}\| = 1$$ as the eigenvalues of $A_{ij}$
are always contained in the set $\{1,0,-1\}$ . Thus the assumptions
of the Corollary apply with $M=1$, but we still need to compute the
sum of the variances. For this, fix some pair $ij$ and note that:
$$\Ex{X_{ij}^2} = \Ex{(I_{ij}-{\bf p}(i,j))^2A^2_{ij}} = {\bf p}(i,j)(1-{\bf
p}(i,j))A^2_{ij}$$ and a computation reveals that
\begin{equation}\label{eq:Aijsquared}A^2_{ij}=\left\{\begin{array}{ll}\cb_i\cb_i^* + \cb_j\cb_j^*, &
i\neq j\\ \cb_i\cb_i^*, & i=j.\end{array}\right.\end{equation}
Therefore,
\begin{eqnarray*}\sum_{i\leq j}\Ex{X_{ij}^2} &=& \sum_i {\bf p}(i,i)(1-{\bf
p}(i,i))\cb_i\cb_i^* + \sum_{i<j} {\bf
p}(i,j)(1-{\bf p}(i,j))(\cb_i\cb_i^*+\cb_j\cb_j^*)\\
&=& \sum_{i=1}^n\left(\sum_{j=1}^n{\bf p}(i,j)(1-{\bf
p}(i,j))\right)\,\cb_i\cb_i^*.\end{eqnarray*} This is a diagonal
matrix and its largest eigenvalue is at most
$$\max_{i\in[n]}\left(\sum_{j=1}^n{\bf p}(i,j)(1-{\bf
p}(i,j))\right)\leq \max_{i\in[n]}\sum_{j=1}^n{\bf p}(i,j)=\Delta.$$
One can now apply \corref{freedman} with $\sigma^2=\Delta$ and $M=1$
to obtain:
\begin{equation}\label{eq:boundfinalpercolation}\forall
t>0,\,\Pr{\|\Ap-\avgA\|\geq t}\leq 2n\,e^{-\frac{t^2}{8\Delta +
4t}}.\end{equation}

Now let $c>0$ be given and assume $n^{-c}\leq \delta\leq 1/2$. Then
it is clear that there exists a $C=C(c)$ independent of $n$ and
${\bf p}$ such that whenever $\Delta\geq C\ln n$,
$$t=4\,\sqrt{\Delta \ln(2n/\delta)}\leq 2\Delta.$$
Plugging this $t$ into \eqnref{boundfinalpercolation} yields:
$$\Pr{\|\Ap-\avgA\|\geq 4\,\sqrt{\Delta \ln(2n/\delta)}}\leq 2n e^{-\frac{t^2}{16\Delta}} = 2n\,e^{-\frac{16\Delta\ln(2n/\delta)}{16\Delta}} = \delta.$$
This proves the first inequality in \thmref{typicalmatrices}.

In order to prove the second inequality, we again fix $n^{-c}\leq
\delta\leq 1/2$. Our first task is to control the vertex degrees in
$\Gp$. Notice that for each $1\leq i\leq n$,
$\deg_{\Gp}(i)=\sum_{j=1}^nI_{ij}$ is a sum of independent indicator
random variables and the mean of that sum is $\deg_{\typG}(i)\geq
d$. Standard Chernoff bounds \cite{AlonSpencer_ProbMethod} (or the
case $d=1$ of our own \corref{freedman}!) imply that there exists a
value of $C=C(c)$ such that for $d\geq C\ln n$,
$$\forall i\in[n],\, \Pr{\left|\frac{d_{\Gp}(i)}{d_{\typG}(i)}-1\right|>
4\sqrt{\frac{\ln(4n/\delta)}{d}}}\leq \delta/2n.$$

Thus with probability $\geq 1-\delta/2$ one has that
\begin{equation}\label{eq:simplebound}\forall i\in[n],
\left|\frac{d_{\Gp}(i)}{d_{\typG}(i)}-1\right|\leq
4\sqrt{\frac{\ln(4n/\delta)}{d}}.\end{equation}

We will use this inequality to compare the matrices $$T = \mbox{
diagonal with $\deg_{\Gp}(i)^{-1/2}$ at the $(i,i)$th position}$$
and
$$T_{\rm typ} = \mbox{ diagonal with $\deg_{\typG}(i)^{-1/2}$ at the $(i,i)$th position}$$

By increasing $C$ if necessary (and recalling that $\delta>n^{-c}$,
$d>C\ln n$), we can ensure that the RHS of \eqnref{simplebound} is
at most $3/4$. By the Mean Value Theorem for any $x\in [-3/4,3/4]$:
$$|\sqrt{1+x} - 1|\leq \left(\sup_{\theta\in
[-3/4,3/4]}\frac{1}{2\sqrt{1+\theta}}\right)\,|x| = \,x.$$ Applying
this to
$$x\equiv \frac{d_{\Gp}(i)}{d_{\typG}(i)}-1$$
yields that:
\begin{eqnarray}\nonumber \|TT_{\rm typ}^{-1}-I\| &=& \max_{1\leq i\leq
n}\left|\frac{\sqrt{\deg_{\Gp}(i)}}{\sqrt{\deg_{\typG}(i)}} - 1 \right|\\
\label{eq:TT_p}&\leq & 4\sqrt{\frac{\ln(4n/\delta)}{d}}\mbox{with
probability $\geq 1-\delta/2$.}\end{eqnarray}

We now wish compare $\sLp = I - T\Ap T$ to $\typL= I-T_{\rm
typ}\avgA T_{\rm typ}$. Introduce an intermediate operator:
\begin{equation}\sM\equiv I - T_{\rm typ}\Ap T_{\rm
typ}.\end{equation} A calculation reveals that:
$$\sM = I - (TT_{\rm typ}^{-1})(I-\sLp)(TT_{\rm typ}^{-1})$$
The spectrum of any Laplacian lies in $[0,2]$
\cite{Chung_SpectralGraphTheory}; this implies $\|I-\sLp\|\leq 1$.
Using this in conjunction with \eqnref{TT_p} yields:
\begin{eqnarray}\nonumber \|\sM -\sLp\| &=& \|(TT_{\rm typ}^{-1})(I-\sLp)(TT_{\rm typ}^{-1}) - (I - \sLp)\|\\
\nonumber &\leq &  \|(TT_{\rm typ}^{-1}-I)(I-\sLp)(TT_{\rm
typ}^{-1})\|
\\ \nonumber & & + \|(I-\sLp)(TT_{\rm typ}^{-1})\|\\ \mbox{(use ``$\|ABC\|\leq \|A\|\|B\|\|C\|$")}\nonumber
&\leq & \|TT_{\rm typ}^{-1}-I\|\,\|I-\sLp\|\, \|TT_{\rm typ}^{-1}\|
\\ \nonumber & & + \|I-\sLp\|\,\|TT_{\rm typ}^{-1}-I\|\\
\nonumber & \leq &4\sqrt{\frac{\ln(4n/\delta)}{d}}
\left(1+4\sqrt{\frac{\ln(4n/\delta)}{d}}\right)
 + 4\sqrt{\frac{\ln(4n/\delta)}{d}} \\ \nonumber & \leq & 10\sqrt{\frac{\ln(4n/\delta)}{d}}\mbox{ with probability }\geq 1-\delta/2,\end{eqnarray}
 where again we increase $C$ if necessary to ensure that $d\geq C\ln
 n$ and $\delta>n^{-c}$ imply the desired bound.

 To finish the proof, we must show that $\|\sM - \typL\|
\leq 4\,\sqrt{\ln (4n/\delta)/d}$ with probability $\geq
1-\delta/2$. For this we will use the concentration result,
\corref{freedman}. One can write:
$$\typL - \sM = \sum_{i\leq j}T_{\rm typ}X_{ij}T_{\rm typ}$$
where the $X_{ij}$ are the same matrices from the first part of the
proof (cf. \eqnref{adjacencyasmartingale}).  Again we have a sum of
mean-$0$ independent random matrices, in this case:
$$Y_{ij}\equiv T_{\rm typ}X_{ij}T_{\rm typ} = (I_{ij}-{\bf p}(i,j))\,\frac{A_{ij}}{\sqrt{\deg_{\typG}(i)\deg_{\typG}(j)}},\mbox{ with $A_{ij}$ as in \eqnref{defAe}.}$$
In all possible cases, the eigenvalues of $Y_{ij}$ are contained in
the set:
$$\left\{\frac{\pm(1-{\bf p}(i,j))}{\sqrt{\deg_{\typG}(i)\deg_{\typG}(j)}},\frac{\pm{\bf p}(i,j)}{\sqrt{\deg_{\typG}(i)\deg_{\typG}(j)}},0\right\}$$
and therefore $$\|Y_{ij}\|\leq
1/\sqrt{\deg_{\typG}(i)\deg_{\typG}(j)}\leq 1/d.$$ The sum of
variances is:
\begin{eqnarray*}\sum_{i\leq j}\Ex{Y_{ij}^2} &=& \sum_{i\leq j}\Ex{\left(I_{ij}-{\bf
p}(i,j)\right)^2}\left(\frac{A_{ij}}{\sqrt{\deg_{\typG}(i)\deg_{\typG}(j)}}\right)^2
\\ \mbox{(use \eqnref{Aijsquared})}&=& \sum_{i<j}{\bf p}(i,j)(1-{\bf p}(i,j))\,\frac{\cb_i\cb_i^* +
\cb_j\cb_j^*}{\deg_{\typG}(i)\deg_{\typG}(j)} \\ & & + \sum_{i}{\bf
p}(i,i)(1-{\bf p}(i,i))\,\frac{\cb_i\cb_i^*}{\deg_{\typG}(i)^2}\\
&=&
\sum_{i=1}^n\frac{1}{\deg_{\typG}(i)}\left(\sum_{j=1}^n\frac{{\bf
p}(i,j)(1-{\bf
p}(i,j))}{\deg_{\typG}(j)}\right)\,\cb_i\cb_i^*\end{eqnarray*} Again
we have a diagonal matrix. Its $(i,i)$-th entry is at most:
$$\frac{1}{\deg_{\typG}(i)}\left(\sum_{j=1}^n\frac{{\bf
p}(i,j)}{d}\right) = \frac{1}{d}.$$ We may thus apply
\corref{freedman} to $\sum_{ij}Y_{ij}$ with $M=\sigma^2=1/d$ to
obtain:
$$\Pr{\|\typL-\sM\|\geq t}\leq 2n\,e^{-\frac{t^2\,d}{8 +
4t}}.$$ To finish the proof, we take:
$$t = 4\sqrt{\frac{\ln(4n/\delta)}{d}}.$$
We have already ensured that $t\leq 3/4\leq 2$. This implies
$$\Pr{\|\typL-\sM\|\geq 4\sqrt{\frac{\ln(4n/\delta)}{d}}}\leq 2n\,e^{-\frac{16\ln(4n/\delta)}{16}} \leq \frac{\delta}{2}.$$This was precisely the required bound.\end{proof}

\begin{remark}[Comparing concentration bounds]\label{rem:comparisons}We now explain why the Hoeffding bound of Christofides and Markstr\"{o}m \cite{ChristofidesMarkstrom_HoeffdingForMatrices} is insufficient for our purposes. In the case of the adjacency matrix, the random sum we deal with is $\sum_{ij}(I_{ij}-{\bf p}(i,j))A_{ij}$. We observed above that $A_{ij}$ has eigenvalues $1$, $-1$ and $0$, hence we would have to take $r_i=1/2$ in order to apply \thmref{hoeffding} to $(\Ap-\typA)/2$. A simple calculation shows that the exponent in that bound would be of the order
$-t^2/\binom{n}{2}$ for small enough $t$, which is much worse than
the $-t^2/\Delta$ behavior we obtain. Our improvement comes from the
fact that our ``variance" term is the largest eigenvalue of a sum,
{\em not} the sum of largest eigenvalues. Similar comments apply to
the concentration of the Laplacian.\end{remark}

\section{The Erd\"{o}s-R\'{e}nyi graph and quasi-randomness}\label{sec:quasirandom}

As a first illustration of \thmref{typicalmatrices}, we apply our
results to the Erd\"{o}s-R\'{e}nyi graphs. Our bounds are suboptimal
in this very special case, but the stronger results in
\cite{FurediKomlos_Eigenvalues,FeigeOfek_Spectral} require more
difficult arguments that do not seem to generalize to other cases of
bond percolation (cf. \secref{bondpercolation}). Moreover, our
result correctly predicts the range of $p$ for which one can expect
concentration of the adjacency matrix.

We then connect concentration to the theory of {\em quasi-randomness
for dense graphs} \cite{ChungGrahamWilson_QuasirandomGraphs} showing
that, in a certain sense, quasi-randomness is equivalent to
concentration of the adjacency matrix.

While we will not dwell on this point, a similar connection could be
presented between {\em random graphs with given expected degrees}
\cite{ChungLuVu_SpectraGivenDegrees} and concentration of the
Laplacian. Our bounds are also suboptimal in this setting, as
attested by a recent preprint of Coja-Oghlan and Lanka
\cite{CojaLanka_SpectrumGivenDegrees}.

\subsection{Concentration for the Erd\"{o}s-R\'{e}nyi
graph}\label{sec:concentrationerdosrenyi}

For $0<p<1$, the Erd\"{o}s-R\'{e}nyi graph $G_{n,p}$
\cite{Bollobas_RandomGraphs,AlonSpencer_ProbMethod} is the special
case of the model $\Gp$ in \secref{typicalmatrices} where ${\bf
p}(i,j)=p$ for $i\neq j$ and ${\bf p}(i,i)=0$ for $i=j$. Notice that
in this case $\typA = p({\bf 1}_n{\bf 1}_n^* - I_n)$ where ${\bf
1}_n\in\R^n$ is the all-ones vector and $I_n$ is the $n\times n$
identity matrix. Moreover, $\typL = I_n - {\bf 1}_n{\bf 1}_n^*/n$

The following result is immediate from \thmref{typicalmatrices}

\begin{proposition}\label{prop:erdosrenyi}There exists $C>0$ such that for all $n\in\N$, $n^{-2}<\delta<1/2$ and $p\in(0,1)$ with $p(n-1)\geq C\ln n$, if
$A_{n,p}$ be the adjacency matrix and $\sL_{n,p}$ the Laplacian of
the Erd\"{o}s-R\'{e}nyi graph $G_{n,p}$, then
$$\Pr{\|A_{n,p} - p({\bf 1}_n{\bf 1}_n^* - I_n)\|\leq 4\, \sqrt{p(n-1)\ln
(n/\delta)}}\geq 1- \delta$$
$$\Pr{\|\sL_{n,p} - (I_n - {\bf 1}_n{\bf 1}_n^*/n)\|\leq 14\, \sqrt{\frac{\ln
(4n/\delta)}{p(n-1)}}}\geq 1- \delta$$

\end{proposition}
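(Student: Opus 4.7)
The plan is to specialize Theorem~\ref{thm:typicalmatrices} to the Erd\"os--R\'enyi choice ${\bf p}(i,j) = p$ for $i \neq j$, ${\bf p}(i,i) = 0$, with $c = 2$ in the role of the free parameter of the theorem. The substance is just computing the weighted-graph quantities $d$, $\Delta$, $\typA$, $\typL$, and checking that the two explicit approximating matrices $p({\bf 1}_n{\bf 1}_n^* - I_n)$ and $I_n - {\bf 1}_n{\bf 1}_n^*/n$ appearing in the statement agree (up to negligible error) with $\typA$ and $\typL$.

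For the adjacency bound, every vertex of $\typG$ has weighted degree $(n-1)p$, so $d = \Delta = (n-1)p$; and the entrywise definition of $\typA$ gives $\typA = p({\bf 1}_n {\bf 1}_n^* - I_n)$ exactly. The hypothesis $p(n-1) \geq C \ln n$ is then precisely the hypothesis $\Delta \geq C\ln n$ of Theorem~\ref{thm:typicalmatrices}, so the first inequality of the Proposition is a direct restatement of the first inequality of the theorem with $c = 2$.

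For the Laplacian bound, I would first compute $\typL$ explicitly. Since $T_{\typG} = (p(n-1))^{-1/2} I_n$ is a scalar multiple of the identity,
\begin{equation*}
\typL = I_n - T_{\typG}\,\typA\,T_{\typG} = I_n - \frac{{\bf 1}_n{\bf 1}_n^* - I_n}{n-1} = \frac{n I_n - {\bf 1}_n {\bf 1}_n^*}{n-1}.
\end{equation*}
This is not literally $I_n - {\bf 1}_n{\bf 1}_n^*/n$, but the two matrices differ by a spectrally small correction: using that $n I_n - {\bf 1}_n {\bf 1}_n^*$ has spectrum $\{n,\ldots,n,0\}$,
\begin{equation*}
\bigl\| \typL - \bigl(I_n - {\bf 1}_n{\bf 1}_n^*/n\bigr)\bigr\| = \left\|\frac{n I_n - {\bf 1}_n {\bf 1}_n^*}{n(n-1)}\right\| = \frac{1}{n-1}.
\end{equation*}

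Finally I would apply the second part of Theorem~\ref{thm:typicalmatrices} with $c = 2$ and $d = p(n-1)$ to obtain $\|\sL_{n,p} - \typL\| \leq 14 \sqrt{\ln(4n/\delta)/(p(n-1))}$ with probability at least $1 - \delta$, and combine this with the triangle inequality and the $1/(n-1)$ discrepancy above. Since $p(n-1) \geq C\ln n$ and $\delta < 1/2$, $p \leq 1$ together force $(n-1)\ln(4n/\delta) \geq p$, so $\sqrt{\ln(4n/\delta)/(p(n-1))} \geq 1/(n-1)$, and the correction is absorbed into the main term at the cost of an unimportant change in the constant; by taking $C = C(2)$ sufficiently large in the theorem one recovers the stated constant $14$. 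There is no real obstacle in this argument beyond the minor bookkeeping of constants and the two forms of the typical Laplacian.
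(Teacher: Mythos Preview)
Your approach is correct and is exactly what the paper does: it states that the Proposition is ``immediate from \thmref{typicalmatrices}'' and gives no further argument. In fact you are more careful than the paper, which simply asserts $\typL = I_n - {\bf 1}_n{\bf 1}_n^*/n$ without noting the $1/(n-1)$ discrepancy you correctly compute; your absorption of this term via the triangle inequality is fine, though strictly speaking it yields a constant $15$ rather than $14$ (increasing $C$ does not reduce the fixed constant $14$ in the theorem), a cosmetic point the paper itself glosses over.
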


This result is qualitatively sharp in the sense that one cannot
expect that the Laplacian concentrates when $pn\ll \ln n$. To see
this, recall that the multiplicity of $0$ in the spectrum of
$\sL_{n,p}$ is the number of connected components of $G_{n,p}$ (this
is a deterministic statement; cf. \cite{Chung_SpectralGraphTheory}).
If $pn\leq \ln n$, the probability of there being $2$ or more
components is bounded away from $0$ \cite{Bollobas_RandomGraphs}.
But if $0$ has multiplicity $\geq 2$, \eqnref{eigenperturbation}
implies that $\|\sL_{n,p} - (I_n - {\bf 1}_n{\bf 1}_n^*/n)\|\geq 1$,
therefore $\sL_{n,p}$ is far from the ``typical Laplacian" with
positive probability.

Quantitatively, the bounds in \propref{erdosrenyi} can be improved.
We quickly sketch the argument for the adjacency matrix, which is
implicit in the work of Feige and Ofek \cite{FeigeOfek_Spectral}. A
key idea is that, since the typical adjacency matrix $p({\bf
1}_n{\bf 1}_n^* - I_n)$ has one very large eigenvalue and lots of
small ones, the same should hold for $A_{n,p}$.

One can use the reasoning in \cite[Lemma 2.1]{FeigeOfek_Spectral} to
show that, for $pn=\ohmega{\ln n}$ the dominant eigenvector of
$A_{n,p}$ is always close to ${\bf 1}_n/\sqrt{n}$. Moreover, the
largest eigenvalue is $pn + \bigoh{\sqrt{pn}}$ and all other are of
the order $\bigoh{\sqrt{pn}}$
\cite{FeigeOfek_Spectral,FurediKomlos_Eigenvalues}. This shows that,
with probability $\geq 1-\delta/2$
$$\|A_{n,p} - p{\bf 1}_n{\bf 1}_n^*\| = \bigoh{\sqrt{pn}},$$
and this results in
$$\|A_{n,p} - p({\bf 1}_n{\bf 1}_n^*-I_n)\| = \bigoh{\sqrt{pn}}\mbox{ with probability }\geq 1-\delta$$
because $\|pI_n\|=\bigoh{1}$.

\subsection{Quasi-randomness as concentration of the adjacency matrix}\label{sec:explanation}

We now point out that the idea of concentration of the adjacency
matrix is implicit in the theory of {\em dense quasi-random graphs}

This theory was initiated by Chung, Graham and Wilson
\cite{ChungGrahamWilson_QuasirandomGraphs}. Their surprising
discovery was that several properties that a Erd\"{o}s-R\'{e}nyi random graph is very likely to have are in fact equivalent.

More precisely, let $\{G_m\}_{m\in\N}$ be a sequence of graphs, each
$G_m$ having $n_m$ vertices and adjacency matrix $A_m$. Assume that
$n_m\to +\infty$ when $m\to+\infty$ and that $p>0$ is fixed. The
following statements (among others) are equivalent
\cite{ChungGrahamWilson_QuasirandomGraphs,KrivelevichSudakov_Pseudorandom}.
[The asymptotic notation refers to $m\to +\infty$.]

\begin{itemize}
\item {\bf [Q1]} There exists a $s\geq 4$ such that for all $0\leq k\leq \binom{s}{2}$, $G_m$ contains more than $p^{-k}(1-p)^{\binom{s}{2}-k}n_m^s$ induced labeled copies of each graph on $s$ vertices and $k$ edges.
\item {\bf [Q2]} $G_m$ has $\geq (1+\liloh{1})pn_m^2/2$ edges and $\leq (1+\liloh{1})(pn_m)^4$ labeled copies of the four-cycle $C_4$.
\item {\bf [Q3]} $G_m$ has $\geq (1+\liloh{1})pn_m^2/2$ edges, the largest eigenvalue of $A_m$ is $(1+\liloh{1})pn$ and all other eigenvalues of $A_m$ are $\liloh{n}$ in absolute value.
\item {\bf [Q4]} $\max_{S\subset V_m}|e(S)-p|S|^2/2|=\liloh{n_m^2}$ where $e(S)$ is the number of edges of $G_m$ inside $S$ and $V_m$ is the vertex set of $G_m$.\end{itemize}

We now provide a characterization of quasi-randomness in terms of
``concentration" of the adjacency matrix. Let
$$\typER\equiv p\,({\bf 1}_{n_m}{\bf 1}_{n_m}^* - I_{n_m}),$$
where ${\bf 1}_{n_m}\in\R^{n_m}$ is (again) the all-ones vector and
$I_{n_m}$ is the $n_m\times n_m$ adjacency matrix. This is the same
matrix that appears in \propref{erdosrenyi}.

The following result shows that a sequence of graphs is quasi-random
if and only if the adjacency matrices of the graphs are sufficiently
close to $\typER$.

\begin{proposition}\label{prop:quasirandom}A sequence $\{G_m\}_{m}$ of graphs as above satisfies properties {\bf [Q1]}-{\bf [Q4]} above if and only
if: $${\bf [P1]} \|A_m - \typER\|=\liloh{n}.$$\end{proposition}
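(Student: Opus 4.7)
The plan is to show [P1] is equivalent to [Q3], since [Q1]--[Q4] are already known to be equivalent by the Chung--Graham--Wilson theorem.

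For [P1] $\Rightarrow$ [Q3], I would first apply the eigenvalue interlacing inequality \eqnref{eigenperturbation} to $A_m$ and $\typER$. Since $\typER = p{\bf 1}_{n_m}{\bf 1}_{n_m}^* - pI_{n_m}$ has one eigenvalue equal to $pn_m - p$ (with eigenvector ${\bf 1}_{n_m}/\sqrt{n_m}$) and all other eigenvalues equal to $-p$, the assumption $\|A_m - \typER\| = o(n_m)$ immediately implies that the top eigenvalue of $A_m$ is $pn_m + o(n_m) = (1+o(1))pn_m$ and all remaining eigenvalues are $o(n_m)$ in absolute value. For the edge count, observe that $2e(G_m) = {\bf 1}_{n_m}^* A_m {\bf 1}_{n_m}$ and ${\bf 1}_{n_m}^* \typER {\bf 1}_{n_m} = pn_m(n_m-1)$, so
\[ |2e(G_m) - pn_m(n_m-1)| \le \|{\bf 1}_{n_m}\|^2\,\|A_m - \typER\| = o(n_m^2), \]
which gives $e(G_m) = (1+o(1))pn_m^2/2$.

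For [Q3] $\Rightarrow$ [P1], the main obstacle is that [Q3] is a statement about eigenvalues alone, whereas [P1] also implicitly controls the top eigenvector. I would overcome this by using the edge count. Write the spectral decomposition $A_m = \lambda_0 v_0 v_0^* + R'$ where $\lambda_0 = (1+o(1))pn_m$ is the top eigenvalue with unit eigenvector $v_0$ and $\|R'\| = o(n_m)$. Decompose $v_0 = \alpha\,{\bf 1}_{n_m}/\sqrt{n_m} + w$ with $w \perp {\bf 1}_{n_m}$, $\alpha \ge 0$, and $\alpha^2 + \|w\|^2 = 1$. Expanding ${\bf 1}_{n_m}^* A_m {\bf 1}_{n_m} = \lambda_0 \alpha^2 n_m + O(\|R'\|\,n_m)$ and using the edge count $(1+o(1))pn_m^2$ forces $\alpha^2 = 1 - o(1)$, hence $\|w\| = o(1)$.

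Now I would estimate $\|A_m - \typER\|$ directly. Writing
\[ A_m - \typER = (\lambda_0 v_0 v_0^* - (pn_m - p)({\bf 1}_{n_m}/\sqrt{n_m})({\bf 1}_{n_m}/\sqrt{n_m})^*) + R' + pI_{n_m}, \]
the first parenthesized term equals
\[ (\lambda_0\alpha^2 - (pn_m - p))\,({\bf 1}_{n_m}{\bf 1}_{n_m}^*/n_m) + \lambda_0\alpha\bigl(w({\bf 1}_{n_m}/\sqrt{n_m})^* + ({\bf 1}_{n_m}/\sqrt{n_m})w^*\bigr) + \lambda_0 ww^*, \]
each summand of which has norm $o(n_m)$ by the estimates $|\lambda_0\alpha^2 - (pn_m-p)| = o(n_m)$, $\lambda_0\|w\| = o(n_m)$, and $\lambda_0\|w\|^2 = o(n_m)$. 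Since $\|R'\| = o(n_m)$ and $\|pI_{n_m}\| = p = O(1)$, we conclude $\|A_m - \typER\| = o(n_m)$, establishing [P1] and completing the equivalence.
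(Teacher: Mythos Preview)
Your proof is correct and follows essentially the same route as the paper: both establish the equivalence via {\bf [Q3]}, use \eqnref{eigenperturbation} together with the quadratic form ${\bf 1}_{n_m}^* A_m {\bf 1}_{n_m}$ for the forward direction, and for the reverse direction reduce to showing the top eigenvector is $o(1)$-close to ${\bf 1}_{n_m}/\sqrt{n_m}$. The one noteworthy difference is that the paper imports this eigenvector fact as a black box (citing Fact~7 of \cite{ChungGrahamWilson_QuasirandomGraphs}), whereas you derive it directly from the edge-count lower bound in {\bf [Q3]} via the identity ${\bf 1}_{n_m}^* A_m {\bf 1}_{n_m} = \lambda_0\alpha^2 n_m + o(n_m^2)$; your argument is therefore more self-contained. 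One cosmetic slip: in your decomposition of $A_m - \typER$ the coefficient of $({\bf 1}_{n_m}/\sqrt{n_m})({\bf 1}_{n_m}/\sqrt{n_m})^*$ should be $pn_m$ rather than $pn_m - p$ (since $\typER = pn_m\,({\bf 1}_{n_m}/\sqrt{n_m})({\bf 1}_{n_m}/\sqrt{n_m})^* - pI_{n_m}$), but the discrepancy has norm $p = O(1)$ and does not affect the conclusion.
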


\begin{proof}[of \propref{quasirandom}] We will show that {\bf [P1]} is equivalent to {\bf [Q3]} in the previous
list. \\

{\bf [P1]}$\Rightarrow${\bf [Q3]} : The eigenvalues of $\typER$ are
$p(n_m-1)$ (with multiplicity $1$) and $-p$ (with multiplicity
$n_m-1$).

We use inequality \eqnref{eigenperturbation} above to deduce that:
$$|\lambda_{n-1}(A_m)-pn_m| =  |\lambda_{n-1}(A_m)-\lambda_{n-1}(\typER)| +
\bigoh{1} = \liloh{n_m}$$ and for $0\leq i\leq n_m-2$:
$$|\lambda_{i}(A_m)| = |\lambda_{i}(A_m)+p| + \bigoh{1} =
|\lambda_{i}(A_m)-\lambda_{i}(\typER)| + \bigoh{1} = \liloh{n_m}.$$
Moreover, the number of edges in $G_m$ is:
\begin{eqnarray*}\frac{1}{2}\,{\bf 1}^*_{n_m}A_{m}{\bf 1}_{n_m}&\geq& {\bf
1}^*_{n_m}\typER {\bf 1}_{n_m} - \frac{1}{2}\,{\bf
1}^*(A_m-\typER)\,{\bf 1} \\ &=& \frac{pn_m(n_m-1)}{2} - \|{\bf
1}_{n_m}\|^2 \|A_m-A_{K_{n_m}}\| \\ &=& \frac{pn_m^2}{2} -
\liloh{n_m^2}.\end{eqnarray*}

{\bf [Q3]}$\Rightarrow ${\bf [P1]}: It is immediate from {\bf [Q3]}
that $A_m$ is $\liloh{n}$-close to a rank-one operator: if
$\psi_{\max}$ is the (normalized) eigenvector corresponding to the
largest eigenvalue $\lambda_{\max}(A_m)$, then:
$$\|A_m - \lambda_{\max}(A_m)\,\psi_{\max}\psi_{\max}^*\| = \max_{0\leq i\leq
n_m-1}|\lambda_{i}(A_m)| = \liloh{n_m}.$$ By {\bf [Q3]} we also know
that:
$$\|\lambda_{\max}(A_m)\,\psi_{\max}\psi_{\max}^* -
pn_m\,\psi_{\max}\psi_{\max}^*\| =
|\lambda_{\max}(A_m)-pn_m|=\liloh{n}.$$ It is shown in the proof of
Fact 7 in \cite{ChungGrahamWilson_QuasirandomGraphs} that, under
{\bf [Q3]}, $\psi_{\max}$ is $\liloh{1}$-close to ${\bf
1}_{n_m}/\sqrt{n_m}$. Thus we see that:
$$\|pn_m\,\psi_{\max}\psi_{\max}^* - p{\bf 1}_{n_m}\,{\bf
1}^*_{n_m}\| = \liloh{n_m}.$$ Finally, we notice that

$$\typER = p{\bf 1}_{n_m}\,{\bf
1}^*_{n_m} - pI,$$ hence
$$\|\typER -p{\bf 1}_{n_m}\,{\bf
1}^*_{n_m}\|=\bigoh{1}.$$ Putting all the inequalities together
implies the desired result.\end{proof}

\section{Application to bond percolation}\label{sec:bondpercolation}

In the previous section we discussed a random graph model where the
typical Laplacian and adjacency matrices had one ``special"
eigenvalue with multiplicity $1$ and $n-1$ ``trivial" eigenvalues.
In this setting, proving concentration of the adjacency matrix (say)
essentially amounted to showing that one eigenvector was close to
what it should be while the other eigenvalues clustered around the
degenerate eigenvalue of the typical case.

We now consider a class of models for which one cannot expect this
strategy to work. Let $p\in(0,1)$ and $G=(V,E)$ be an arbitrary
unweighted graph on vertex set $V=[n]$. Consider the random subgraph
$G_p$ of $G$ that is obtained via by deleting each edge of $G$
independently with probability $1-p$. This model of {\em bond
percolation} has received much attention in recent years, with a
special focus the emergence of a giant component
\cite{BollobasEtAl_PercolationOnDense,FriezeKrivelevichMartin_GiantPseudorandom,AlonBenjaminiStacey_PercolationOnFinite,ChungLiuHorn_Percolation,Nachmias_MeanFieldPercolation,BorgsEtAl_PercolationTriangle}.
Much less seems to be known about the spectrum of $G_p$
\cite{ChungHorn_PercolationSpectrum}.

In this section we apply our general Theorem,
\thmref{typicalmatrices}, in order to answer the following question:
how large does $p$ need to be in order for the graph matrices to
concentrate? Clearly, this must occur way after the percolation
threshold.

Bond percolation is a special case of the random model $\Gp$ in
\secref{typicalmatrices}. To see this, one only needs to define:
$${\bf p}(i,j) = \left\{\begin{array}{ll} p & \mbox{if $ij\in E$,}\\ 0 & \mbox{ if not.}\end{array}\right.$$

A computation shows that the ``typical matrices" for this choice of
${\bf p}$ are:

$$\typA = pA_G, \mbox{ where $A_G$ is the adjacency matrix of
$G$;}$$
$$\typL = \sL_G, \mbox{ where $\sL_G$ is the Laplacian of
$G$.}$$

Moreover, the parameters $d$, $\Delta$ appearing in
\thmref{typicalmatrices} are $pd_G$ and $p\Delta_G$, where $d_G$
(resp. $\Delta_G$) is the minimum (resp. maximal) degree in $G$.

The following result is a direct corollary of
\thmref{typicalmatrices}.

\begin{theorem}\label{thm:bondpercolation}For each $c>0$ there exists a $C>0$ such that the following holds. Suppose that
$G$, $p$ and $G_p$ are as above and $pd_G\geq C\,\ln n$. Then:
$$\Pr{\|A_{G_p} - pA_G\|\leq 4\,\sqrt{p\Delta_G\,\ln (n/\delta)}}\geq 1-\delta$$
and
$$\Pr{\|\sL_{G_p} - \sL_G\|\leq 14\,\sqrt{\frac{\ln(4n/\delta)}{pd_G}}}\geq 1-\delta,$$
where $A_{G_p}$ and $\sL_{G_p}$ are the adjacency matrix and
Laplacian of $G_p$ (resp.)\end{theorem}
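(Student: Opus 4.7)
The plan is to recognize bond percolation as a specialization of the independent-edge model $\Gp$ from \secref{typicalmatrices} and then invoke \thmref{typicalmatrices} as a black box. Setting ${\bf p}(i,j)=p$ for $ij\in E$ and ${\bf p}(i,j)=0$ otherwise, the indicators $I_{ij}$ are trivially $0$ for non-edges of $G$ and equal to independent $\Po(p)$-indicators on the actual edges. Thus $\Gp$ has exactly the law of $G_p$, and its adjacency matrix $\Ap$ and Laplacian $\sLp$ coincide with $A_{G_p}$ and $\sL_{G_p}$.

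Next I would identify the typical matrices $\typA$ and $\typL$ associated to this ${\bf p}$. The $(i,j)$-entry of $\typA$ is ${\bf p}(i,j)=p\,\mathbf{1}[ij\in E]$, so $\typA=pA_G$. For the Laplacian, I would compute the weighted degrees $\deg_{\typG}(i)=\sum_j {\bf p}(i,j)=p\,\deg_G(i)$; hence the diagonal scaling matrix $T_{\typG}$ has entries $(p\deg_G(i))^{-1/2}=p^{-1/2}\deg_G(i)^{-1/2}$. Substituting gives
\[
\typL = I_n - T_{\typG}(pA_G)T_{\typG} = I_n - D_G^{-1/2}A_GD_G^{-1/2} = \sL_G,
\]
where the two factors of $p^{-1/2}$ cancel the factor of $p$ in $\typA$. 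In particular, the quantities $d=\min_i\deg_{\typG}(i)$ and $\Delta=\max_i\deg_{\typG}(i)$ that appear in \thmref{typicalmatrices} equal $pd_G$ and $p\Delta_G$ respectively.

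Finally, I would invoke \thmref{typicalmatrices} with the constant $C(c)$ it produces. The hypothesis $pd_G\geq C\ln n$ guarantees $d\geq C\ln n$ and \emph{a fortiori} $\Delta\geq C\ln n$, so both conclusions of that theorem apply for any $n^{-c}\leq \delta\leq 1/2$. Substituting $\Delta=p\Delta_G$ into the adjacency bound and $d=pd_G$ into the Laplacian bound yields exactly the two inequalities claimed in \thmref{bondpercolation}.

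Since this is a mechanical specialization, I do not anticipate any real obstacle; the only point that deserves a careful verification is the cancellation of the factor of $p$ in $\typL$, which is why bond percolation concentrates around $\sL_G$ itself (not $p\sL_G$). This small algebraic observation is exactly what makes the result useful: any spectral quantity of $G$ that is stable under small perturbations of $\sL_G$ is automatically inherited, up to the stated error, by $G_p$.
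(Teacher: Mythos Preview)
Your proposal is correct and matches the paper's own treatment exactly: the paper also defines ${\bf p}(i,j)=p\mathbf{1}[ij\in E]$, computes $\typA=pA_G$, $\typL=\sL_G$, $d=pd_G$, $\Delta=p\Delta_G$, and then states the theorem as ``a direct corollary of \thmref{typicalmatrices}.'' (A tiny slip: you wrote ``$\Po(p)$-indicators'' where you meant Bernoulli$(p)$ indicators.)
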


One can of course derive corollaries about eigenvectors and eigenvectors following Corollaries \ref{cor:typicaleigenvalues} and \ref{cor:typicaleigenvectors}. For
instance, suppose that:
$$\gamma> 14\sqrt{\frac{\ln(4n/\delta)}{pd_G}}$$
Then the following holds with probability $1-\delta$: for each $0\leq i\leq n-1$ such that the interval
$(\lambda_i(\sL_G)-2\gamma,\lambda_i(\sL_G)+2\gamma)$ contains no
eigenvalues of $\sL_G$ other than $\lambda_i(\sL)$,
$\lambda_i(\sL_{G_p})$ has multiplicity $1$ in the spectrum of
$\sL_{G_p}$ and moreover, the corresponding normalized eigenvectors $\psi$,
$\psi_p$ of $\sL_{G}$ and $\sL_{G_p}$ (resp.) satisfy:
$$\|\psi_p\psi_p^* - \psi\psi^*\|\leq
\frac{4}{\pi}\,\frac{\sqrt{\frac{\ln(4n/\delta)}{pd_G}}}{\gamma -
\sqrt{\frac{\ln(4n/\delta)}{pd_G}}}$$ with probability $\geq
1-\delta$. This implies:
$$1-(\psi^*\psi_p)^2\leq \frac{4}{\pi}\,\frac{\sqrt{\frac{\ln(4n/\delta)}{pd_G}}}{\gamma -
\sqrt{\frac{\ln(4n/\delta)}{pd_G}}}$$ for the same eigenvectors,
which implies that $\psi_p$ is close to $\psi$ or $-\psi$. A similar
result for the eigenspace projectors could be derived even if
$\lambda_i(G)$ had higher multiplicity. It seems quite remarkable
that one can approximately obtain the eigenvectors or eigenspaces of
$G$ from a (potentially very sparse) subgraph $G_p$.

We also note that the threshold for Laplacian concentration is
indeed $pd_G=\theita{\ln n}$, as shown in
\secref{concentrationerdosrenyi} in the special case of the
Erd\"{o}s-R\'{e}nyi random graph $G_{n,p}$.

The following simple corollary is also of interest.

\begin{corollary}There exist $C,C'>0$ such that, if $pd_G\geq C\ln n$, then with probability $1-1/n^2$,
$$|\lambda(G)-\lambda(G_p)|\leq C'\,\sqrt{\frac{\ln n}{pd_G}}.$$\end{corollary}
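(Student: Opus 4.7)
The plan is to apply Theorem \ref{thm:bondpercolation} with the choice $\delta = 1/n^2$, which yields that, whenever $pd_G \geq C\ln n$ for $C = C(c)$ with $c = 2$, the Laplacian bound
$$\|\sL_{G_p} - \sL_G\| \leq 14\sqrt{\frac{\ln(4n^3)}{pd_G}} \leq C''\sqrt{\frac{\ln n}{pd_G}}$$
holds with probability at least $1 - 1/n^2$, where $C''$ is an absolute constant.

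Next, I would invoke the eigenvalue perturbation inequality \eqref{eq:eigenperturbation} applied to the Hermitian matrices $\sL_G$ and $\sL_{G_p}$. This gives, simultaneously for every index $i$,
$$|\lambda_i(\sL_{G_p}) - \lambda_i(\sL_G)| \leq \|\sL_{G_p} - \sL_G\|.$$
In particular, both $|\lambda_1(\sL_{G_p}) - \lambda_1(\sL_G)|$ and $|\lambda_{n-1}(\sL_{G_p}) - \lambda_{n-1}(\sL_G)|$ are bounded by the same quantity.

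Finally, recall that $\lambda(H) = \min\{\lambda_1(\sL_H),\, 2 - \lambda_{n-1}(\sL_H)\}$ for any graph $H$. Since the function $(x,y)\mapsto \min\{x, 2-y\}$ is $1$-Lipschitz in each coordinate, we get
$$|\lambda(G) - \lambda(G_p)| \leq \max\bigl\{|\lambda_1(\sL_G) - \lambda_1(\sL_{G_p})|,\; |\lambda_{n-1}(\sL_G) - \lambda_{n-1}(\sL_{G_p})|\bigr\} \leq \|\sL_{G_p} - \sL_G\|.$$
Combining with the bound from the first step yields the claim with $C' = C''$. There is no real obstacle here; the only thing to check is that the universal constant $C$ from Theorem \ref{thm:bondpercolation} for the choice $c=2$ suffices to absorb the $\ln(4n^3)$ factor into a constant multiple of $\ln n$, which is routine.
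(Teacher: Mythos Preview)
Your proof is correct and matches the paper's intended argument: the paper does not spell out a proof for this corollary, calling it a ``simple corollary'' of \thmref{bondpercolation} and the eigenvalue perturbation consequences in \corref{typicaleigenvalues}, which is exactly the route you take.
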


We have singled out this bound in order to compare it with a recent
bound of Chung and Horn \cite{ChungHorn_PercolationSpectrum}. These
authors proved that, with high probability,
$$\lambda(G_p)\geq \lambda(G) - \bigoh{\sqrt{\frac{\ln n}{pd_G}} + \frac{(\ln n)^{3/2}}{pd_G (\ln\ln
n)^{3/2}}}.$$ Our bound is better for all values of $n$ and $pd_G$,
most dramatically for $\ln n\ll pd_G\ll \ln^{3/2-\eps}n$, in which
case their bound is vacuous while ours is non-trivial.

\section{Application to inhomogeneous random graphs}\label{sec:inhomogeneous}

In this section we consider a more complex random graph model that
is defined in terms of an {\em attachment kernel} $\kappa$, a {\em
density parameter} $0<p<1$ and a set of points
$X_{1},X_2,\dots,X_n$.

More precisely, let $\kappa:[0,1]^2\to \R_+\cup\{0\}$ be a measurable
function that is symmetric in the sense that for all $x,y\in[0,1]$,
$\kappa(x,y)=\kappa(y,x)$. Pick some vector $X_{1:n}\equiv
(X_1,\dots,X_n)$ of points in $[0,1]$. Now consider the following
weight function ${\bf p}:[n]^2\to[0,1]$:

\begin{equation}\label{eq:defpinhomogeneous}{\bf p}(i,j)\equiv \max\{p\kappa(X_i,X_j),1\},
\,(i,j)\in[n]^2.\end{equation}

One can define a random graph $\Gp$ as in \secref{typicalmatrices}
with the above weight function; we call this graph $G_{n,p,\kappa}$,
the {\em inhomogeneous random graph} on $n$ vertices, density
parameter $p$ and attachment kernel $\kappa$ (the dependency on
$X_{1:n}$ is implicit in this nomenclature). The adjacency matrix of this random graph will be denoted by $A_{n,p,\kappa}$

Our goal in this section will be to prove that, up to some error terms that are small with high probability, the
adjacency matrix $A_{n,\kappa,p}/pn$ of $G_{n,p,\kappa}$ will be related to the integral operator on $L^2([0,1])$ that is defined by $\kappa$.
\begin{equation}\label{eq:integralop}\begin{array}{lllll}T_\kappa
&:& L^2([0,1]) &\to& L^2([0,1])\\ & & f(\cdot) &\mapsto & \int_{0}^1
\kappa(\cdot,y)\,f(y)\,dy\end{array}\end{equation}

Similar results for the Laplacian of $G_{n,p,\kappa}$ are discussed
in \secref{final}.

\subsection{Some history of the model}

The phrase ``inhomogeneous random graph" comes from a paper by
Bollobas, Janson and Riordan
\cite{BollobasEtAl_InhomogeneousRandomGraphs} where the above model
was studied in the range $p=\theita{1/n}$ with background spaces
more general than $[0,1]$. Their goal was to study the structure of
connected components in the general model, in analogy with the
well-known Erd\"{o}s-R\'{e}nyi phase transition at $p=1/n$
\cite{AlonSpencer_ProbMethod}.

A related random graph model generating dense graphs ($p=1$) was
introduced in \cite{LovaszSzegedy_GraphLimits} and studied in
\cite{BorgsEtAl_ConvergentSequencesDenseGraphs}. This model is
related to the beautiful theory of {\em graph limits} where the
space of graphs is ``completed" into the space of {\em graphons},
which are non-negative, symmetric functions like $\kappa$ above,
with the further restriction that $\kappa\leq 1$. There is a fairly
complete correspondence between the properties of sequences of
graphs that are {\em convergent} in terms of normalized subgraph
counts and the corresponding limiting graphon. Conversely, the
sequence of random graphs correponding to a given graphon $\kappa$
converges to that same graphon. The {\em cut metric} that defines
graph convergence will be further discussed in \secref{cutmetric}
below.

The connection between the convergent graph sequences and
inhomogeneous random graphs was noted in
\cite{BollobasEtAl_PercolationOnDense,BollobasEtAl_CutMetric}, where
the authors studied bond percolation over a convergent sequence of
graphs and found the critical probability for existence of a giant
component. Other papers
\cite{BollobasRiordan_MetricsSparseGraphs,BollobasRiordan_VerySparseGraphs}
have focused on the relationship between convergence of subgraph
counts vs. convergence in the {\em cut metric} (see below) for
sparse graphs, a topic that is far from completely elucidated. In
what follows we will show that our random graphs converge to the
corresponding kernel in a stronger metric.

\subsection{The precise result}\label{sec:preciseresult}
We will use the following technical assumption.

\begin{assumption}\label{assum:kappa}$\kappa:[0,1]^2\to \R_+\cup\{0\}$ is a symmetric measurable function with
$$K\equiv\sup_{(x,y)\in[0,1]^2}\kappa(x,y)<+\infty.$$ Moreover, the points $X_1,X_2,\dots,X_n$ are random i.i.d. uniform over $[0,1]$.\end{assumption}

Let $\overline{X}_1\leq \overline{X}_2\leq \dots\leq \overline{X}_n$ be the ordered sequence of the $X_i$; ie. $\overline{X}_1$ is the minimum of the $X_i$, $\overline{X}_2$ is the second smallest element and so on (ties are broken arbitrarily). Let $\sigma_n$ be a permutation such that $X_{i}=\overline{X}_{\sigma_n(i)}$ for each $1\leq i\leq n$, chosen in a measurable manner. Associate with the graph $G_{n,p,\kappa}$ a symmetric, non-negative function from $[0,1]^2$ to $\R_+\cup\{0\}$:
$$\sG_{n,p,\kappa}\equiv \frac{1}{p}\,\sum_{ij\in E(G_{n,p,\kappa})} \Ind{\left(\frac{\sigma_n(i)-1}{n},\frac{\sigma_n(i)}{n}\right]\times \left(\frac{\sigma_n(j)-1}{n},\frac{\sigma_n(j)}{n}\right]},$$
where $\Ind{S}$  is the indicator function of the set $S$ and $E(G_{n,p,\kappa})$ is the edge set of $G_{n,p,\kappa}$. Notice that $\sG_{n,p,\kappa}$ defines a bounded linear operator on $L^2([0,1])$ via a formula similar to \eqnref{integralop}:
$$(T_{\sG_{p,n,\kappa}}f)(\cdot)\equiv \int_0^1 \sG_{p,n,\kappa}(\cdot,y)\,f(y)\,dy\;\;(f\in L^2([0,1])).$$

Let $\{\cb_i\}_{i=1}^n$ be the canonical basis of $\R^n$. Let us
consider two linear operators (both of which depend on $\sigma_n$
defined previously:
$$\begin{array}{lllll}H_n & : & \R^n &\to &L^2([0,1])\\
& & \psi=\sum_{i=1}^n\psi(i)\,\cb_i &\mapsto & \sum_{i=1}^n \sqrt{n}\,\psi(i)\,\Ind{\left(\frac{\sigma_n(i)-1}{n},\frac{\sigma_n(i)}{n}\right]}\\ & & & & \\ E_n & : & L^2([0,1]) &\to &\R^n\\
 & & f &\mapsto & \sum_{i=1}^n\left(\sqrt{n}\int_{\frac{\sigma_n(i)-1}{n}}^{\frac{\sigma_n(i)}{n}}\,f\,dx\right)\,\cb_i\end{array}$$
 and note that $T_{\sG_{n,p,\kappa}} = H_n A_{n,p,\kappa} E_n/pn$.

Finally, let ${\rm spec}(T_\kappa)$ be the {\em spectrum} of the operator $T_\kappa$ in \eqnref{integralop} (see \secref{integralop} to recall what the spectrum is).

\begin{theorem}[proven in \secref{proofinhomogeneous}]\label{thm:inhomogeneous} There exist universal constants $c,C>0$ such that the following holds under \assumref{kappa}. Given $\eps>0$, suppose there exists a $L$-Lipschitz function $\kappa_{\eps}$ that also takes values in $[0,K]$ and which is $\eps$-close to $\kappa$ in the $L^2([0,1]^2)$ norm. Define:
$$\theta=\theta(\kappa,\eps,L,K,n,p)\equiv 2\eps + c(L+K)\left(\frac{\ln n}{n}\right)^{1/4} + \sqrt{\frac{K\ln n}{pn}},$$
and assume $pn\geq C\ln n$ and $p\leq 1/K$. Then there exists an
event $\sE$ with probability $\Pr{\sE}\geq 1-n^{-2}$ such that,
inside $\sE$, the following properties hold:
\begin{enumerate}
\item The $n\times n$ matrices $A_{n,p,\kappa}$ and $E_n T_\kappa H_n$ satisfy:
$$\left\|\frac{A_{n,p,\kappa}}{pn} - E_n T_\kappa H_n \right\|\leq \theta;$$
\item The integral operators $T_{\sG_{n,p,\kappa}}$ and $T_\kappa$ satisfy:
$$\left\|T_{\sG_{n,p,\kappa}} - T_\kappa \right\|_{2\to 2}\leq \theta;$$
\item Given $S\subset\R$, let $m_{A_{n,\kappa,p}/pn}(S)$ be the sum of the multiplicities of all eigenvalues of $A_{n,\kappa,p}$ that lie in $S$ and define $m_{T_\kappa}(S)$ similarly. Then if $\inf_{s\in S}|s|>\theta$,
$$m_{A_{n,\kappa,p}/pn}(S)\leq m_{T_\kappa}(S^\theta)\mbox{ and }m_{T_\kappa}(S)\leq m_{A_{n,\kappa,p}/pn}(S^\theta)
$$ where $S^\theta\equiv \{x\in \R\,:\, \exists s\in S,\,|x-s|\leq \theta\}$.
\item Consider each pair $(\alpha,\gamma)$ where $\alpha\in {\rm spec}(T_\kappa)$ and $\gamma>\theta$ is such that $(\alpha-2\gamma,\alpha+2\gamma)$ contains no eigenvalue of $T_\kappa$ other than $\alpha$ itself. Let $P_\alpha$ be the orthogonal projection in $L^2([0,1])$ onto the eigenspace of $\alpha$ in $L^2([0,1])$ and consider the orthogonal projection $\Pi_{(\alpha-\gamma)pn,(\alpha+\gamma)pn}(A_{n,p,\kappa})$ in $\C^n$ over the span of the eigenvectors of $A_{n,p,\kappa}$ corresponding to eigenvalues in $[(\alpha-\gamma)pn,(\alpha+\gamma)pn]$. Then:
    $$\|\Pi_{(\alpha-\gamma)pn,(\alpha+\gamma)pn}(A_{n,p,\kappa}) - E_n P_\alpha H_n\|\leq \frac{4\theta}{\pi(\gamma-\theta)}.$$
\end{enumerate}
\end{theorem}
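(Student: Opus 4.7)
\textbf{Proof proposal for \thmref{inhomogeneous}.} My plan is to work conditionally on the point configuration $X_{1:n}$ throughout and reduce all four claims to a clean comparison between $A_{n,p,\kappa}/pn$ and the integral operator $T_\kappa$, using \thmref{typicalmatrices} for the randomness coming from the edge indicators and classical approximation arguments for the randomness coming from the $X_i$. Conditionally on $X_{1:n}$, the graph $G_{n,p,\kappa}$ is an instance of the model $\Gp$ with ${\bf p}(i,j)=p\,\kappa(X_i,X_j)$, so $\typA$ has entries $p\,\kappa(X_i,X_j)$ and $\Delta\le pnK$. \thmref{typicalmatrices} applied conditionally gives, on an event of probability $\ge 1-n^{-2}$,
\[
\Bigl\|\tfrac{A_{n,p,\kappa}}{pn}-\tfrac{\typA}{pn}\Bigr\|\le 4\sqrt{\tfrac{K\ln n}{pn}},
\]
which is the third summand of $\theta$. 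A direct calculation (using $\sigma_n$ and the intervals $I_i'=((\sigma_n(i)-1)/n,\sigma_n(i)/n]$) shows that, if $\tilde{\kappa}_n(x,y)\equiv \kappa(X_i,X_j)$ for $(x,y)\in I_i'\times I_j'$, then $\typA/pn=E_n T_{\tilde\kappa_n} H_n$; since $\|E_n\|,\|H_n\|\le 1$ and $\|T_{\tilde\kappa_n}-T_\kappa\|_{2\to 2}\le\|\tilde\kappa_n-\kappa\|_{L^2([0,1]^2)}$ by \eqnref{normofTeta}, the remainder of part~(1) reduces to bounding $\|\tilde\kappa_n-\kappa\|_{L^2}$.

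\textbf{Controlling $\|\tilde\kappa_n-\kappa\|_{L^2}$.} I insert the Lipschitz approximant $\kappa_\eps$ and use the triangle inequality on the three differences $\kappa-\kappa_\eps$, $\kappa_\eps-\widetilde{\kappa_\eps}_n$, and $\widetilde{\kappa_\eps}_n-\tilde{\kappa}_n$. The first piece contributes $\eps$ by assumption. For the second, I use the Lipschitz bound together with a Dvoretzky--Kiefer--Wolfowitz type tail estimate: on an event of probability $\ge 1-n^{-3}$ one has $\max_k|\overline{X}_k-k/n|=O(\sqrt{\ln n/n})$, so for $(x,y)\in I_i'\times I_j'$ the distance $|(x,y)-(X_i,X_j)|$ is $O(\sqrt{\ln n/n})$ and hence $\|\kappa_\eps-\widetilde{\kappa_\eps}_n\|_{L^\infty}\le 2L\cdot O(\sqrt{\ln n/n})$. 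For the third piece the $L^2$ error equals $\sqrt{n^{-2}\sum_{i,j}(\kappa-\kappa_\eps)^2(X_i,X_j)}$, an empirical second moment of an $[-K,K]$-bounded function with true mean $\le\eps^2$; a Hoeffding/Bernstein bound on this $U$-statistic gives, with probability $\ge 1-n^{-3}$, the value $\eps^2+O(K^2\sqrt{\ln n/n})$, whose square root is $\le\eps+cK(\ln n/n)^{1/4}$. Summing the three contributions yields $\|\tilde\kappa_n-\kappa\|_{L^2}\le 2\eps+c(L+K)(\ln n/n)^{1/4}$, completing part~(1). The main obstacle here is the $(\ln n/n)^{1/4}$ rate: it arises precisely from taking square roots of an $L^2$--type concentration in $n^2$ variables, and it will limit the quality of parts~(3) and~(4).

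\textbf{Part~(2).} From the definitions one checks $T_{\sG_{n,p,\kappa}}=H_n(A_{n,p,\kappa}/pn)E_n$. Writing
\[
T_{\sG_{n,p,\kappa}}-T_\kappa=H_n\!\left(\tfrac{A_{n,p,\kappa}}{pn}-E_nT_\kappa H_n\right)\!E_n+\bigl(H_nE_nT_\kappa H_nE_n-T_\kappa\bigr),
\]
the first term has $2\to 2$ norm at most $\theta$ by part~(1), and the second is $T_{\bar\kappa_n}-T_\kappa$ where $\bar\kappa_n$ is the $L^2$-projection of $\kappa$ onto functions constant on the rectangles $I_i'\times I_j'$; since $\tilde\kappa_n$ lies in that subspace, $\|\bar\kappa_n-\kappa\|_{L^2}\le\|\tilde\kappa_n-\kappa\|_{L^2}$, so part~(2) follows after absorbing a constant into the universal $c$ in the definition of $\theta$.

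\textbf{Parts~(3) and~(4).} The crucial observation is that $T_{\sG_{n,p,\kappa}}=H_n(A_{n,p,\kappa}/pn)H_n^*$ with $H_n$ an isometry from $\R^n$ into $L^2([0,1])$, so the non-zero spectrum of $T_{\sG_{n,p,\kappa}}$ coincides with that of $A_{n,p,\kappa}/pn$ with matching multiplicities, and the spectral projectors transform as $\Pi_{[a,b]}(T_{\sG_{n,p,\kappa}})=H_n\,\Pi_{[a,b]}(A_{n,p,\kappa}/pn)\,H_n^*$ whenever $0\notin[a,b]$. Since $T_\kappa$ is compact and self-adjoint and part~(2) bounds $\|T_{\sG_{n,p,\kappa}}-T_\kappa\|_{2\to 2}\le\theta$, the Weyl-type eigenvalue inequality from the Appendix (the analogue of \eqnref{eigenperturbation} for compact operators) immediately yields the two-sided multiplicity bound in part~(3). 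For part~(4) I invoke the Appendix's perturbation lemma for spectral projectors of compact operators (the infinite-dimensional version of \corref{typicaleigenvectors}), which gives $\|\Pi_{[\alpha-\gamma,\alpha+\gamma]}(T_{\sG_{n,p,\kappa}})-P_\alpha\|\le \frac{4\theta}{\pi(\gamma-\theta)}$ under the separation hypothesis on $\alpha$; translating back via $H_n$ and using $\Pi_{[(\alpha-\gamma)pn,(\alpha+\gamma)pn]}(A_{n,p,\kappa})=E_n\,\Pi_{[\alpha-\gamma,\alpha+\gamma]}(T_{\sG_{n,p,\kappa}})\,H_n$ produces the claimed bound with the identification $P_\alpha\leftrightarrow H_n\,E_n P_\alpha H_n\,E_n$ modulo an error of order $\theta$ already controlled by part~(2).
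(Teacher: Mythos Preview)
Your proposal is essentially the paper's proof: conditional application of \thmref{typicalmatrices} with $\Delta\le Kpn$, the same three-term splitting via the Lipschitz approximant $\kappa_\eps$ (DKW for $\max_k|\overline X_k-k/n|$ and a bounded-differences bound for the empirical $L^2$ error), and the two appendix perturbation lemmas for parts~(3) and~(4).

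Two small points where you diverge from the paper and lose a little. First, the paper proves part~(2) \emph{first}, via $T_{\sG_{n,p,\kappa}}-T_\kappa=(T_{\sG_{n,p,\kappa}}-\overline T)+(\overline T-T_\kappa)$ with $\overline T=T_{\tilde\kappa_n}$, and then obtains part~(1) by sandwiching with $E_n(\cdot)H_n$; your reversed order forces the extra term $\|H_nE_nT_\kappa H_nE_n-T_\kappa\|$, which costs an additional $2\eps+c(L+K)(\ln n/n)^{1/4}$ and so gives $4\eps$ rather than $2\eps$ in part~(2). Since the coefficient of $\eps$ in $\theta$ is fixed (it is not absorbed by the universal $c$), your part~(2) does not quite hit the stated bound; the fix is simply to swap the order. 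Second, the last clause of your part~(4) is superfluous: once the appendix lemma gives $\|\Pi_{[\alpha-\gamma,\alpha+\gamma]}(T_{\sG_{n,p,\kappa}})-P_\alpha\|\le 4\theta/\pi(\gamma-\theta)$, applying $E_n(\cdot)H_n$ and using $E_nH_n=I_n$ together with $\Pi_{[\alpha-\gamma,\alpha+\gamma]}(T_{\sG_{n,p,\kappa}})=H_n\Pi_{[(\alpha-\gamma)pn,(\alpha+\gamma)pn]}(A_{n,p,\kappa})E_n$ yields the claim directly, with no further ``identification modulo an error'' needed.
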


This Theorem implies that, up to error terms that are small with
high probability, $A_{n,p,\kappa}/pn$ is defined
solely in terms of the kernel function $\kappa$, up to a permutation of coordinates. It also implies that, statistical
parlance, it implies that the non-zero eigenvalues of $A_{n,p,\kappa}$ are
strongly consistent estimators of the non-zero eigenvalues of $T_\kappa$ when
$n\to +\infty$ and $p=p(n)$ $pn/\ln n\to +\infty$.

Both of these assertions hinge on the fact that Lipschitz functions
are dense in $L^2([0,1]^2)$. Unfortunately, our error bounds are not
independent of $\kappa$, as quality of the approximation by
Lipschitz functions, measured by the size of the Lipschitz constant for a given approximation error $\eps$, may vary with $\kappa$. This is in contrast with
approximation in the {\em cut norm}, which we now discuss.

\subsection{Convergence in the operator and cut metrics}\label{sec:cutmetric}

\subsubsection{The cut norm and the cut metric}

Any function $\eta\in L^1([0,1]^2)$ determines a bounded linear
operator $\tilde{T}_\eta:L^{\infty}([0,1])\to L^{1}([0,1])$ via the formula that we already used to define $T_\kappa$ and $T_{\sG_{p,n,\kappa}}$:
\begin{equation}\label{eq:Teta}\tilde{T}_\eta f(\cdot) \equiv \int_{0}^1
\eta(\cdot,y)\,f(y)\,dy.\end{equation} The cut norm of $\eta$ is the
$L^\infty\to L^1$ norm of $\tilde T_\eta$:
\begin{equation}\label{eq:cutnorm}\|\eta\|_{\rm cut}\equiv
\sup\left\{\left|\int_0^1 (T_\eta f)(x)g(y)\,dx\right|\,:\, f,g\in
L^{\infty}([0,1]), \|f\|_{L^\infty}\leq 1,\|g\|_{L^\infty}\leq
1\right\}.\end{equation} One can check that $\|\eta\|_{\rm cut}\leq
\|\eta\|_{L^1}$ always. This definition of $\|\eta\|_{\rm cut}$ is natural from
the point of view of Functional Analysis; a more ``combinatorial"
definition,
$$\|\eta\|_{\rm cut,2}\equiv \sup\left\{\left|\int_{A\times B}
\eta(x,y)\,dx\,dy\right|\,:\, A,B\subset [0,1]\mbox{
measurable}\right\}$$ is equivalent to the previous one in the sense that:
$$\frac{1}{4}\|\eta\|_{\rm cut}\leq \|\eta\|_{\rm cut,2}\leq \|\eta\|_{\rm cut}.$$

Now assume that $G_1$ and $G_2$ are graphs with common vertex set $[n]$ and adjacency matrices $A_{G_1},A_{G_2}$. Define:
$$\kappa_{G_i,p}\equiv \sum_{1\leq i,j\leq n\,:\, ih\in E(G_i)} \Ind{\left(\frac{i-1}{n},\frac{i}{n}\right]\times \left(\frac{j-1}{n},\frac{j}{n}\right]}.$$
Then one sees that: $$\|\kappa_{G_1,p}-\kappa_{G_2,p}\|_{\rm cut,2} =
\frac{\max_{S,V\subset[n]}\left|\sum_{(i,j)\in S\times
W}(A_{G_1}(i,j)-A_{G_2}(i,j))\right|}{pn^2}$$ is the normalized {\em cut
norm} of $A_G-A_H$ \cite{LovaszSzegedy_GraphLimits}.

Thus the cut norm on $L^1([0,1]^2)$ induces a distance on graphs.
Notice, however, that this distance might be positive even though
$G$ and $H$ are isomorphic. This motivates the following definition:
given two kernels $\kappa,\kappa''\in L^1([0,1]^2)$ , say that
$\kappa''$ is a rearrangement of $\kappa$ ($\kappa''\approx \kappa$)
is there exists a measure-preserving bijection $\tau:[0,1]\to [0,1]$
such that $\kappa(x,y)=\kappa(\tau(x),\tau(y))$ for almost every
$(x,y)\in [0,1]^2$. The {\em cut metric} assigng to each pair
$\kappa,\kappa'$ of kernels a distance:
$$d_{\rm cut}(\kappa,\kappa')\equiv \inf\{\|\kappa''-\kappa'\|_{\rm
cut}\,:\, \kappa''\approx \kappa\}.$$ Notice that the cut metric
does not distinguish between (the kernels of) isomorphic graphs.

\subsubsection{The operator norm and the operator metric}

The metric $d_{\rm cut}$ yields a criterion for convergence of graph
sequences. In the dense case $p=\theita{1}$, this implies the
convergence of normalized subgraph counts and also gives a criterion
for testable graph properties
\cite{LovaszSzegedy_GraphLimits,BorgsEtAl_ConvergentSequencesDenseGraphs}.
As mentioned above, much less is understood about the case
$p=\liloh{1}$ (see however the conjectures of Bollob\'{a}s and
Riordan \cite[Section 5.2]{BollobasRiordan_MetricsSparseGraphs}).

\thmref{inhomogeneous} is mostly concerned with the
eigenvalues and the eigenvectors of the adjacency matrix $A_{n,p,\kappa}$. Unfortunately, in general we do not even know how to control the eigenvalues of $A_{n,p,\kappa}$ in terms of the cut norm alone. For {\em bounded} kernels ($p=\theita{1}$), this is easy enough (see \cite[Theorem
6.6]{BorgsEtAl_ConvergentSequencesDenseGraphsII}), but there are difficulties in extending this to the
sparse case. This does seem to be a serious problem, as related difficulties appear in
\cite{BollobasRiordan_MetricsSparseGraphs} when the authors attempt
to relate the convergence of subgraph counts to cut metric
convergence. [Estimating the eigenvalues is related to
counting cycles in the corresponding graph or graphon.]

Luckily, a stronger notion of convergence implied by the $L^2\to L^2$ norm suffices for our purpose, and it is precisely this notion that we achieve via our methods.

We need some definitions in order to properly state this. Given
$\eta\in L^2([0,1]^2)$, define a bounded linear operator $T_\eta$
from $L^2([0,1])$ to itself via the formula in \secref{integralop}; this is the same as \eqnref{Teta}, except that the domain and range of $\tilde{T}_\eta$ are different. The
operator or ``$L^2\to L^2$" norm of $\eta$ is the $L^2\to L^2$ norm of
$T_\eta$, also defined in \secref{integralop}:
$$\|\eta\|_{\rm op} \equiv\|T_\eta\|_{L^2\to L^2}.$$ From \eqnref{cutnorm} we see that
that $\|\eta\|_{\rm op}\geq \|\eta\|_{\rm cut}$ whenever $\eta$ is square-integrable.

In analogy with the cut metric, one can also define an {\em operator
(pseudo-)metric} on square-integrable kernels via the formula:
$$d_{\rm op}(\kappa,\kappa')\equiv \inf\{\|\kappa''-\kappa'\|_{\rm op}\,:\,\kappa''\approx
\kappa\}.$$

One can show via our results that when \assumref{kappa} holds, $n\gg 1$ and $p\gg \ln n/n$, then the kernel determined by $G_{n,p,\kappa}$ -- which is equivalent to $\sG_{n,p,\kappa}$ in \thmref{inhomogeneous} -- converges in the $d_{\rm op}$ metric to $\kappa$. We omit the details.

A drawback of $d_{\rm op}$ is that it lacks a corresponding (weak or strong) regularity lemma, which would allow one to approximate up to error $\eps$ any (say bounded) kernel $\kappa$ by simple functions taking at most $m=m(\eps,\|\kappa\|_{L^\infty})$ values. Indeed, this is precisely why the bound in \thmref{inhomogeneous} depends on $\kappa$.

\subsection{Proof of \thmref{inhomogeneous}}\label{sec:proofinhomogeneous}

The proof will consist of several steps.

\subsubsection{The relationship between $T_{\sG_{n,p,\kappa}}$ and $A_{\sG_{n,p,\kappa}}$}

For $f,g\in L^2([0,1])$, define $(f,g)_{L^2}\equiv \int_0^1 f(x)g(x)\,dx$ and $\|f\|_{L^2}^2\equiv (f,f)_{L^2}$.

The following facts can be easily checked (proof omitted).
\begin{equation}\label{eq:isadjoint}\forall f\in L^2([0,1]),\, \forall \psi\in \R^n,\, (f,H_n\psi)_{L^2} = (E_n f)^*\psi \mbox{ (i.e. $E_n$ is the adjoint of $H_n$);}\end{equation}
\begin{equation}\label{eq:Hnhasnorm1}\forall \psi,\phi\in \R^n,\, (H_n\psi,H_n\phi)_{L^2} = \psi^*\phi \mbox{ (i.e. $H_n$ is an isometry)'}\end{equation}
\begin{equation}\label{eq:Pnhasnorm1}\forall f\in L^2,\, \|E_n f\| \leq \|f\|_{L^2} \mbox{ (i.e. $E_n$ has operator norm at most $1$);}\end{equation}
\begin{equation}\label{eq:identity}E_n H_n = I_n,\mbox{ the identity operator on $\R^n$;}\end{equation}
\begin{equation}\label{eq:projector}H_n E_n = \Pi_n,\mbox{ the projection onto the span of }\left\{\left(\frac{i-1}{n},\frac{i}{n}\right]\right\}_{i=1}^n; \mbox{ and}\end{equation}
\begin{equation}\label{eq:GandA}T_{\sG_{n,p,\kappa}} = \frac{1}{pn} H_n\,A_{n,p,\kappa}\,E_n, \mbox{ as seen above.}\end{equation}

Let us now relate the non-zero eigenvalues and eigenvectors of $A_{n,p,\kappa}$ with those of $T_{\sG_{n,p,\kappa}}$. \ignore{The first thing to notice is that:
$$T_{\sG_{n,p,\kappa}}= \Pi_n T_{\sG_{n,p,\kappa}}.$$ This implies that if $0\neq f\in L^2([0,1])$ is an eigenvector of $T_{\sG_{n,p,\kappa}}$ with eigenvalue $\alpha\neq 0$,
$$ \Pi_n f = \frac{\Pi_n T_{\sG_{n,p,\kappa}} f}{\alpha} = \frac{T_{\sG_{n,p,\kappa}}\,f}{\alpha} = f.$$
In other words, $f$ belongs to the range of $\Pi_n$ and is therefore constant on each interval $((i-1)/n,i/n]$. One can check that this implies that $\exists \psi\in\C^n$, $H_n\psi = f$. Using
\eqnref{GandA} and \eqnref{identity} we deduce that:
$$A_{n,p,\kappa} \psi = pn\, E_n T_{\sG_{n,p,\kappa}} H_n\psi = (\alpha pn)\, E_n H_n\psi = (\alpha pn)\psi.$$
Therefore, all eigenvectors of $T_{\sG_{n,p,\kappa}}$ with non-zero eigenvalue $\alpha$ are of the form $H_n\psi$ for some eigenvector $\psi$ of $A_{n,p,\kappa}$ with eigenvalue $\alpha pn$. It is easy to see that the converse holds.} Write:
$$A_{n,p,\kappa} = \sum_{\alpha:\alpha pn \in {\rm spec}(A_{n,p,\kappa})} (\alpha pn)\,\Pi_{\alpha}$$
where each $\Pi_{\alpha}$ the projection onto the eigenspace corresponding to $\alpha pn$. By \eqnref{GandA},
$$T_{\sG_{n,p,\kappa}}= \sum_{\alpha:\alpha pn \in {\rm spec}(A_{n,p,\kappa})} \alpha \,H_n\,\Pi_{\alpha}E_n.$$
\begin{claim}\label{claim:puf}The operators $H_n\,\Pi_{\alpha}E_n$ are orthogonal projections with orthogonal ranges. Therefore, the non-zero eigenvalues of $T_{\sG_{n,p,\kappa}}$ are the numbers $\alpha\neq 0$ with $\alpha pn \in{\rm spec}(A_{n,p,\kappa})$. Moreover, for each such $\alpha$, $H_n\,\Pi_{\alpha}E_n$ is the projection onto the corresponding eigenspace of $T_{\sG_{n,p,\kappa}}$.\end{claim}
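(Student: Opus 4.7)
The plan is to exploit the displayed identities \eqnref{isadjoint}--\eqnref{projector} algebraically. The crucial facts are that $E_n$ is the adjoint of $H_n$ and $E_n H_n = I_n$; together these make $H_n \Pi_\alpha E_n$ behave like $\Pi_\alpha$ conjugated by an isometry.

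First I would verify that each $H_n \Pi_\alpha E_n$ is an orthogonal projection. Self-adjointness follows from $E_n = H_n^*$ (i.e.\ \eqnref{isadjoint}) and self-adjointness of $\Pi_\alpha$:
\[(H_n \Pi_\alpha E_n)^* = H_n \Pi_\alpha^* E_n = H_n \Pi_\alpha E_n.\]
Idempotence uses \eqnref{identity}:
\[(H_n \Pi_\alpha E_n)(H_n \Pi_\alpha E_n) = H_n \Pi_\alpha (E_n H_n) \Pi_\alpha E_n = H_n \Pi_\alpha^2 E_n = H_n \Pi_\alpha E_n.\]
Orthogonality of ranges for distinct $\alpha \neq \beta$ follows from the same identity, since $\Pi_\alpha \Pi_\beta = 0$ in $\R^n$:
\[(H_n \Pi_\alpha E_n)(H_n \Pi_\beta E_n) = H_n \Pi_\alpha \Pi_\beta E_n = 0.\]

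With this in hand, the formula $T_{\sG_{n,p,\kappa}} = \sum_\alpha \alpha\, H_n \Pi_\alpha E_n$ already exhibits $T_{\sG_{n,p,\kappa}}$ as a weighted sum of mutually orthogonal projections. Applying $T_{\sG_{n,p,\kappa}}$ to any $f$ in the range of $H_n \Pi_\alpha E_n$ (with $\alpha pn \in \mathrm{spec}(A_{n,p,\kappa})$ and $\alpha \neq 0$) and using the orthogonality just proved shows $T_{\sG_{n,p,\kappa}} f = \alpha f$; so each such $\alpha$ is an eigenvalue and $H_n \Pi_\alpha E_n$ projects into the corresponding eigenspace.

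For the converse I would argue that there are no other non-zero eigenvalues. Since $H_n E_n = \Pi_n$ by \eqnref{projector}, we have $T_{\sG_{n,p,\kappa}} = \Pi_n T_{\sG_{n,p,\kappa}}$, so any eigenvector $f$ with non-zero eigenvalue $\gamma$ lies in $\mathrm{range}(H_n)$; write $f = H_n \psi$. Using \eqnref{identity} and $A_{n,p,\kappa} = \sum_\alpha (\alpha pn) \Pi_\alpha$,
\[\gamma H_n \psi \;=\; T_{\sG_{n,p,\kappa}} H_n \psi \;=\; \sum_\alpha \alpha H_n \Pi_\alpha E_n H_n \psi \;=\; H_n\!\left(\frac{A_{n,p,\kappa}\psi}{pn}\right),\]
and injectivity of $H_n$ (it is an isometry by \eqnref{Hnhasnorm1}) yields $A_{n,p,\kappa}\psi = \gamma pn \,\psi$ with $\psi \neq 0$. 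Hence $\gamma pn \in \mathrm{spec}(A_{n,p,\kappa})$, and $\psi$ lies in the range of the corresponding $\Pi_\gamma$, so $f = H_n \psi = H_n \Pi_\gamma E_n f$ is in the range of $H_n \Pi_\gamma E_n$. This closes the loop.

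The only subtle point is remembering that the decomposition $\sum_\alpha \alpha\, H_n\Pi_\alpha E_n$ may silently drop the $\alpha = 0$ summand, so one must rule out \emph{extra} eigenvalues arising from $\ker T_{\sG_{n,p,\kappa}}$ that are not visible in the sum; the ``$\Pi_n$ on the left'' argument above handles this cleanly. The rest is a straightforward manipulation of identities \eqnref{isadjoint}--\eqnref{identity}.
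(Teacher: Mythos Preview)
Your proof is correct and follows essentially the same approach as the paper's: both verify idempotence via $E_nH_n=I_n$, self-adjointness via $E_n=H_n^*$, and orthogonality of ranges (you do this algebraically via $\Pi_\alpha\Pi_\beta=0$, the paper does it by describing the ranges as $H_n(\mathrm{range}\,\Pi_\alpha)$ and invoking that $H_n$ is an isometry). You spell out the converse direction more explicitly than the paper, which simply asserts that ``the other assertions follow directly.''
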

\begin{proof}[of the Claim] First notice that for each $\alpha$:
$$(H_n\,\Pi_{\alpha}E_n)^2 = H_n\,\Pi_{\alpha}E_n H_n \Pi_{\alpha} E_n = H_n\,\Pi_{\alpha}E_n.$$
because $E_nH_n = I_n$ (eqn. \eqnref{identity}) and
$\Pi_{\alpha}^2=\Pi_{\alpha}$. One can also check that for all
$f,g\in L^2([0,1])$,
$$(f,H_n\,\Pi_{\alpha}E_n g)_{L^2} = (H_n f)^* (\Pi_{\alpha} E_n g) = (\Pi_{\alpha} H_n f)^* (E_n g) = (H_n\,\Pi_{\alpha}E_nf, g)_{L^2},$$
where we used \eqnref{isadjoint} for the first and third equalities
and the fact that $\Pi_{\alpha}=\Pi_\alpha^*$ for the second one. It
follows that $H_n\,\Pi_{\alpha}E_n$ is a self-adjoint operator on
$L^2$ that equals its square; this means that it is an orthogonal
projection onto its range.

To see that these ranges are orthogonal for distinct $\alpha$,
notice that the range of $H_n\,\Pi_{\alpha}E_n$ is the set of all
vectors of the form $H_n\psi$ where $\psi$ belongs to the range of
$\Pi_{\alpha}$ and is therefore an eigenvector of $A_{n,p,\kappa}$
with eigenvalue $\alpha pn$. But eigenvectors of $A_{n,p,\kappa}$
with distinct eigenvalues are orthogonal, hence their images under
$H_n$ are orthogonal in $L^2$ (by \eqnref{Hnhasnorm1}).

The other assertions follow directly.\end{proof}

\subsubsection{The concentration argument}

Let us introduce a matrix  $\overline{A}_{n,p,\kappa}$ whose
$(i,j)$-th entry is $p\kappa(X_i,X_j)$, $1\leq i\leq j\leq n$.
Conditioning on the realization of the $X_1,\dots,X_j$, our random
graph model has independent edges with respective probabilities
${\bf p}(i,j)=p\kappa(X_i,X_j)$ and $\overline{A}_{n,p,\kappa}$ is
precisely the typical adjacency matrix $\typA$ in this setting. We
deduce from \thmref{typicalmatrices} that there exists a constant
$C>0$ independent of $n,\kappa$ and $X_1,\dots,X_n$, such that if
$\Delta=\Delta(X_1,\dots,X_n)$ is as in that Theorem and $\Delta
\geq C\ln n$,
$$\Pr{\|A_{n,p,\kappa}-\overline{A}_{n,p,\kappa}\|\geq 4\,\sqrt{\Delta \ln(2n^2)}\mid X_1,\dots,X_n}\leq \frac{1}{2n^{2}},$$
In our setting we always have
$$\Delta= \max_{1\leq i\leq n}\sum_{j=1}^np\kappa(X_i,X_j)\leq Kpn$$
where $K$ is the quantity in \assumref{kappa}. Therefore,
$$\Pr{\|A_{n,p,\kappa}-\overline{A}_{n,p,\kappa}\|\geq 4\,\sqrt{Kpn\ln(2n^2)}}\leq \frac{1}{2n^{2}}.$$
Let
\begin{equation}\label{eq:Tbar}\overline{T}\equiv H_n\overline{A}_{n,p,\kappa}E_n/pn = \sum_{1\leq i,j\leq n}\kappa(X_i,X_j)\Ind{\left(\frac{\sigma_n(i)-1}{n},\frac{\sigma_n(i)}{n}\right]\times \left(\frac{\sigma_n(j)-1}{n},\frac{\sigma_n(j)}{n}\right]}\end{equation} Since $H_n$ is an isometry (by \eqnref{Hnhasnorm1})
and $E_n$ has norm at most $1$ (by \eqnref{Pnhasnorm1}),
$$\|\overline{T} - T_{\sG_{n,p,\kappa}}\|_{L^2\to L^2} = \frac{1}{pn}\|H_n(\overline{A}_{n,p,\kappa}-A_{n,p,\kappa})E_n\|\leq 4\,\sqrt{\frac{K\ln(2n^2)}{pn}}$$
with probability $\geq 1-1/2n^2$.

\subsubsection{Nearing the end of the argument}

We will show in \lemref{approx} below that there exists a universal $c>0$ such that for
any $\eps>0$
\begin{equation}\label{eq:goooooo}\Pr{\|\overline{T} - T_{\kappa}\|\leq 2\eps + c(L+K)(\ln
n/n)^{1/4}}\geq 1- \frac{1}{2n^2}.\end{equation} Increasing $c$ if
necessary, this implies that, with probability $\geq 1-n^{-2}$
$$\|T_{\sG_{n,p,\kappa}} - T_{\kappa}\|_{L^2\to L^2}\leq \|T_{\sG_{n,p,\kappa}} - \overline{T}\|_{L^2\to L^2}+ \|\overline{T}-T_\kappa\|_{L^2\to L^2}\leq \theta$$
for $\theta$ as in the Theorem. This proves the second assertion in
the Theorem. To prove the first one, first notice that, since
$E_nH_n=I_n$ (cf. \eqnref{identity}),
$$E_nT_{\sG_{n,p,\kappa}}H_n=\frac{1}{pn}\,(E_nH_n)A_{n,p,\kappa}(E_nH_n) = \frac{A_{n,p,\kappa}}{pn}.$$
Now use again the fact that $E_n$ and $H_n$ have norm $1$ to deduce:
$$\|\frac{A_{n,p,\kappa}}{pn} - E_nT_{\kappa}H_n\|\leq \|T_{\sG_{n,p,\kappa}} - T_{\kappa}\|_{L^2\to L^2}\leq \theta.$$
The other two assertions follow from the perturbation lemmas
provided in the Appendix. More precisely, recall from Claim
\ref{claim:puf} that the eigenvalues of $T_{\sG_{n,p,\kappa}}$ are
either $0$ or equal to some $\alpha\neq 0$ with $\alpha pn\in {\rm
spec}(A_{n,\kappa,p})$. Assertion $3$ follows from
\lemref{eigenvalues} applied to $T_{\sG_{n,p,\kappa}}$ and
$T_\kappa$.

As for Assertion $4$, we recall from Claim \ref{claim:puf} that
whenever $\beta pn\in{\rm spec}(A_{n,p,\kappa})$ with corresponding
eigenspace projection $\Pi_\beta$ the corresponding eigenspace of
$T_{\sG_{n,\kappa,p}}$ is $H_n\Pi_\beta E_n$. This implies that:
$$H_n \Pi_{(\alpha-\gamma)pn,(\alpha+\gamma)pn}(A_{n,p,\kappa}) E_n$$
is the projection onto the eigenspaces of $T_{\sG_{n,p,\kappa}}$ corresponding to eigenvalues between $\alpha-\gamma$ and $\alpha+\gamma$. One can apply \lemref{perturbation} with $\eps=\theta$ and $b-\gamma=a+\gamma=\alpha$ to deduce that, whenever $\alpha$ is as in assertion $4$ and $\|T_{\sG_{n,p,\kappa}}-T_\kappa\|\leq \theta$,
$$\|H_n \Pi_{(\alpha-\gamma)pn,(\alpha+\gamma)pn}(A_{n,p,\kappa}) E_n - P_\alpha\|_{L^2\to L^2}\leq \frac{4\theta}{\pi(\gamma-\theta)}.$$
Multiplying both operators above by $E_n$ on the left and by $H_n$
on the right, using that $H_n$ and $E_n$ have norm $\leq 1$ and that
$E_nH_n=I_n$, we see that:
$$\|\Pi_{(\alpha-\gamma)pn,(\alpha+\gamma)pn}(A_{n,p,\kappa}) - E_n P_\alpha H_n\|\leq \frac{4\theta}{\pi(\gamma-\theta)}.$$

This finishes the proof modulo inequality \eqnref{goooooo}, which is
the subject of \lemref{approx} below.

\subsubsection{Approximating $T_\kappa$}
\begin{lemma}\label{lem:approx}Under \assumref{kappa}, suppose $\eps>0$ is given and $\kappa_\eps:[0,1]^2\to \R_+\cup\{0\}$ is a $L$-Lipschitz symmetric function, with values between $0$ and $K$, such that
$$\int_{0}^1\int_{0}^1(\kappa(x,y)-\kappa_\eps(x,y))^2\,dx\,dy\leq \eps^2.$$
Then the following holds with probability $\geq 1/2n^2$:
$$\|T_\kappa - \overline{T}\|_{L^2\to L^2}\leq 2\eps + c(L+K)\,\left(\frac{\ln n}{n}\right)^{1/4},$$
where $c>0$ is universal.\end{lemma}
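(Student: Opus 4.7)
The plan is to interpose the Lipschitz approximant $\kappa_\eps$ between $\kappa$ and its random discretization. Let $\overline{T}_\eps$ denote the integral operator with kernel
\[
\sum_{1\leq i,j\leq n}\kappa_\eps(X_i,X_j)\,\Ind{\left(\tfrac{\sigma_n(i)-1}{n},\tfrac{\sigma_n(i)}{n}\right]\times\left(\tfrac{\sigma_n(j)-1}{n},\tfrac{\sigma_n(j)}{n}\right]},
\]
i.e.\ the analogue of $\overline{T}$ in \eqnref{Tbar} with $\kappa$ replaced by $\kappa_\eps$. By the triangle inequality
\[
\|T_\kappa-\overline{T}\|_{L^2\to L^2}\leq \|T_\kappa-T_{\kappa_\eps}\|_{L^2\to L^2}+\|T_{\kappa_\eps}-\overline{T}_\eps\|_{L^2\to L^2}+\|\overline{T}_\eps-\overline{T}\|_{L^2\to L^2},
\]
and I would bound the three summands separately. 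The first is immediate: \eqnref{normofTeta} gives $\|T_\kappa-T_{\kappa_\eps}\|_{L^2\to L^2}\leq \|\kappa-\kappa_\eps\|_{L^2([0,1]^2)}\leq \eps$, deterministically.

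For the second term I would invoke the Dvoretzky--Kiefer--Wolfowitz inequality for the uniform sample to get, with probability at least $1-1/(4n^2)$, that $\max_{1\leq k\leq n}|\overline{X}_k-k/n|\leq c_1\sqrt{\ln n/n}$ for a universal $c_1$. On that event, for every $x\in((k-1)/n,k/n]$ one has $|x-\overline{X}_k|\leq 1/n+c_1\sqrt{\ln n/n}$, and the Lipschitz assumption together with \eqnref{normofTeta} yields
\[
\|T_{\kappa_\eps}-\overline{T}_\eps\|_{L^2\to L^2}^{2}\leq \int_{0}^{1}\!\!\int_{0}^{1} L^2\bigl(|x-\overline{X}_{\lceil xn\rceil}|+|y-\overline{X}_{\lceil yn\rceil}|\bigr)^{2}\,dx\,dy\leq c_2 L^2\,\tfrac{\ln n}{n}.
\]

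For the third term, the kernel of $\overline{T}_\eps-\overline{T}$ is a piecewise constant function whose squared $L^2$ norm equals $n^{-2}\sum_{i,j}(\kappa-\kappa_\eps)^2(X_i,X_j)$. The diagonal piece contributes at most $4K^{2}/n$ since $|\kappa-\kappa_\eps|\leq 2K$. The off-diagonal sum $S\equiv\sum_{i\neq j}(\kappa-\kappa_\eps)^2(X_i,X_j)$ is a U-statistic with kernel bounded by $4K^{2}$, whose mean is at most $n(n-1)\eps^{2}$ by Fubini. Replacing one $X_i$ alters $S$ by at most $8K^{2}(n-1)$, so McDiarmid's bounded-differences inequality gives $S\leq n(n-1)\eps^{2}+c_3 K^{2}n^{3/2}\sqrt{\ln n}$ with probability at least $1-1/(4n^2)$. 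Dividing by $n^{2}$, taking a square root, and using $\sqrt{a+b}\leq\sqrt{a}+\sqrt{b}$ produces $\|\overline{T}_\eps-\overline{T}\|_{L^2\to L^2}\leq \eps+c_4 K(\ln n/n)^{1/4}$.

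Intersecting the two favourable events (total failure probability at most $1/(2n^{2})$) and combining the three estimates, the bound $\sqrt{\ln n/n}\leq (\ln n/n)^{1/4}$ absorbs the Lipschitz contribution into the $K$-dependent one, yielding $2\eps+c(L+K)(\ln n/n)^{1/4}$ after redefining the constant. The main obstacle is the third step: one must extract a $1/n^{2}$-level tail bound for a U-statistic whose only useful information is its mean $\leq \eps^{2}$, and this squaring is precisely what forces the slow $(\ln n/n)^{1/4}$ rate rather than the $\sqrt{\ln n/n}$ one gets for the Lipschitz piece.
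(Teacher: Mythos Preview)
Your proposal is correct and follows essentially the same route as the paper: the same three-term decomposition via $\kappa_\eps$, the Hilbert--Schmidt bound \eqnref{normofTeta} for the deterministic term, DKW plus Lipschitz continuity for $\|T_{\kappa_\eps}-\overline{T}_\eps\|$, and a bounded-differences argument (the paper calls it Azuma, you call it McDiarmid) on $n^{-2}\sum_{i,j}(\kappa-\kappa_\eps)^2(X_i,X_j)$ for the remaining piece. The only cosmetic difference is that you split off the diagonal explicitly, whereas the paper absorbs it into the concentration step.
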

\begin{proof}Define:
$$\widehat{T}\equiv \sum_{1\leq i,j\leq n}\kappa_\eps(X_i,X_j)\Ind{\left(\frac{\sigma_n(i)-1}{n},\frac{\sigma_n(i)}{n}\right]\times \left(\frac{\sigma_n(j)-1}{n},\frac{\sigma_n(j)}{n}\right]}.$$
We will bound:
\begin{equation}\label{eq:threeparts}\|T_\kappa - \overline{T}\|_{L^2\to L^2}\leq \|T_\kappa - T_{\kappa_\eps}\|_{L^2\to L^2} + \|\overline{T}-\widehat{T}\|_{L^2\to L^2} + \|T_{\kappa_\eps}-\widehat{T}\|_{L^2\to L^2}.\end{equation}

By the results in \secref{integralop}, one can bound the first term in the RHS by:
$$\|T_\kappa - T_{\kappa_\eps}\|^2_{L^2\to L^2} = \|T_{\kappa-\kappa_\eps}\|^2_{L^2\to L^2}\leq \int_{0}^1\int_{0}^1(\kappa(x,y)-\kappa_\eps(x,y))^2\,dx\,dy\leq \eps^2.$$
For the second term, we observe that $\overline{T}-\widehat{T}$ is
of the form $T_\eta$ for $\eta$ taking the values
$\kappa(X_i,X_j)-\kappa_\eps(X_i,X_j)$ on squares of area $1/n^2$.
We deduce from the results in \secref{integralop} that:
\begin{equation}\label{eq:secondterm}\|\overline{T} - \widehat{T}\|^2_{L^2\to L^2}\leq \frac{1}{n^2}\sum_{i,j=1}^n(\kappa(X_i,X_j)-\kappa_\eps(X_i,X_j))^2.\end{equation}
The expected value of the RHS is:
$$\int_{0}^1\int_{0}^1(\kappa(x,y)-\kappa_\eps(x,y))^2\,dx\,dy\leq \eps^2$$
Moreover, the random variables $X_i$ are independent and replacing
$X_i$ by some other $X_i'\in[0,1]$ can change the value of the sum
in the RHS of \eqnref{secondterm} by at most $K^2/n$ (as each term
is bounded by $K$ and only $n$ terms involve $X_i$). Azuma's
inequality \cite{AlonSpencer_ProbMethod} implies:
$$\Pr{\frac{1}{n^2}\sum_{i,j=1}^n(\kappa(X_i,X_j)-\kappa_\eps(X_i,X_j))^2\geq \eps + t}\leq e^{-nt^2/2K^4}.$$
Therefore, with probability $\geq 1-1/4n^2$ we have:
$$\|\overline{T} - \widehat{T}\|_{L^2\to L^2}\leq \sqrt{\eps^2 + K^2\sqrt{\frac{2\ln(4n^2)}{n}}}\leq \eps + c\,K\left(\frac{\ln n}{n}\right)^{1/4},$$
where $c>0$ is some universal constant. We deduce:
\begin{equation}\label{eq:firsttwoterms}\|T_\kappa - T_{\kappa_\eps}\|_{L^2\to L^2} + \|\overline{T}-\widehat{T}\|_{L^2\to L^2}\leq 2\eps + c\,K\left(\frac{\ln n}{n}\right)^{1/4}\end{equation}
with probability $\geq 1-1/4n^2$.

To finish the proof, we must bound the third term in \eqnref{threeparts}. To do this, we notice that:
$$\widehat{T} -T_{\kappa_\eps}=T_\eta$$
where
$$\eta\equiv \sum_{1\leq i,j\leq n}\left(\kappa_\eps(X_i,X_j) - \kappa_\eps\right)\Ind{\left(\frac{\sigma_n(i)-1}{n},\frac{\sigma_n(i)}{n}\right]\times \left(\frac{\sigma_n(j)-1}{n},\frac{\sigma_n(j)}{n}\right]}.$$
Using the definition of $\sigma_n$ from \secref{preciseresult}, one
can rewrite this as:
$$\eta(x,y)=\sum_{1\leq i,j\leq n}\left(\kappa_\eps(\overline{X}_i,\overline{X}_j) - \kappa_\eps(x,y)\right)\Ind{\left(\frac{i-1}{n},\frac{i}{n}\right]\times \left(\frac{j-1}{n},\frac{j}{n}\right]}(x,y).$$
Recall that $\kappa_\eps$ is $L_\eps$-Lipschitz and therefore,
\begin{multline*}\forall (x,y)\in \left(\frac{i-1}{n},\frac{i}{n}\right]\times \left(\frac{j-1}{n},\frac{j}{n}\right],
|\kappa_\eps(\overline{X}_i,\overline{X}_j) - \kappa_\eps(x,y)|\leq
\\ \leq 2L_\eps/n + |\kappa_\eps(\overline{X}_i,\overline{X}_j) -
\kappa_\eps(i/n,j/n)|\leq \\ \leq 2L_\eps/n +
L_\eps|\overline{X}_i-i/n| +
L_\eps|\overline{X}_j-j/n|.\end{multline*} Integrating $\eta^2$, we
find that:
\begin{multline*}\int_{[0,1]^2}\eta^2\leq \frac{1}{n^2}\sum_{i,j=1}^n (2L_\eps/n + L_\eps|\overline{X}_i-i/n| + L_\eps|\overline{X}_j-j/n|)^2\leq \\ \mbox{[use $(a+b+c^2\leq 3(a^2 +b^2+c^2))$]} \leq \frac{12L_\eps^2}{n^2} + 6L_\eps^2\max_{1\leq i\leq n}(\overline{X}_i-i/n)^2.\end{multline*}
A simple calculation using e.g. Massart's version of the
Dvoretsky-Kiefer-Wolfowitz inequality
\cite{Massart_DvoretskyKieferWolfowitz} reveals that the last term
is $\leq c^2\ln n/n$ ($c>0$ universal) with probability $\geq
1-1/4n^2$. We deduce that:
$$\|\widehat{T} -T_{\kappa_\eps}\|_{L^2\to L^2}=\|T_\eta\|_{L^2\to L^2}\leq \sqrt{\int_{[0,1]^2}\eta^2}\leq \frac{2\sqrt{3}L}{n} + cL\sqrt{\frac{6\ln n}{n}}$$
with probability $\geq 1-1/4n^2$. Combining this with \eqnref{firsttwoterms} and replacing $c>0$ with a larger universal constant if necessary finishes the proof.\end{proof}

\section{Freedman's inequality for matrix
martingales}\label{sec:freedman}

In this Section we prove our new concentration inequality,
\thmref{freedman}. We begin with some preliminaries from matrix
analysis.

\subsection{Preliminaries from matrix analysis}\label{sec:prelim}

\subsubsection{The positive semi-definite order}

Matrix inequalities for the positive semi-definite order will be
essential in our proof.

Given $A\in \C^{d\times d}_{\rm Herm}$, say that $A\succeq 0$ if $A$
is positive semi-definite, which is the same as saying that all
eigenvalues of $A$ are non-negative, or that $v^*Av\geq 0$ for all
$v\in\C^d$. We will also write $A\preceq B$ (for $B\in\C^{d\times
d}_{\rm Herm}$) if $B-A\succeq 0$. Notice that $A\preceq \xi I$ for
some $\xi\in\R$ iff $\lambda_{\max}(A)\leq \xi$.

We will need four other properties of the partial order
``$\preceq$". The first three are easily checked and we omit their
proofs:
\begin{equation}\label{eq:closed}\mbox{The set }\{(A,B)\in \left(\C^{d\times d}_{\rm Herm}\right)^2\,:\, A\preceq B\}\mbox{ is closed in the product topology.}\end{equation}
\begin{equation}\label{eq:linearorder}\forall \{A_i\}_{i=1}^k, \{B_i\}_{i=1}^k\subset \C^{d\times d}_{\rm Herm}:\, ``\forall 1\leq i\leq k,\, A_i\preceq B_i"\Rightarrow ``\sum_{i=1}^kA_i \preceq \sum_{i=1}^k B_i".\end{equation}
\begin{equation}\label{eq:firsteigen}\forall A,B\in \C^{d\times d}_{\rm Herm}:\, ``A\succeq 0"\Rightarrow ``\lambda_{\max}(A+B)\geq \lambda_{\max}(B)".\end{equation}

The fourth one is slightly less standard.
\begin{equation}\label{eq:increasing}\forall A,B,C\in \C^{d\times d}_{\rm Herm},\, (A\succeq 0\wedge  C-B\succeq 0)\Rightarrow \tr(AB)\leq \tr(AC).\end{equation}

To prove \eqnref{increasing}, notice that for for $A,B,C$ as above,
$$\tr(A(C-B)) = \tr((C-B)^{1/2}A(C-B)^{1/2})$$ where $(C-B)^{1/2}\in\C^{d\times d}_{\rm Herm}$
is the (also positive semi-definite) square root of $C-B$. Then
notice that for any $v\in \C^n$,
$$v^*(C-B)^{1/2}A(C-B)^{1/2}v = [(C-B)^{1/2}v]^*A[(C-B)^{1/2}v]\geq
0$$ since $A\succeq 0$. This implies that $(C-B)^{1/2}A(C-B)^{1/2}$
must be positive semi-definite, hence its trace is non-negative:
$\tr(A(C-B))\geq 0$, which is equivalent to \eqnref{increasing} by
linearity.
\subsubsection{Conditional expectations are monotone}
We will also need the following property that relates expectations to the positive semi-definite order. Let $X,Y$ be integrable, random $d\times d$ Hermitian matrices defined on a common probability space $(\Omega,\sF,\Prwo)$. Then:
\begin{equation}\label{eq:exisincreasing}\mbox{If }X\preceq Y\mbox{ almost surely, then }\Ex{X\mid\sG}\preceq\Ex{Y\mid \sG}\mbox{ almost surely}.\end{equation}
To see this, it suffices to see that for all $v\in\C^d$, $v^*Xv\leq  v^*Yv$ and therefore $\Ex{v^*Xv\mid\sG}\leq \Ex{v^*Yv\mid \sG}$. However, our definition of $\Ex{\cdot\mid\sG}$ for matrices (cf. \secref{prelimprob}) implies that $\Ex{v^*Xv\mid\sG}=v^*\Ex{X\mid\sG}v$ and $\Ex{v^*Yv\mid\sG}=v^*\Ex{Y\mid\sG}v$. Therefore, if
$$X\preceq Y\mbox{ almost surely }\Rightarrow \forall v\in\C^d,\, ``v^*\Ex{X\mid\sG}v\leq v^*\Ex{Y\mid\sG}v \mbox{ almost surely}".$$
Now let $Q\subset \C^d$ be dense and countable. Note that for all $A\in\C^{d\times d}_{\rm Herm}$, $A\preceq 0$ if and only if $v^*Av\geq 0$ for all $v\in Q$.
$$\Ex{X\mid\sG}\preceq \Ex{Y\mid\sG}\mbox{ a.s.}\Leftrightarrow \Pr{\forall v\in Q,\, v^*\Ex{X\mid\sG}v\leq v^*\Ex{Y\mid\sG}v}=1$$
and the RHS follows from $X\preceq Y$ by the previous implication (since $Q$ is countable).

\subsubsection{Matrix functions and matrix exponentials}

If $f:\C\to\C$ given by a power series $f(x) =
\sum_{i=1}^{\infty}c_ix^i$ that converges for all $x\in\C$, one may
define:
$$f(A)\equiv \sum_{i=1}^{\infty} c_i A^i, A\in\C^{d\times d},$$
which can be shown to converge for all $A$. $f(A)$ is Hermitian
whenever $A\in\C^{d\times d}_{\rm Herm}$ and the coefficients $c_i$
belong to $\R$. In that case, the eigenvalues of $f(A)$ are given by
$f(\lambda_i(A))$ for $0\leq i\leq d-1$, with the same eigenvectors
as $A$. In particular, $f(A)\preceq \xi I$ for some $\xi\in\R$ iff
$f(\lambda_i(A))\leq \xi$ for each $0\leq i\leq d-1$. Moreover, for
all $s\geq 0$,
\begin{equation}\label{eq:traceandbound}\exp(s\lambda_{\max}(A)) =
\lambda_{\max}(\exp(sA))\leq \tr(\exp(sA)).\end{equation}

We need one more result from matrix analysis, called the {\em Golden
Thompson inequality}.

\begin{equation}\label{eq:goldenthompson}\forall d\in\{1,2,3,\dots\},\,\forall A,B\in\C^{d\times d}_{\rm Herm}:\, \tr(e^{A+B})\leq \tr(e^{A}e^{B}).\end{equation}

This inequality is fundamental in adapting the standard proofs of
concentration to the matrix setting
\cite{AhlswedeWinter_StrongConverse,ChristofidesMarkstrom_HoeffdingForMatrices,Eu_Rudelson}.

\subsection{The proof}

We begin with two simple Lemmas.

\begin{lemma}For any matrix $C\in\C^{d\times d}_{\rm Herm}$ and $k\in\N\backslash\{0,1\}$, $C^k\preceq \|C\|_2^{k-2} C^2$.\end{lemma}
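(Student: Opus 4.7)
The plan is to reduce to a pointwise inequality via the spectral theorem. Since $C$ is Hermitian, write $C = \sum_{i=0}^{d-1}\lambda_i(C)\,\psi_i\psi_i^*$ with orthonormal eigenvectors $\psi_i$. Because $C^k$ and $C^2$ are both polynomials in $C$, they are simultaneously diagonalizable in this basis, so
\[
\|C\|_2^{k-2}C^2 - C^k \;=\; \sum_{i=0}^{d-1}\bigl(\|C\|_2^{k-2}\,\lambda_i(C)^2 - \lambda_i(C)^k\bigr)\,\psi_i\psi_i^*.
\]
Thus $C^k \preceq \|C\|_2^{k-2}C^2$ will follow once I show that every coefficient on the right is nonnegative.

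So the task reduces to verifying, for each real eigenvalue $\lambda = \lambda_i(C)$, the scalar inequality
\[
\lambda^k \;\leq\; \|C\|_2^{k-2}\,\lambda^2.
\]
If $\lambda = 0$ this is trivial. Otherwise $\lambda^2 > 0$, and I would split into two cases. If $k$ is even, then $\lambda^k = |\lambda|^{k-2}\lambda^2 \leq \|C\|_2^{k-2}\lambda^2$, since $|\lambda|\leq \|C\|_2$. If $k$ is odd and $\lambda > 0$, the same computation with $|\lambda|=\lambda$ works. If $k$ is odd and $\lambda < 0$, then $\lambda^k < 0 \leq \|C\|_2^{k-2}\lambda^2$ and the inequality is immediate.

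There is no real obstacle here: the whole argument is a one-line diagonalization combined with a trivial scalar comparison. The only thing to be mildly careful about is the case $\lambda < 0$ with $k$ odd, where one should not try to write $\lambda^{k-2} \leq \|C\|_2^{k-2}$ (which can fail in sign); handling this case separately via $\lambda^k < 0$ avoids any difficulty.
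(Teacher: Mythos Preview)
Your argument is correct and follows essentially the same approach as the paper: diagonalize via the spectral theorem and verify the scalar inequality $\|C\|_2^{k-2}\lambda_i^2 - \lambda_i^k \geq 0$ for each eigenvalue. The paper writes this as $(\|C\|_2^{k-2} - \lambda_i^{k-2})\lambda_i^2 \geq 0$ and dispatches it in one line, whereas you handle the sign/parity cases more explicitly, but the content is identical.
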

\begin{proof}$\|C\|_2^{k-2} C^2 - C^k$ has the same eigenvectors as $C$ and its eigenvalues are given by $$\|C\|_2^{k-2}\lambda_i(C)^2 - \lambda_i(C)^{k}= (\|C\|_2^{k-2} - \lambda_i(C)^{k-2})\lambda_i(C)^2.$$ This is always $\geq 0$ because $\|C\|_2=\max_{1\leq i\leq d}|\lambda_i(C)|$.\end{proof}

\begin{lemma}\label{lem:boundexponential}For any matrix $C\in\C^{d\times d}_{\rm Herm}$ with $\|C\|_2\leq 1$, $e^{C}\preceq I + C + C^2$.\end{lemma}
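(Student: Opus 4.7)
The plan is to reduce this matrix inequality to the scalar inequality $e^x \le 1+x+x^2$ on $[-1,1]$, via the spectral theorem. Since $C\in\C^{d\times d}_{\rm Herm}$ with $\|C\|_2\le 1$, every eigenvalue $\lambda_i(C)$ lies in $[-1,1]$. Both $I+C+C^2$ (polynomial in $C$) and $e^C$ (analytic function of $C$) are simultaneously diagonalizable with $C$, so using the spectral decomposition $C=\sum_{i=0}^{d-1} \lambda_i\,\psi_i\psi_i^*$, I would write
\[
I + C + C^2 - e^C \;=\; \sum_{i=0}^{d-1}\bigl(1+\lambda_i+\lambda_i^2 - e^{\lambda_i}\bigr)\,\psi_i\psi_i^*,
\]
which is positive semidefinite iff each scalar coefficient is nonnegative. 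So the whole matrix assertion collapses to the scalar inequality $e^x\le 1+x+x^2$ for $x\in[-1,1]$.

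The scalar inequality I would dispatch by a short case analysis on $[-1,0]$ and $[0,1]$. Set $g(x)\equiv 1+x+x^2-e^x$; then $g(0)=g'(0)=0$. On $[-1,0]$, $g''(x)=2-e^x>0$, so $g'$ is strictly increasing up to $g'(0)=0$, hence nonpositive on $[-1,0]$, so $g$ decreases toward $g(0)=0$ and $g\ge 0$ there. On $[0,1]$, a crude Taylor bound using $x^k\le x^2$ for $k\ge 3$ gives
\[
e^x - 1 - x - x^2 \;=\; -\tfrac{x^2}{2} \;+\; \sum_{k\ge 3}\frac{x^k}{k!} \;\le\; x^2\bigl(e - 3\bigr) \;\le\; 0,
\]
since $e<3$.

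I expect no real obstacle; this is purely a scalar-to-matrix lift that is entirely handled by the spectral theorem. The reason the lemma is placed here is that, inside the proof of \thmref{freedman}, it allows one to dominate matrix moment generating functions $e^{sX}$ by the quadratic expression $I+sX+s^2X^2$ (whenever $\|sX\|_2\le 1$), which then feeds into the Ahlswede--Winter / Golden--Thompson trace-exponential machinery via the monotonicity properties \eqnref{exisincreasing} and \eqnref{increasing} and the Golden--Thompson inequality \eqnref{goldenthompson}.
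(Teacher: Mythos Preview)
Your proof is correct. Both your argument and the paper's reduce the matrix statement to a scalar inequality via the spectral theorem, but they organize the reduction differently. The paper first proves an auxiliary lemma $C^k\preceq \|C\|_2^{k-2}C^2$ (itself an eigenvalue argument), then applies it term-by-term to the Taylor partial sums using the order properties \eqnref{linearorder} and \eqnref{closed} to pass to the limit; this amounts to the scalar bound $\sum_{i\ge 2}x^i/i!\le (\sum_{i\ge 2}1/i!)\,x^2\le x^2$ on $[-1,1]$. You instead diagonalize once and prove $e^x\le 1+x+x^2$ on $[-1,1]$ directly, splitting at $0$. Your route is slightly more self-contained (no separate lemma, no limit argument), while the paper's route showcases the positive-semidefinite-order toolkit \eqnref{linearorder}, \eqnref{closed} that is reused elsewhere in \secref{freedman}. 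Mathematically the two are equivalent, and your remark about how the lemma feeds into the Ahlswede--Winter machinery is accurate.
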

\begin{proof}The previous lemma implies that $C^i\preceq
C^2$ for all $i\geq 2$. Property \eqnref{linearorder} of
``$\preceq$" implies that for any $k$,
$$I + C + \sum_{i=2}^{k} \frac{C^i}{i!}\preceq I + C +
\left(\sum_{i=2}^k\frac{1}{i!}\right)C^2\preceq I + C + C^2.$$ Now
let $k\nearrow +\infty$ and use \eqnref{closed}.\end{proof}

The next step is an exponential inequality for martingales.

\begin{lemma}[Exponential inequality for martingales]\label{lem:freedman}Let
$Z_n$, $W_n$ be as in \thmref{freedman} with $M=1$. Then for all
$s\in [0,1/2]$ and all deterministic $C\in\C^{d\times d}_{\rm
Herm}$,
$$\Ex{\tr\left[\exp\left(sZ_n  - 2s^2 W_n+ C\right)\right]}\leq\tr\left[\exp\left(C\right)\right].$$\end{lemma}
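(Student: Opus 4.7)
The plan is to induct on $n$, with the base case $n=0$ trivial since $Z_0=W_0=0$ makes both sides equal $\tr(e^C)$.

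For the inductive step, I would set $\Delta_n=Z_n-Z_{n-1}$ and $Y_n=W_n-W_{n-1}=\Ex{\Delta_n^2\mid\sF_{n-1}}$, and decompose the exponent as $sZ_n-2s^2W_n+C=A+B$, where $A:=sZ_{n-1}-2s^2W_{n-1}+C$ is $\sF_{n-1}$-measurable and $B:=s\Delta_n-2s^2Y_n$ is the ``increment part''. Applying Golden--Thompson (\eqnref{goldenthompson}) gives $\tr(e^{A+B})\leq\tr(e^Ae^B)$, and since $e^A$ is $\sF_{n-1}$-measurable, taking the conditional expectation (using the conditioning rule \eqnref{product} and the commutation of trace and expectation \eqnref{traceexp}) yields
$$\Ex{\tr(e^{A+B})\mid\sF_{n-1}}\leq\tr\!\left(e^A\,\Ex{e^B\mid\sF_{n-1}}\right).$$

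The heart of the argument is the claim $\Ex{e^B\mid\sF_{n-1}}\preceq I$ almost surely. To prove it, I would first observe that the hypothesis $M=1$ gives $\|\Delta_n\|\leq 1$ almost surely, so $\Delta_n^2\preceq I$ and hence $Y_n\preceq I$ by \eqnref{exisincreasing}. Consequently, for $s\in[0,1/2]$,
$$\|B\|\leq s\|\Delta_n\|+2s^2\|Y_n\|\leq s+2s^2\leq 1,$$
so Lemma \ref{lem:boundexponential} applies to give $e^B\preceq I+B+B^2$. Taking conditional expectation and using that conditional expectations preserve $\preceq$ (\eqnref{exisincreasing}), one gets
$$\Ex{e^B\mid\sF_{n-1}}\preceq I+\Ex{B\mid\sF_{n-1}}+\Ex{B^2\mid\sF_{n-1}}.$$
Since $\Ex{\Delta_n\mid\sF_{n-1}}=0$ and $Y_n$ is $\sF_{n-1}$-measurable, the cross terms $\Delta_n Y_n$ and $Y_n\Delta_n$ have zero conditional expectation, so a direct computation gives $\Ex{B\mid\sF_{n-1}}=-2s^2Y_n$ and $\Ex{B^2\mid\sF_{n-1}}=s^2Y_n+4s^4Y_n^2$. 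Thus
$$\Ex{e^B\mid\sF_{n-1}}\preceq I-s^2Y_n+4s^4Y_n^2 = I-s^2Y_n(I-4s^2Y_n).$$
Since $Y_n\succeq 0$ has spectrum contained in $[0,1]$ and $4s^2\leq 1$ when $s\leq 1/2$, the matrix $s^2Y_n(I-4s^2Y_n)$ is positive semi-definite, giving $\Ex{e^B\mid\sF_{n-1}}\preceq I$ as claimed.

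Finally, property \eqnref{increasing} applied to $e^A\succeq 0$ and $I-\Ex{e^B\mid\sF_{n-1}}\succeq 0$ gives $\tr(e^A\,\Ex{e^B\mid\sF_{n-1}})\leq\tr(e^A)$. Combining everything:
$$\Ex{\tr(e^{sZ_n-2s^2W_n+C})\mid\sF_{n-1}}\leq \tr(e^{sZ_{n-1}-2s^2W_{n-1}+C}).$$
Taking outer expectation and invoking the inductive hypothesis (with the same deterministic $C$) closes the induction. The delicate point, and the reason the statement requires the specific numerical constants it does, is that the coefficient $2$ in $2s^2W_n$ is calibrated exactly so that the leading $-2s^2Y_n$ from $\Ex{B\mid\sF_{n-1}}$ dominates both the second-order remainder $s^2Y_n$ from $\Ex{B^2\mid\sF_{n-1}}$ and the stray fourth-order term $4s^4Y_n^2$ once $s\leq 1/2$. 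A smaller coefficient (e.g.\ $s^2W_n$) would not leave enough room for this cancellation, and it is precisely this $2s^2$ in the lemma that propagates to the $8\sigma^2$ in the final Theorem \ref{thm:freedman}.
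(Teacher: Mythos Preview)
Your proof is correct and follows essentially the same route as the paper's own argument: induction on $n$, Golden--Thompson to split off the increment, the key claim $\Ex{e^{B}\mid\sF_{n-1}}\preceq I$ proved via Lemma~\ref{lem:boundexponential} and the spectral bound on $Y_n$, and then monotonicity of the trace \eqnref{increasing} to close the step. The only differences are notational (the paper writes $X_n$ for your $\Delta_n$ and $\Delta_n$ for your $Y_n$) and cosmetic in how the norm bound $\|B\|\leq 1$ is phrased.
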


\begin{proof}Set $X_n\equiv Z_n-Z_{n-1}$ and $\Delta_n\equiv \Ex{X_n^2\mid\sF_{n-1}}$. We use Golden Thompson \eqnref{goldenthompson} to deduce that:
$$\tr(e^{sZ_n - 2s^2 W_n + C})\leq \tr(e^{sX_n - 2s^2 \Delta_n}e^{sZ_{n-1} - 2s^2 W_{n-1} + C}).$$
Taking conditional expectations, we see that:
\begin{eqnarray*}\Ex{\tr(e^{sZ_n - s^2 W_n + C})\mid\sF_{n-1}}&\leq&
\Ex{\tr(e^{sDX_n - 2s^2\Delta_n}e^{sZ_{n-1} - 2s^2 W_{n-1} +
C})\mid\sF_{n-1}} \\ &=& \tr(\Ex{e^{sX_n -
2s^2\Delta_n}\mid\sF_{n-1}}\,e^{sZ_{n-1} - 2s^2 W_{n-1} +
C}).\end{eqnarray*} Here the equality is a result of $\tr$ and
expected values commuting \eqnref{traceexp}, as well as noting that
$e^{sX_{n-1} - 2s^2 W_{n-1} + C}$ is $\sF_{n-1}$-measurable and then
applying \eqnref{product} to the conditional expectation.

We now make the following claim.\begin{claim}$\Ex{e^{sX_n -
2s^2\Delta_n}\mid\sF_{n-1}}\preceq I$.\end{claim} This will imply
(via monotonicity of the trace \eqnref{increasing}) that:
$$\Ex{\tr(e^{sZ_n - 2s^2 W_n + C})\mid \sF_{n-1}} \leq \tr(e^{sZ_{n-1} - 2s^2 W_{n-1} +
C}),$$ hence
$$\Ex{\tr(e^{sZ_n - 2s^2 W_n + C})} \leq \Ex{\tr(e^{sZ_{n-1} - 2s^2 W_{n-1} +
C})}$$ and the Lemma follows from this via induction in $n$.

To prove the claim, we first note that for $|s|\leq 1/2$,
$$\|sX_n - 2s^2\Delta_n\|_2\leq \frac{\|X_n\|_2 + \|\Delta_n\|_2}{2}\leq 1$$
by the assumption that $\|X_n\|_2\leq 1$. We now apply
\lemref{boundexponential} with $C = sX_n - s^2\Delta_n$ and the monotonicity of conditional expectations \eqnref{increasing} to obtain:
$$\Ex{e^{sX_n -
2s^2\Delta_n}\mid\sF_{n-1}} \preceq \Ex{I + sX_n - 2s^2 \Delta_n +
s^2 X_n^2 -2s^3 X_n\Delta_n - 2s^3X_n\Delta_n+ 4s^4\Delta_n^2\mid
\sF_{n-1}}.$$

$\Delta_n=\Ex{X_n^2\mid\sF_{n-1}}$ is $\sF_{n-1}$-measurable and the
martingale property implies $\Ex{X_n\mid\sF_{n-1}}=0$. Via equation
\eqnref{product}, this implies $\Ex{\Delta_nX_n\mid
\sF_{n-1}}=\Ex{X_n\Delta_n\mid \sF_{n-1}}=0$ almost surely. This
means that the RHS above is a.s. equal to:
$$I - s^2 \Delta_n + 4s^4\Delta_n^2.$$
Now notice that the eigenvalues of $- s^2 \Delta_n + 4s^4\Delta_n^2$
are given by:
$$- s^2 \lambda_i(\Delta_n) + 4s^4\lambda_i(\Delta_n)^2, 1\leq i\leq d.$$
The inequality $s\leq 1/2$ implies $4s^4\leq s^2$. Moreover, each
$\lambda_i(\Delta_n)$ is between $0$ and $1$, since
$\|\Delta_n\|\leq 1$ and $\Delta_n\succeq 0$ (it is the conditional
expectation of $X_n^2$). This implies that the above expression is
at most:
$$- s^2 \lambda_i(\Delta_n) + s^2\lambda_i(\Delta_n)=0$$
for each $i$. Therefore, $- s^2 \Delta_n + 4s^4\Delta_n^2\preceq 0$
and (again using the monotonicity property \eqnref{exisincreasing}),
$\Ex{e^{sX_n - 2s^2\Delta_n}\mid\sF_{n-1}} \preceq I$ almost
surely.\end{proof}

\begin{proof}[of \thmref{freedman}] One may assume that $M=1$ (one can always rescale $Z_n$ so that this is the case; the bound behaves accordingly). If $\lambda_{\max}(W_n)\leq \sigma^2$,
$\sigma^2 I - W_n\succeq 0$ is positive semi-definite. Inequality
\eqnref{firsteigen} then implies that for all $s>0$,
$$\lambda_{\max}(sX_n + 2s^2\sigma^2 I - 2s^2W_n)\geq \lambda_{\max}(sX_n) =
s\lambda_{\max}(X_n).$$ Therefore, \begin{eqnarray*}\forall s>0,\,
\Pr{\lambda_{\max}(X_n)\geq t,\, \lambda_{\max}(W_n)\leq \sigma^2} &\leq&
\Pr{\lambda_{\max}(sX_n + 2s^2\sigma^2I - 2s^2W_n)\geq st} \\ &\leq&
e^{-st}\Ex{\exp(\lambda_{\max}(sX_n + 2s^2\sigma^2I -
2s^2W_n))}.\end{eqnarray*} We now use the inequality
``$e^{\lambda_{\max}(sZ)}\leq \tr(e^{sZ})$", valid for any $s\geq 0$ and
$Z\in\C^{d\times d}_{\rm Herm}$ (cf. \eqnref{traceandbound}),
together with the exponential inequality in \lemref{freedman}, to
deduce that for all $s\in [0,1/2]$,
\begin{eqnarray*}\Pr{\lambda_{\max}(sX_n + 2s^2\sigma^2I - 2s^2W_n)\geq
st} &\leq& e^{-st}\Ex{\tr(\exp(sX_n + 2s^2\sigma^2I -
2s^2W_n))}\\ &\leq& \tr(\exp(2s^2 \sigma^2 I))e^{-st} = d\,
e^{2s^2\sigma^2 - st}.\end{eqnarray*} Set
$$s\equiv \frac{t}{4\sigma^2 + 2t}.$$
Notice that with this choice $s\leq 1/2$ always. Moreover,
$$2s^2\sigma^2 = \frac{t^2}{8\sigma^2(1+t/2\sigma^2)^2}\leq
\frac{t^2}{8\sigma^2(1+t/2\sigma^2)} = \frac{st}{2}.$$ Hence:
$$\Pr{\lambda_{\max}(X_n)\geq t,\, \lambda_{\max}(W_n)\leq \sigma^2} \leq d\,e^{-\frac{t^2}{8\sigma^2 +
4t}},$$ as desired.\end{proof}

\begin{remark}\label{rem:indepcase}It is well-known in the scalar case that inequalities for martingales imply inequalities for independent sums. The same is true in the matrix setting. Let $X_1,\dots,X_n$ be mean-zero independent random matrices, defined on a common probability space $(\Omega,\sF,\Prwo)$, with values in $\C^{d\times d}_{\rm Herm}$ and such that there exists a $M>0$ with $\|X_i\|\leq M$ almost surely for all $1\leq i\leq m$. Letting $\sF_0=\{\emptyset,\Omega\}$ and $\sF_i=\sigma(X_1,\dots,X_i)$ ($i\in[n]$), one can see that:
$$\{(Z_i\equiv \sum_{j=1}^iX_j, \sF_i)\}_{i=0}^n$$
is a martingale satisfying the assumptions of the Theorem and that, moreover, $W_n$ is deterministic in this case:
$$W_n\equiv \sum_{i=1}^n \Ex{(Z_i-Z_{i-1})^2\mid\sF_{i-1}} = \sum_{i=1}^n\Ex{X_i^2}.$$
Thus one may take:
$$\sigma^2 = \lambda_{\max}\left(\sum_{i=1}^n\Ex{X_i^2}\right)$$
in \thmref{freedman} and deduce the first half of the Corollary
below. The other half comes from considering
$-\sum_{i=1}^nX_i$.\end{remark}

\begin{corollary}\label{cor:freedman}Let $X_1,\dots,X_n$ be mean-zero independent random matrices, defined on a common probability space $(\Omega,\sF,\Prwo)$, with values in $\C^{d\times d}_{\rm Herm}$ and such that there exists a $M>0$ with $\|X_i\|\leq M$ almost surely for all $1\leq i\leq m$. Define:
$$\sigma^2\equiv \lambda_{\max}\left(\sum_{i=1}^n\Ex{X_i^2}\right).$$
Then for all $t\geq 0$,
$$\Pr{\lambda_{\max}\left(\sum_{i=1}^n X_i\right)\geq t}\leq d\, e^{-\frac{t^2}{8\sigma^2 + 4Mt}},$$
and
$$\Pr{\left\|\sum_{i=1}^n X_i\right\|\geq t}\leq 2d\, e^{-\frac{t^2}{8\sigma^2 + 4Mt}}.$$\end{corollary}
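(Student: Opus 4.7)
The plan is to deduce Corollary \ref{cor:freedman} directly from Theorem \ref{thm:freedman} by casting the independent sum as a martingale, essentially as sketched in Remark \ref{rem:indepcase}. Set $\sF_0=\{\emptyset,\Omega\}$, $\sF_i=\sigma(X_1,\dots,X_i)$ for $1\leq i\leq n$, and $Z_i\equiv \sum_{j=1}^i X_j$ with $Z_0=0$. Each $Z_i$ is $\sF_i$-measurable, and since $X_i$ is independent of $\sF_{i-1}$ with mean zero, the relation $\Ex{Z_i\mid \sF_{i-1}}=Z_{i-1}+\Ex{X_i\mid\sF_{i-1}}=Z_{i-1}$ shows $(Z_i)$ is a matrix martingale. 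The increment bound $\|Z_i-Z_{i-1}\|=\|X_i\|\leq M$ holds by hypothesis.

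The key simplification in the independent case is that the quadratic variation $W_n$ is deterministic. Indeed, because $X_i$ is independent of $\sF_{i-1}$, one has $\Ex{(Z_i-Z_{i-1})^2\mid\sF_{i-1}}=\Ex{X_i^2\mid\sF_{i-1}}=\Ex{X_i^2}$, hence
$$W_n=\sum_{i=1}^n \Ex{X_i^2},$$
and $\lambda_{\max}(W_n)$ equals the constant $\sigma^2$ in the corollary's statement. Therefore the event $\{\lambda_{\max}(W_n)\leq \sigma^2\}$ has probability one, and Theorem \ref{thm:freedman} gives
$$\Pr{\lambda_{\max}\Bigl(\sum_{i=1}^n X_i\Bigr)\geq t}=\Pr{\lambda_{\max}(Z_n)\geq t,\,\lambda_{\max}(W_n)\leq \sigma^2}\leq d\,e^{-\frac{t^2}{8\sigma^2+4Mt}},$$
which is the first asserted inequality.

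For the spectral norm bound, I would apply the same reasoning to the matrices $-X_1,\dots,-X_n$, which are also independent, mean-zero, $\sF_i$-measurable, and satisfy $\|{-X_i}\|=\|X_i\|\leq M$. Since $(-X_i)^2=X_i^2$, the corresponding quadratic variation is unchanged and the same $\sigma^2$ works. Thus
$$\Pr{\lambda_{\max}\Bigl({-\sum_{i=1}^n X_i}\Bigr)\geq t}\leq d\,e^{-\frac{t^2}{8\sigma^2+4Mt}}.$$
Since $\|\sum X_i\|=\max\{\lambda_{\max}(\sum X_i),\lambda_{\max}(-\sum X_i)\}$ for Hermitian matrices, a union bound produces the claimed factor of $2d$ in the second inequality.

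There is essentially no obstacle: the only things to check are (i) that the martingale hypotheses of Theorem \ref{thm:freedman} are met, which is immediate from independence and the uniform norm bound, and (ii) that the conditional variance collapses to the deterministic sum $\sum\Ex{X_i^2}$, which lets us drop the $\{\lambda_{\max}(W_n)\leq\sigma^2\}$ qualifier. The second (spectral norm) half is a standard symmetrization trick via $X_i\mapsto -X_i$ combined with a union bound.
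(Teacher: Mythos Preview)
Your proposal is correct and follows essentially the same approach as the paper: the paper's argument (given in \remref{indepcase}) likewise sets up the natural filtration, observes that $W_n=\sum_i\Ex{X_i^2}$ is deterministic so that $\sigma^2=\lambda_{\max}(W_n)$ may be taken in \thmref{freedman}, and then obtains the spectral norm bound by applying the same estimate to $-\sum_i X_i$ and using a union bound.
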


\section{Final remarks}\label{sec:final}

\noindent {\em Sharpness of \thmref{freedman}.} One can show that
\thmref{freedman} is close to sharp and that, in particular, the $d$
factor in the bound is necessary for general martingale sequences.
To see this, consider a sum $Z_n$ of $n$ independent, identically
distributed $d\times d$ diagonal random matrices $X_1,\dots,X_n$
whose diagonal entries are independent, unbiased $\pm 1$. The
largest eigenvalue of $Z_n$ is a maximum of $d$ independent random
sums, each with $n$ terms of the kind $\pm 1$ above. One can see
that for large $n$ and $d$ and for $t\approx \sqrt{n\ln d}$,
$$\Pr{\lambda_{\max}(Z_n)\geq t}\geq d e^{-(1+\liloh{1})t^2/2n}$$
which is what \corref{freedman} gives up to the constants in the
exponent.

An interesting question is to understand the circumstances under which one can remove the $d$ factor from the bound. For instance, can the sharper results of \cite{FurediKomlos_Eigenvalues,FeigeOfek_Spectral} be reobtained via some variant of \thmref{freedman}?\\

\noindent {\em Other applications of \thmref{freedman}.} In a
related paper (in preparation) we show how \thmref{freedman} can be
used to show concentration of the matrices of random lifts of large
graphs. A pleasing corollary of our result is this: consider a
random $k_1k_2$-lift of a large graph $G$ with minimum degree
$\omega(\ln (k_1k_2n))$. The Laplacian of this lift is essentially
indistinguishable from that of the (in principle very different)
random graph obtained by performing a $k_1$-lift on $G$ and then a
$k_2$-lift on the resulting graph.

It would be interesting to see other applications of
\thmref{freedman}, especially in settings where the
Christofides-M\"{a}rkstrom bound is useless because its variance
term is too large (cf. \remref{comparisons}).\\

\noindent {\em The Laplacian of inhomogeneous random graphs.} The
results of the \secref{inhomogeneous} can be extended to the
Laplacian $\sL_{n,p,\kappa}$ of $G_{n,p,\kappa}$. More precisely,
add the following condition to \assumref{kappa}: that there exists a
$K_->0$ such that for all $x\in[0,1]$, $\kappa(x)\equiv
\int_0^1\kappa(x,y)\,dy\geq K_-$. Then there is a close
correspondence between $\sL_{n,p,\kappa}$ and the operator
$S_\xi\equiv {\rm Id}_{L^2} - T_{\xi}$, where ${\rm Id}_{L^2}$ is
the identity operator on $L^2([0,1])$ and $T_\xi$ is the integral
operator given by the symmetric, non-negative function:
$$\xi(\cdot,\cdot\cdot) =
\frac{\kappa(\cdot,\cdot\cdot)}{\sqrt{\kappa(\cdot)\kappa(\cdot\cdot)}}.$$

That is, if $p\leq 1/K$ and $pnK_-\gg C\ln n$ for some $C$, we will
have:
$$\|\sL_{n,p,\kappa}-E_nS_{\xi}H_n\|=\liloh{1} \mbox{ and
}\|H_n\sL_{n,p,\kappa}E_n-S_{\xi}\|=\liloh{1},$$ with consequences
for the spectrum and eigenspaces of $\sL_{n,p,\kappa}$. We omit the
details.\\

\noindent {\em Better bounds and extensions?} We have mentioned the
results on spectral gaps in references \cite{FeigeOfek_Spectral} and
\cite{CojaLanka_SpectrumGivenDegrees}, on $G_{n,p}$ and random
graphs with given expected degrees. These papers actually do much
more than we described, as they show that, even is very sparse
graphs, there is a large ``core" set of vertices so that the
matrices of the induced subgraph are well-behaved. It would be an
interesting question to prove a similar result either for more
general instances of bond percolation or inhomonegeous random
graphs.\\

\noindent {\em Cut convergence, eigenvalues and eigenvectors.} It is
not clear to the author what one can/cannot prove about eigenvectors
and eigenvalues of sparse graphs while only assuming that they
converge to a given $\kappa$ in the cut norm. Ideally, one would wish to be able to prove that this suffices for the convergence of the given operators, at least under suitable assumptions, but it is not clear how one should proceed.

\appendix \section{Appendix: two perturbation results}
The following functional-analytic perturbation results are needed in
the main text. In what follows $\sH$ is a real Hilbert
space and $\|\cdot\|$ denotes both the Hilbert space norm and the
induced norm on linear operators. Undefined notions and quoted
results can be found in any textbook on Functional Analysis, eg.
\cite{RieszSzNagy_FunctionalAnalysis,Rudin_FunctionalAnalysis}.
\begin{lemma}\label{lem:eigenvalues}Suppose $V,W$ are compact Hermitian linear operators on the Hilbert space $\sH$ that satisfy $\|V-W\|\leq \eps$. Let ${\rm spec}(V)$, ${\rm spec(W)}$ denote the spectra of $V$ and $W$ (respectively). Let $S\subset\R$ be such that $\inf_{s\in S}|s|>\eps$ and let $m_V(S)$ be the sum of the multiplicities of all elements of ${\rm spec}(V)\cap S$. Then:
$$m_V(S)\leq m_W(S^\eps)$$
where for $A\subset \R$, $A^{\eps}\equiv \{x\in \R\,:\, \exists a\in A,\, |x-a|\leq \eps\}$.\end{lemma}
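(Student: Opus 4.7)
The plan is to reduce the inequality $m_V(S)\leq m_W(S^\eps)$ to the classical Weyl-type perturbation bound for eigenvalues of compact self-adjoint operators. A preliminary observation is that since $\inf_{s\in S}|s|>\eps$, both $S$ and $S^\eps$ are bounded away from $0$, so ${\rm spec}(V)\cap S$ and ${\rm spec}(W)\cap S^\eps$ consist of finitely many nonzero eigenvalues of finite multiplicity; consequently $m_V(S)$ and $m_W(S^\eps)$ are finite, which is what lets us hope to compare them as integers.

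First I would handle the positive and negative eigenvalues separately. Enumerate the positive eigenvalues of $V$ with multiplicity in decreasing order as $\lambda_1^V\geq\lambda_2^V\geq\cdots>0$ (padding with zeros if the list is finite), and do the same for $W$, giving $\lambda_k^W$. Via the Courant--Fischer min-max characterization
$$\lambda_k^V=\sup_{L\subset\sH,\ \dim L=k}\ \inf_{f\in L,\ \|f\|=1}\langle Vf,f\rangle,$$
which returns $0$ whenever $V$ has fewer than $k$ positive eigenvalues (since then every $k$-dimensional subspace $L$ contains a unit vector orthogonal to all positive eigenvectors of $V$, forcing a nonpositive Rayleigh quotient), together with the trivial bound $|\langle Vf,f\rangle - \langle Wf,f\rangle|\leq\|V-W\|\leq\eps$, one obtains the Weyl inequality $|\lambda_k^V-\lambda_k^W|\leq\eps$ for every $k\geq 1$. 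Applying the same argument to $-V$ and $-W$ yields the analogous bound for the negative eigenvalues listed in increasing order.

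Writing $S_+=S\cap(0,\infty)$ and $S_-=S\cap(-\infty,0)$, set $I_+=\{k\geq 1 : \lambda_k^V\in S_+\}$, so that $|I_+|=m_V(S_+)$. For each $k\in I_+$ we have $\lambda_k^V>\eps$ and $|\lambda_k^V-\lambda_k^W|\leq\eps$, hence $\lambda_k^W>0$ and $\lambda_k^W\in S^\eps$; counting with multiplicity this gives $m_V(S_+)\leq m_W(S^\eps\cap(0,\infty))$. The symmetric argument yields $m_V(S_-)\leq m_W(S^\eps\cap(-\infty,0))$. Since $\inf_{s\in S}|s|>\eps$ forces $0\notin S^\eps$, adding the two bounds produces $m_V(S)=m_V(S_+)+m_V(S_-)\leq m_W(S^\eps)$, as desired. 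The only genuine subtlety is the bookkeeping when one of $V,W$ runs out of positive (or negative) eigenvalues before the other, but the padding-with-zeros convention above makes the min-max identity and the ensuing Weyl bound uniformly valid, so no real obstacle arises.
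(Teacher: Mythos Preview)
Your argument is correct. The min--max characterization does hold for compact self-adjoint operators in the form you use it, and the key implication you need---that if $\lambda_k^V>\eps$ then $\lambda_k^W$ exists as a genuine positive eigenvalue with $|\lambda_k^V-\lambda_k^W|\leq\eps$---follows cleanly from the fact that $M_k(T):=\sup_{\dim L=k}\inf_{f\in L,\|f\|=1}\langle Tf,f\rangle$ is $1$-Lipschitz in $T$ and equals $\lambda_k^T$ whenever it is positive, while it is nonpositive otherwise. The bookkeeping with $I_+$ and $I_-$ is sound, and the hypothesis $\inf_{s\in S}|s|>\eps$ is used exactly where it should be, to ensure the matched eigenvalues $\lambda_k^W$ stay on the correct side of zero and that $0\notin S^\eps$.

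The paper takes a different route: it first observes that the finite-rank case follows from the matrix Weyl inequality \eqnref{eigenperturbation} (by restricting to the span of the two ranges), and then passes to the general compact case by truncating $V$ and $W$ to finite-rank operators $V_\delta,W_\delta$ (discarding eigenvalues in $(-\delta,\delta)$), applying the finite-rank result with error $\eps+2\delta$, and letting $\delta\searrow 0$. Your approach is more direct, proving the Weyl-type bound once and for all at the level of compact operators via Courant--Fischer, and then handling the positive and negative spectra separately; this sidesteps the approximation-and-limit step entirely. The paper's approach has the minor expository advantage of reducing to a statement about matrices that the reader has already seen, at the cost of a slightly longer argument.
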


\begin{proof}This is evident if both $V$ and $W$ have finite-dimensional rank. In this case one may restrict to the span of the two ranges, which is a finite-dimensional space isomorphic to some $\R^d$, and then apply \eqnref{eigenperturbation}. [Do notice that $0$ might belong to the spectrum of the restriction of $V$ or $W$ to the finite-dimensional subspace, even though it does not belong to the original spectra. This, however, will not matter, due to the condition $\inf_{s\in S}|s|>\eps$.]

For the case of infinite-dimensional rank, $V$ and $W$ are the limit
(in the operator norm) of operators of finite-dimensional rank. More
specifically, recall from \secref{integralop} that the spectral theorem for compact, self-adjoint operators states that $V$ can be written as a sum:
$$V = \sum_{\alpha\in{\rm spec}(V)}\alpha P_\alpha$$
where the $P_{\alpha}$ are orthogonal projectors of orthogonal
ranges, with finite rank if $\alpha\neq 0$. Moreover, for any
$\delta>0$, ${\rm spec}(V)\backslash(-\delta,\delta)$ is finite.
Therefore, the finite-rank operator:
$$V_\delta = \sum_{\alpha\in{\rm spec}(V)\backslash(-\delta,\delta)}\alpha P_\alpha$$
satisfies $\|V_\delta-V\|\leq \delta$. One may similarly define  $W_\delta$ with $\|W_\delta-W\|\leq \delta$ and it follows that $\|V_\delta-W_\delta\|\leq \eps+2\delta$. Moreover, we have the simple fact:
\begin{equation}\label{eq:A}\forall A\subset \R\backslash[-\delta,\delta],\, m_{V_\delta}(A)=m_{V}(A)\mbox{ and }m_{W_\delta}(A)=m_W(A).\end{equation}

Let $\delta>0$ be small, so that $\inf_{s\in S}|s|>\eps+3\delta$.  The finite-dimensional result implies:
$$m_{V_\delta}(S)\leq m_{W_\delta}(S^{\eps+2\delta}).$$
Notice that $m_{V_\delta}(S)=m_V(S)$ because $S\subset \R\backslash[-\eps,\eps]\subset \R\backslash[-\delta,\delta]$ and therefore \eqnref{A} applies. Moreover, $\forall x\in S^{\eps+2\delta}$,
$$|x|\geq \inf_{s\in S} |s| - \eps - 2\delta >\delta$$
by the choice of $\delta$; therefore $S^{\eps+2\delta}\subset \R\backslash[-\delta,\delta]$ and we can apply \eqnref{A} again to deduce that $m_{W_\delta}(S^{\eps+2\delta})=m_{W}(S^{\eps+2\delta})$. These facts imply:
$$m_{V}(S)\leq m_{W}(S^{\eps+2\delta}).$$
It is an exercise to show that $m_{W}(S^{\eps+2\delta})\to m_W(S^\eps)$ when $\delta\searrow 0$. This finishes the proof.\end{proof}

\begin{lemma}\label{lem:perturbation}Suppose $V,W$ are compact Hermitian linear operators on the Hilbert space $\sH$ that satisfy $\|V-W\|\leq \eps$. Assume that $a<b$and $\gamma>\eps$ be such that
$a+\gamma<b-\gamma$ and $V$ does not contain any eigenvalues in
$(a-\gamma,a+\gamma)\cup (b-\gamma,b+\gamma)$. Define $\Pi_{a,b}(V)$
as the projector onto the span of the eigenvectors of $V$
corresponding to $a\leq \lambda_k(V)\leq b$ and define
$\Pi_{a,b}(W)$ similarly. Then:
$$\|\Pi_{a,b}(V) - \Pi_{a,b}(W)\|\leq \frac{(b-a +2\gamma)\,\eps}{\pi(\gamma^2-\gamma\eps)}.$$\end{lemma}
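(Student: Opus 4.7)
The plan is to represent both projections by the Riesz (contour integral) formula around a rectangular contour $\Gamma$ in $\C$ that separates the portions of ${\rm spec}(V)$ and ${\rm spec}(W)$ lying in $[a,b]$ from the rest of their spectra, and then estimate the difference via the resolvent identity.

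Concretely, I would work in the complexification of $\sH$ (compact self-adjoint operators extend to compact Hermitian operators there with identical spectra) and let $\Gamma\subset\C$ be the positively oriented boundary of the rectangle with corners $a\pm i\gamma,\,b\pm i\gamma$. The hypothesis that $V$ has no eigenvalues in $(a-\gamma,a+\gamma)\cup(b-\gamma,b+\gamma)$ means every point of ${\rm spec}(V)$ lies at Euclidean distance at least $\gamma$ from $\Gamma$, so $\Gamma$ avoids ${\rm spec}(V)$ and
\[
\sup_{z\in\Gamma}\,\|(zI-V)^{-1}\|\le \frac{1}{\gamma}.
\]
Since $\|V-W\|\le\eps<\gamma$, writing $zI-W=(zI-V)\bigl(I-(zI-V)^{-1}(W-V)\bigr)$ and using the geometric Neumann series gives
\[
\sup_{z\in\Gamma}\,\|(zI-W)^{-1}\|\le \frac{1}{\gamma-\eps},
\]
so $\Gamma$ also avoids ${\rm spec}(W)$. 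By the spectral theorem for compact self-adjoint operators recalled in \secref{integralop}, the Riesz projections
\[
P_V\equiv\frac{1}{2\pi i}\oint_\Gamma (zI-V)^{-1}\,dz,\qquad P_W\equiv\frac{1}{2\pi i}\oint_\Gamma (zI-W)^{-1}\,dz
\]
are the sums of the finite-rank eigenprojectors of $V$ and $W$ corresponding to the eigenvalues enclosed by $\Gamma$. The eigenvalues of $V$ enclosed by $\Gamma$ are exactly those in $[a,b]$ (by the hypothesis on the gaps), and by \lemref{eigenvalues} applied to $S=[a,b]\cap{\rm spec}(V)$ together with the fact that eigenvalues of $W$ lie within $\eps<\gamma$ of eigenvalues of $V$, the eigenvalues of $W$ enclosed by $\Gamma$ are exactly those in $[a,b]$. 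Thus $P_V=\Pi_{a,b}(V)$ and $P_W=\Pi_{a,b}(W)$.

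Now apply the second resolvent identity
\[
(zI-V)^{-1}-(zI-W)^{-1}=(zI-V)^{-1}(V-W)(zI-W)^{-1},
\]
which together with the two resolvent bounds gives
\[
\sup_{z\in\Gamma}\,\bigl\|(zI-V)^{-1}-(zI-W)^{-1}\bigr\|\le \frac{\eps}{\gamma(\gamma-\eps)}.
\]
The rectangle $\Gamma$ has perimeter $2(b-a)+4\gamma=2(b-a+2\gamma)$, so estimating the contour integral yields
\[
\|\Pi_{a,b}(V)-\Pi_{a,b}(W)\|\le \frac{1}{2\pi}\cdot 2(b-a+2\gamma)\cdot\frac{\eps}{\gamma(\gamma-\eps)}=\frac{(b-a+2\gamma)\,\eps}{\pi(\gamma^2-\gamma\eps)},
\]
which is exactly the claimed bound.

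The main subtlety, and the step I would be most careful about, is the identification of the Riesz projection $P_W$ with $\Pi_{a,b}(W)$: one must know that no eigenvalue of $W$ has drifted across $\{a,b\}$ from outside $[a,b]$ or inside. This follows because any eigenvalue of $W$ is within $\eps<\gamma$ of some eigenvalue of $V$, and the gap hypothesis forces eigenvalues of $V$ that lie in $[a,b]$ to sit in $[a+\gamma,b-\gamma]$ while all others lie outside $[a-\gamma,b+\gamma]$; perturbations of size $<\gamma$ therefore cannot cross the endpoints $a,b$. The rest of the argument is routine resolvent estimation.
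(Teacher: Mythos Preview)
Your proof is correct and follows essentially the same approach as the paper: represent both projections as Riesz contour integrals around the rectangle with corners $a\pm i\gamma,\,b\pm i\gamma$, bound the resolvent difference on the contour, and multiply by the perimeter. Your version is slightly more streamlined in two respects---you work directly in the (complexified) Hilbert space rather than first treating the finite-rank case and then approximating, and you invoke the second resolvent identity in one line where the paper writes out the Neumann series; both routes yield the same resolvent bound $\eps/(\gamma(\gamma-\eps))$ and hence the same final estimate.
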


\begin{proof}Suppose first that $\sH$ is
finite-dimensional, in which case one may assume that $\sH=\C^d$ for
some $d$ and that $V$ and $W$ are matrices. In this case we use a
standard technique involving contour integration in the complex
plane and the resolvent of linear operators \cite[Chapter
2]{Kato_PerturbationTheory}.

Let $\sC$ be the rectangular contour in the complex plane that
passes through the points $a+\gamma \sqrt{-1}$, $a-\gamma
\sqrt{-1}$, $b-\gamma \sqrt{-1}$, $b+\gamma \sqrt{-1}$ in
counterclockwise order. The Cauchy formula implies that for all
$\lambda\in\R\backslash\{a,b\}$,
$$\frac{1}{2\pi\sqrt{-1}}\int_{\sC} \frac{dz}{z-\lambda} = \left\{\begin{array}{ll} 1, & a<\lambda<b\\ 0,&\mbox{otherwise.}\end{array}\right.$$
Now consider the {\em resolvent}:
$$R_V(z)\equiv (zI-V)^{-1},\, z\in\C\backslash\{\lambda_i(V)\,:\,0\leq i\leq n-1\}.$$
The spectral theorem implies that:
$$R_V(z) = \sum_{k=0}^{d-1}\,\frac{\psi_{k,V}\psi_{k,V}^*}{z-\lambda_k(V)}.$$
where $\psi_{k,V}$ is the eigenvector of $V$ corresponding to
$\lambda_k(V)$. By assumption, $V$ has no eigenvalues on $\sC$,
therefore:
$$\frac{1}{2\pi\sqrt{-1}}\int_\sC R_V(z)\,dz = \sum_{k=0}^{d-1} \frac{1}{2\pi\sqrt{-1}}\int_\sC \frac{\psi_{k,V}\psi_{k,V}^*}{z-\lambda_k(V)}\,dz = \sum_{k:\lambda_k(V)\in [a,b]} \psi_{k,V}\psi_{k,V}^* = \Pi_{a,b}(V).$$
Now define the resolvent $R_W(z) = (zI - W)^{-1}$. Recall that
$|\lambda_i(V)-\lambda_i(W)|\leq \eps<\gamma$ by
\eqnref{eigenperturbation} and that no eigenvalue of $V$ lies in
$(a-\gamma,a+\gamma)\cup (b-\gamma,b+\gamma)$ (by assumption). This
implies that no eigenvalue of $W$ can lie on $a$ or $b$. Therefore,
the same reasoning used above implies that:
$$\frac{1}{2\pi\sqrt{-1}}\int_\sC R_W(z)\,dz = \Pi_{a,b}(W).$$
In particular,
$$\|\Pi_{a,b}(V) - \Pi_{a,b}(W)\| = \left\|\frac{1}{2\pi\sqrt{-1}}\int_\sC (R_V(z)-R_W(z))\,dz\right\|.$$
It is not hard to show that:
$$\left\|\frac{1}{2\pi\sqrt{-1}}\int_\sC (R_V(z)-R_W(z))\,dz\right\|\leq \frac{1}{2\pi}\int_\sC \|R_V(z)-R_W(z)\|\,d|z|.$$
Since $\sC$ has length $2(b-a)+4\gamma$, we have:
\begin{equation}\label{eq:differenceresolvent}\|\Pi_{a,b}(V) - \Pi_{a,b}(W)\| \leq \frac{(b-a
+2\gamma)}{\pi}\,\max_{z\in \sC}\|R_V(z) - R_W(z)\|.\end{equation} We
now bound the difference between the resolvents. Recall that for
$T\in\C^{d\times d}$ with $\|T\|<1$,
$$(I+T)^{-1} = \sum_{n\geq 0}T^n.$$
Suppose we can show that $\|(W-V)R_V(z)\|\leq \alpha<1$ for $z\in
\sC$. Then:
\begin{eqnarray*}\|R_W(z) -R_V(z)\| &=& \|((zI - V) - (W-V))^{-1} - R_V(z)\|\\ &=&\|(zI - V)^{-1}\,(I - (W-V)(zI -
V)^{-1})^{-1}-R_V(z)\|\\
&=& \|R_V(z)\,\{(I - (W-V)R_V(z))^{-1}-I\}\|\\ &=& \|\sum_{n\geq 1}
R_V(z)[(W-V)\,R_V(z)]^{n}\| \\&\leq &\|R_V(z)\| \sum_{n\geq 1}
\|(W-V)\,R_V(z)\|^{n}\\ & \leq &
\|R_V(z)\|\,\frac{\alpha}{1-\alpha}.\end{eqnarray*}

But in our case we have:
$$\|R_V(z)\| = \left\|\sum_{k=0}^{d-1}\,\frac{\psi_{k,V}\psi_{k,V}^*}{z-\lambda_k(V)}\right\| = \max_{k}|z-\lambda_k(V)|^{-1} \leq 1/\gamma$$
because all $\lambda_k(V)$ lie within distance $\geq \gamma$ from
the contour $\sC$ (this follows from the assumption that no
$\lambda_k(V)$ is in $(a-\gamma,a+\gamma)\cup (b-\gamma,b+\gamma)$).
Moreover, $\|W-V\|\leq \eps$ by assumption. Therefore,
$\|(W-V)R_V(z)\|\leq \eps/\gamma<1$ and, by the above,
$$\|R_W(z) -R_V(z)\|\leq \frac{\eps}{\gamma^2 - \gamma\eps}.$$
Together with \eqnref{differenceresolvent}, this finishes the proof
for the finite-dimensional case.

We now consider the case of arbitrary $\sH$. Recall the definitions of $V_\delta$ and $W_\delta$ from the previous proof. It is easy to deduce
from the definition of $V_\delta$ that for any $v\in\C^d$,
$$\Pi_{a,b}(V)\,v = \lim_{\delta\searrow 0}\Pi_{a,b}(V_\delta)$$ and similarly
$$\Pi_{a,b}(W)\,v = \lim_{\delta\searrow 0}\Pi_{a,b}(W_\delta)\, v\mbox{
where }W_\delta\equiv \sum_{i:|\lambda_i|\geq
\delta}\eta_i\,\psi_{i,W}\psi_{i,W}^*.$$ Since $V_\delta$ and
$W_\delta$ have finite dimensional rank, one sees from the first
part that for all small enough $\delta>0$,
$$\|\left(\Pi_{a,b}(V_\delta)-\Pi_{a,b}(W_\delta)\right)\,v\|\leq
\|v\|\,\|\Pi_{a,b}(V_\delta)-\Pi_{a,b}(W_\delta)\|\leq \frac{(b-a
+\gamma)\,(\eps+2\delta)}{\pi(\gamma^2-\gamma(\eps+2\delta))}$$ since
$\|V_\delta - W_\delta\|\leq \eps+2\delta< \gamma$. Letting $\delta\searrow 0$ implies:
$$\|\left(\Pi_{a,b}(V)-\Pi_{a,b}(W)\right)\,v\|\leq
\|v\|\, \frac{(b-a +\gamma)\,\eps}{\pi(\gamma^2-\gamma\eps)}$$ and
since $v$ is arbitrary this finishes the proof.\end{proof}

\end{document}